\documentclass[11pt]{article}

\usepackage{amsmath,amsthm,amssymb,mathrsfs} 
\usepackage{times}
\usepackage[colorlinks,citecolor=red,urlcolor=blue,pagebackref,hypertexnames=false]{hyperref}
\usepackage{fancybox,fancyhdr,graphics,epsfig}
\usepackage[usenames,dvipsnames]{color}
\usepackage{datetime2}
\usepackage[utf8]{inputenc}
%\usepackage{bbm,subfigure}
%\usepackage{verbatim}

%\RequirePackage[colorlinks,citecolor=blue,urlcolor=blue]{hyperref}
%\usepackage[marginal]{showlabels} %to shows labels, final—turns off all the package's functionality
% Should go after hyperref
%\renewcommand{\showlabelfont}{\tiny}

\oddsidemargin -0.3in    % originally 0.22
\evensidemargin -0.3in   % originally 0.22
\topmargin=-.19in \textwidth=6.9in \textheight=8.8in
\parindent=5pt
\parskip=8pt

\def\bL{\mathbf{L}}

\def\cF{\mathcal{F}}
\def\cL{\mathcal{L}}
\def\bM{\mathbf{M}}
\def\bN{\mathbf{N}}

\def\bK{\mathbf{K}}
\def\dkr{\mathbf{d_{KR}}}
\def\dTV{\mathbf{d_{TV}}}
\def\txi{\tilde{\xi}}
\def\no{\nonumber}

\def\cLip{\textsc{Lip}}
\usepackage{amsmath,amsthm,bbm,amssymb}
\usepackage[dvipsnames]{xcolor}

\DeclareMathOperator{\dist}{dist}

\DeclareMathOperator{\vol}{Vol}
\DeclareMathOperator{\logg}{\log\log}

\def\N{\mathbb{N}}

\def\R{\mathbb{R}}

\def\Z{\mathbb{Z}}
\def\P{\mathbb{P}}

\def\T{\mathbb{T}}
\def\X{\mathbb{X}}
\def\Y{\mathbb{Y}}

\def\cF{\mathcal{F}}

\def\cL{\mathcal{L}}

\def\cN{\mathcal{N}}

\def\cS{\mathcal{S}}

\def\cX{\mathcal{X}}
\def\cY{\mathcal{Y}}

\def\cL{\mathcal{L}}

\newcommand{\E}{\mathbb{E}} %expectation
 %expectation

%\newcommand{\cov}{\mathrm{cov}}
\newcommand{\given}{\;|\;}
\newcommand{\mean}[1] {\E\left\{{#1}\right\}}

\newcommand{\meanx}[1] {\E\{{#1}\}}
\newcommand{\cmean}[2] {\E\left\{#1\given #2\right\}}

\newcommand{\ind}{\boldsymbol{\mathbbm{1}}} % indicator function
 % indicator

\newcommand{\var}[1]{\mathrm{Var}\param{{#1}}}
\newcommand{\varx}[1]{\mathrm{Var}({#1})}

%-------- Parentheses

\newcommand{\set}[1]{\left\{#1\right\}}
\newcommand{\sbrk}[1]{\left[#1\right]}

\newcommand{\param}[1]{\left(#1\right)}
\newcommand{\abs}[1] {\left| {#1}\right|}

\newcommand{\prob}[1]{\mathbb{P}\left(#1\right)}

 %conditional probability

%\newcommand{\pois}{P\!oisson}

\newcommand{\tg}{\tilde{g}}
\newcommand{\eps}{\epsilon}
\newcommand{\bz}{\mathbf{z}}
\newcommand{\by}{\mathbf{y}}
\newcommand{\bx}{\mathbf{x}}

%i\newcommand{\cQ}{{\cal{Q}}}

%\def\bth{\mbox{\boldmath$\theta$}}

\providecommand{\setthms}[1]{#1}
\setthms{
\newtheorem{lem}{Lemma}[section]
\newtheorem{thm}[lem]{Theorem}
\newtheorem{prop}[lem]{Proposition}

\newtheorem{rem}[lem]{Remark}

\theoremstyle{definition}

}

\newcommand{\ninf}{n\to\infty}

\newcommand{\limninf}{\lim_{\ninf}}

\numberwithin{equation}{section}

%\newcommand{\Tr}{{\rm Tr}}

% Calculus

\def\bsplit#1\esplit{\begin{split} #1 \end{split} }
\def\splitb#1\splite{\begin{split} #1 \end{split} }
\def\beq#1\eeq{\begin{equation} #1 \end{equation}}
\def\eqb#1\eqe{\begin{equation} #1 \end{equation}}

\linespread{1.25}

\def\md{\mathrm{d}}

\newcommand{\remove}[1]{}

    \newcommand{\footremember}[2]{%
    \footnote{#2}
    \newcounter{#1}
    \setcounter{#1}{\value{footnote}}%
    }
    
\title{Poisson process approximation under stabilization and Palm coupling} 
\author{%
O. Bobrowski \footremember{ob}{Viterbi Faculty of Electrical and Computer Engineering, Technion - Israel Institute of Technology, omer@ee.technion.ac.il.} %
\and M. Schulte \footremember{ms}{Institute of Mathematics, Hamburg University of Technology, matthias.schulte@tuhh.de.} %
\and D. Yogeshwaran \footremember{dy}{Theoretical Statistics and Mathematics Unit,  Indian Statistical Institute,  Bangalore.  d.yogesh@isibang.ac.in}  
}
\date{\today}

\begin{document}

\maketitle
\renewcommand{\baselinestretch}{1.1}\normalsize

\begin{abstract}
We present new Poisson process approximation results for stabilizing functionals of Poisson and binomial point processes. These functionals are allowed to have an unbounded range of interaction and encompass many examples in stochastic geometry.  Our bounds are derived for the Kantorovich-Rubinstein distance using the generator approach to Stein's method. We give different types of bounds for different point processes. While some of our bounds are given in terms of coupling of the point process with its Palm version, the others are in terms of the local dependence structure formalized via the notion of  stabilization. We provide two supporting examples for our new framework -- one is for  Morse critical points of the distance function, and the other is for large $k$-nearest neighbor balls.  Our bounds considerably extend the results in Barbour and Brown (1992),  Decreusefond,  Schulte and Th\"{a}le (2016) and Otto (2020).
\end{abstract}
\noindent
{\bf Keywords:} Functional limit theorems, {Poisson process approximation,} Kantorovich-Rubinstein distance,  Point processes, Stein's method, Glauber dynamics,  Palm coupling,  Stabilizing statistics,  $k$-nearest neighbor balls,  Morse critical points, {Binomial point processes.}  \\ \\
\vspace{0.1cm}
\noindent
{\bf Mathematics Subject Classification 2020:} 60D05, %  Geometric probability and stochastic geometry
60F17, %Functional limit theorems; invariance principles 
60G55, % Point processes
60H07, % Stochastic calculus of variations and the Malliavin calculus 
60J76. %Jump processes on general state spaces 

\section{Introduction} 

Poisson processes arise in many fields of probability theory and are arguably the most prominent class of point processes. At the same time they are very convenient to work with since they exhibit complete spatial independence (i.e., point configurations within disjoint sets are independent). Therefore, approximating an involved point process by a Poisson process is an important problem. This is also the crucial step in establishing convergence in distribution to a Poisson process. This paper focuses on Poisson process approximation for a large class of point processes that often arise in stochastic geometry. The processes we study are functionals of Poisson (or binomial) point processes, which are themselves not Poisson, and in particular lack spatial independence.  The results we present here significantly generalize recent ones \cite{decreusefond_functional_2016,Otto2020} by either considering a stronger approximation distance or more general functionals.  Our approach is based on Stein's method for Poisson process approximation; see e.g.\ \cite{Barbour1988,Barbour1992,Barbour1992book,Chen2004,decreusefond_functional_2016,
Xia2005}. Now we shall offer a quick preview of our setup,  results and some background literature.

Let $\nu$ be either a Poisson process or a binomial point process (i.e., a collection of $n$ i.i.d.\ points) in some space $\mathbb{X}$. We denote by $\mathbf{N}_{\mathbb{X}}$ the set of $\sigma$-finite point configurations on $\mathbb{X}$. For $k\in\N$, a second space $\mathbb{Y}$ and measurable functions $f: \mathbb{X}^k\times \mathbf{N}_{\mathbb{X}}\to \mathbb{Y}$ and $g: \mathbb{X}^k\times \mathbf{N}_{\mathbb{X}}\to \{0,1\}$ that are symmetric in the first $k$ arguments, we consider the following point process on $\mathbb{Y}$,
\begin{equation}\label{e:Definition_Point_Process}
\xi {:=} \xi[\nu] {:=} \frac{1}{k!}\sum_{\bx \in \nu^k_{\neq}} g(\bx,\nu) \delta_{f(\bx,\nu)},
\end{equation}
where $\nu^k_{\neq}$ denotes the set of all $k$-tuples of distinct points of $\nu$, and $\delta_y$ stands for the Dirac measure concentrated at $y\in\mathbb{Y}$.  
Point processes of the form \eqref{e:Definition_Point_Process} often arise in stochastic geometry (cf. \cite{Otto2020,PianoforteSchulte2021}).  For example, consider the case $\mathbb{X}=\mathbb{R}^d$, $\mathbb{Y}=\mathbb{R}$, $k=1$, $g \equiv 1$ and $f(x,\nu)$ being the minimal distance from $x$ to another point of $\nu$. The point process $\xi$ is then the collection of all nearest neighbor distances of $\nu$.  

Our main results deal with the comparison of $\xi$ with a Poisson process $\zeta$ on $\mathbb{Y}$ with a finite intensity measure. We consider the Kantorovich-Rubinstein distance between the distributions of the two processes $\xi$ and $\zeta$, defined as
\begin{equation}
\label{d:dkr}
\dkr(\xi,\zeta) {:=} \sup_{h \in \operatorname{LIP}(\mathbb{Y})} |\mean{h(\xi)} - \mean{h(\zeta)}|,
\end{equation}
where $\operatorname{LIP}(\mathbb{Y})$ is the class of {measurable} $1$-Lipschitz functions with respect to the total variation distance on the space of finite point configurations on $\mathbb{Y}$ ({see \eqref{e.hlip_gen}}). Note that convergence in Kantorovich-Rubinstein distance implies convergence in distribution, and that the Kantorovich-Rubinstein distance dominates the total variation distance
\begin{equation}
\label{d:dtv}
\dTV(\xi,\zeta) {:=} \sup_{A\in \mathcal{N}_{\mathbb{Y}}} |\prob{\xi\in A} - \prob{\zeta\in A}|,
\end{equation}
where $\mathcal{N}_{\mathbb{Y}}$ is the $\sigma$-field on the space $\mathbf{N}_{\mathbb{Y}}$. Studying the Kantorovich-Rubinstein distance goes in the opposite direction to many other works on Poisson process approximation via Stein's method, where weaker distances than the total variation distance were considered, see e.g.\ \cite{Barbour1992, Barbour1992book, Chen2004, Schuhmacher2009, Xia2005}.

In general,  if the behaviour of $\xi$ is close to that of a Poisson process, by taking  $\mathbb{Y}=\mathbb{R}$ one gets an approximation for
$$
\max \{f(\bx,\nu): \bx\in\nu^k_{\neq}, g(\bx,\nu)=1\}.
$$ 
Thus,  approximation results for the point process $\xi$ can be a crucial tool to compute the limiting distributions of some extreme values of $\xi$.  In a similar fashion,  we can treat functionals of $\xi$ other than the maximum,  such as order statistics,  sums of the points or number of points in a certain region.

This paper continues the line of research initiated in \cite{decreusefond_functional_2016}. There,  processes of the form \eqref{e:Definition_Point_Process} were studied, but for a special case where $g$ and $f$ 	 depend only on $\bx$ and not on $\nu$, i.e.,  $f(\bx,\nu) = f(\bx),  g(\bx,\nu) = g(\bx)$.  In other words, the results of \cite{decreusefond_functional_2016} allow to deal with point processes that have the form of $U$-statistics. However, this formulation excludes point processes that arise naturally such as the nearest neighbor example mentioned above. This lacuna is addressed in the present paper.  We summarize our main results as follows.
\begin{itemize}
\item We establish a general Poisson process approximation result (Theorem \ref{t:gen_Poisson_approx}), which extends \cite[Theorem 2.6]{Barbour1992} from the total variation distance to the Kantorovich-Rubinstein distance.  Here we do not assume any structure on $\xi$ and the bounds are given in terms of a coupling between $\xi$ and its Palm measure.  

\item A similar result is derived for $\xi[\nu]$ under the assumption that $\nu$ is a Poisson process and that $f$ in \eqref{e:Definition_Point_Process} depends only on $\bx$ (see Theorem \ref{t:KR_coupling}). This result already extends the Poisson process approximation result of \cite[Theorem 3.1]{decreusefond_functional_2016} and may be considered as the process-level analogue of the Poisson approximation result in \cite[Theorem 3.1] {penrose_inhomogeneous_2018}. 
Theorem \ref{t:gen_Poisson_approx} and Theorem \ref{t:KR_coupling} already cover many existing applications in the literature (see \cite{decreusefond_functional_2016,Otto2020}). 
   
\item We establish a Poisson process approximation result for $\xi[\nu]$ when $\nu$ is a Poisson process and $f(\bx,\nu), g(\bx,\nu)$ depend on random neighborhoods around $\bx$ (see Theorem \ref{t:qldep}).  This is an important concept in stochastic geometry known as stabilization.  We  elaborate on this {in Remark \ref{r:stab}}.  Here,  our bounds are given more explicitly in terms of $g$ as well as the tail probabilities of the random neighborhoods.  This result in its full generality is one of the novel contributions of this work and an analogous result even in total variation distance is not available to the best of our knowledge.  Under total variation distance,  approximation results are proven only for $f(\bx,\nu) = f(\bx)$ in \cite[Theorem 3.3]{Otto2020} and the bounds therein are same as ours (barring a factor of $2$) even though our bounds hold for more general functions $f$ and in the stronger Kantorovich-Rubinstein distance. 

\item For the binomial case, the lack of spatial independence leads to a more restricted statement and more involved bounds. In particular, for $k = 1$ we can show a Poisson process approximation result under {stabilization} of $f(x,\nu), g(x,\nu)$ (see Theorem \ref{thm:binomial}). The restriction to $k=1$ here is more for ease of stating the bounds, see Remark \ref{r:krbin}.  We are not aware of such a general approximation result even under weaker distances for point processes driven by stabilizing functionals of binomial point {processes.}  In fact,  such results are not known even for functionals with finite range of stabilization, i.e., {$f(x,\nu), g(x,\nu)$} depend on fixed compact neighborhoods around $\bx$.
\end{itemize}

One point of difference between the four results is that the bounds in the first two results do not assume any local dependence structure on $f$ and $g$, and the bounds are given in terms of a coupling.  On the other hand, the latter two bounds are given explicitly in terms of of the functional $g$ alone.  As will be illustrated {in Section \ref{sec:Applications},} our general bounds shall necessitate a few additional computations in applications compared to those for a Poisson {(non-functional)} approximation result.  

\begin{rem} 
\label{r:stab}
 The notion of stabilization was introduced in the context of proving law of large numbers and central limit theorems for functionals of point processes \cite{Schreiber2010,Yukich2013},  and is meant to capture the spatial dependence of the functionals.  Apart from its extensive use in proving laws of large numbers and central limit theorems,  it has also been used to prove moderate deviations \cite{Eichelsbacher2015} and normal approximation bounds \cite{Lachieze2019}.  Even though this notion is implictly used in some of the Poisson approximation results,  to the best of our knowledge there has not been a general Poisson process approximation result for stabilizing functionals until the recent work  \cite{Otto2020}.  In this regards,  our article makes an important contribution in furthering the usage of stabilization in Poisson process approximation.  More detailed comparisons of our results with those in the literature (especially \cite{Barbour1992,decreusefond_functional_2016,Otto2020}) are provided after the respective theorem statements. 
\end{rem}

 Apart from extending total variation bounds in \cite{Otto2020} to Kantorovich-Rubinstein distance and the applications therein,  we further envisage that the general Poisson process approximation result (Theorem \ref{t:gen_Poisson_approx}) and the proof approach may be useful for deriving similar bounds when $\nu$ is not a Poisson process as well.   Our latter results (Theorems \ref{t:qldep} and \ref{thm:binomial}),  significantly enhance the scope of many applications and also allow us to consider more general point processes than those investigated in the literature so far.  

In order to demonstrate the applicability and versatility of our results,  we consider two examples -- one from the field of random topology and the other from stochastic geometry:   
(1)   Morse critical points: For a homogeneous Poisson process on a flat torus, we consider the distance function (from the points) and define critical points together with their Morse index.  Grouping together all critical points of the same index, we have a point process for which we wish to prove Poisson convergence under suitable scaling. This convergence statement has a significant contribution to the analysis of the homological connectivity phenomenon studied in \cite{Bobrowski2019}. In particular, it yields the asymptotic behaviour of the persistent homology  in the critical window for homological connectivity (see Remark \ref{r:critpts}). (2)  Large $k$-nearest neighbor balls:  For an underlying Poisson or binomial point process we put around each point a ball whose radius is the distance to its $k$-nearest neighbor and establish that {the scaled volumes (with respect to the intensity measure)} of these balls can be approximated by a Poisson process.  In both the applications, our results yield explicit bounds for the rate of convergence (when the point processes are restricted to suitable sets).  Here,  by a more careful computation,  we obtain better bounds than those in \cite[Theorem 4.2]{Otto2020} for {underlying Poisson processes} and we also obtain a new approximation result when the underlying point process $\nu$ is a {binomial} point process.  Comparisons between our applications and those existing in the literature as well as more potential applications are discussed in Section \ref{sec:Applications}.

Before we end the introduction,  we will say a quick word about our proofs.  In order to control the Kantorovich-Rubinstein distance between $\xi$ and $\zeta$, we employ the same generator approach as in \cite{decreusefond_functional_2016}, see also \cite{Barbour1992}.  To further bound the resulting expressions we use two classical approaches from Stein's method,  coupling and local dependence,  which lead to slightly different results for the Poisson process approximation of $\xi$.  

This paper is organized as follows. After introducing some preliminaries in Section \ref{sec:Preliminaries}  (including Stein's method for Poisson process approximation), we present our coupling approach in Section \ref{sec:Coupling}.  Local dependence (and stabilization) is used for underlying Poisson and binomial point processes in Sections \ref{s:locdep} and \ref{s:locdep_binomial}, respectively.  Finally,  we provide two applications of our  main results to {random topology and} stochastic geometry in Section \ref{sec:Applications}.  

\section{Preliminaries}\label{sec:Preliminaries}
\subsection{Point processes}

We shall adapt the notation from \cite{decreusefond_functional_2016} and for more on point process theory, we refer the reader to \cite{Baccelli2020,LastPenrose17}.
Let  $(\X, \cX)$ be a locally compact second countable Hausdorff space (lcscH space).
We denote by $\bN_{\X}$ the space of $\sigma$-finite counting measures on $\X$, and by $\widehat \bN_{\X}$ the space of all finite counting measures on $\X$. We equip the spaces $\bN_{\X}$ and $\widehat{\bN}_{\X}$ with the corresponding $\sigma$-algebras $\cN_{\X}$ and $\widehat{\cN}_{\X}$, {which} are induced by the maps $\omega \mapsto \omega(B)$ {for all $B \in \cX$.} 

By \cite[Proposition 6.2]{LastPenrose17}, each $\omega\in\bN_{\X}$ can be written as $\omega=\sum_{i\in I} \delta_{x_i}$, where $(x_i)_{i\in I}$ is a countable collection of points in $\X$ and $\delta_x$ stands for the Dirac measure concentrated at $x\in\X$. Due to this representation we can think of counting measures as multisets and treat them as such by abusing notation.  Thus, we write $\sum_{x \in\omega}$ for the sum over all $x_i$, $i\in I$, which is the same as the integral with respect to $\omega$.  More generally, we define for $k\in\N$,
\remove{$$
\omega^k_{\neq} := \{ (y_1,\hdots,y_k)\in \omega^k : y_j\neq y_\ell, j\neq \ell \} := \{ (x_{i_1},\hdots,x_{i_k}): i_1,\hdots,i_k\in I, i_j\neq i_\ell, j\neq \ell \}
$$
}
$$
\omega^k_{\neq} := \{ (x_{i_1},\hdots,x_{i_k}): i_1,\hdots,i_k\in I, i_j\neq i_\ell, j\neq \ell \}
$$
and denote by $\sum_{(x_1,\hdots,x_k)\in \omega^k_{\neq}}$ the sum over all $k$-tuples of distinct points of $\omega$, which is the same as the integral with respect to the {$k$-th} factorial measure of $\omega$ (see \cite[Subsection 4.2]{LastPenrose17}). By $\omega\cap S$ we mean the restriction of $\omega$ to $S\in\cX$. Moreover,  we write $\omega_1\subset\omega_2$ for $\omega_1,\omega_2\in\bN_{\X}$ if $\omega_1(A)\leq \omega_2(A)$ for all $A\in\cX$. 
In addition, for $\omega_1,\omega_2\in\bN_{\X}$ such that $\omega_1=\sum_{i\in I} \delta_{x_i}$ and $\omega_2=\sum_{j\in J} \delta_{y_j}$, we define
$$
\omega_1\setminus\omega_2 := \sum_{x\in \{x_i: i\in I\}\setminus \{y_j: j\in J\}} \delta_x.
$$
The sets under the sum must be understood as multisets since $\omega_1$, $\omega_2$ and $\omega_1\setminus\omega_2$ can have multiple points (i.e., points $x\in \X$ such that $\omega(\{x\}) > 1$). The symmetric difference $\omega_1\triangle\omega_2$ is defined as $\omega_1\triangle\omega_2:=\omega_1\setminus\omega_2 + \omega_2\setminus\omega_1$.

The total variation distance between two measures $\mu_1,\mu_2$ on $\X$ is defined as 
$$
d_{TV}(\mu_1,\mu_2):= \sup_{\substack{A \in \cX\\ \mu_1(A),\mu_2(A)<\infty}}|\mu_1(A) - \mu_2(A)|.
$$
We denote by $\cLip(\X)$ the class of measurable functionals $h : \widehat{\bN}_{\X} \to \R$ such that for all $\omega_1,\omega_2 \in  \widehat{\bN}_{\X}$,
\begin{equation}
\label{e.hlip_gen}
|h(\omega_1) - h(\omega_2)| \leq d_{TV}(\omega_1,\omega_2) 
\end{equation}
In other words, $\cLip(\X)$ is the class of measurable $1$-Lipschitz functionals on $\widehat{\bN}_{\X}$ with respect to the total variation distance between measures. Let $\|\cdot\|$ denote  the {\em total variation} of a signed measure, i.e., $\|\mu\| = \mu_+(\X) + \mu_-(\X)$ where $\mu_+,\mu_-$ are the positive and negative parts of the signed measure $\mu$. 
Note that if $\omega_1,\omega_2 \in \widehat \bN_{\X}$, then $d_{TV}(\omega_1,\omega_2) = \max \{ (\omega_1 \setminus \omega_2)(\X), (\omega_2 \setminus \omega_1)(\X) \}$.
Hence, we have the following relation between the total variation distance and the total variation of the difference between counting measures
\begin{equation}
\label{e:dtvbd}
 d_{TV}(\omega_1,\omega_2) \leq \|\omega_1 - \omega_2\| = (\omega_1 \triangle \omega_2)(\X) \leq 2d_{TV}(\omega_1,\omega_2).
\end{equation}
The first inequality becomes an equality when $\omega_1 \subset \omega_2$. 
Thus, measuring distance between counting measures via total variation distance and total variation of measures differs at most by a factor of $2$. 

\remove{We will want to measure the distance between finite point processes, i.e., random counting measures in $\widehat\bN_{\X}$. For such processes $\zeta_1,\zeta_2$, we define the Kantorovich-Rubinstein (KR) distance $\dkr$ \cite[(2.6)]{decreusefond_functional_2016} as
\begin{equation}
\label{d:dkr}
\dkr(\zeta_1,\zeta_2) := \sup_{h \in \cLip(\X)}|\mean{h(\zeta_1)} - \mean{h(\zeta_2)}|.
\end{equation}
The more classical distance between point processes is the total variation distance. This is defined as
$$\dTV({\zeta_1},{\zeta_2}) :=  \sup_{A \in \cN_{\X}} |\prob{\zeta_1 \in A} - \prob{\zeta_2 \in A}|.$$
}
Recall the definitions of Kantorovich-Rubinstein (KR) distance $\dkr$ and the classical total variation distance $\dTV$ in \eqref{d:dkr} and \eqref{d:dtv} respectively.  We refer to \cite[Section 2.5]{decreusefond_functional_2016} for a dual formulation as well as more details on the KR distance.  Since, for $A\in\cX$, $\omega \mapsto \ind\{\omega \in A\}$ is a function in $\cLip(\X)$, we have that  $\dTV(\zeta_1,\zeta_2) \leq \dkr(\zeta_1,\zeta_2)$ for finite point processes $\zeta_1$ and $\zeta_2$. 
For an example of a sequence of point processes that converges in $\dTV$ but not in $\dkr$, see \cite[Example 2.2]{decreusefond_functional_2016}.

Let $\zeta$ be a point process with $\sigma$-finite intensity measure $\bM$, i.e., $\mean{\zeta(A)}=\bM(A)$ for all $A\in\cX$. The {\em reduced Palm expectation} of $\zeta$ at $x$ denoted by $\E^{!}_{x}$ is defined via the Campbell-Mecke-Little formula \cite[Corollary 3.1.14]{Baccelli2020},  as follows. For a measurable function $f : \X \times \bN_{\X} \to \R_+$, it holds that
\begin{equation}
\label{e:clm}
\mean{\sum_{x \in \zeta} f(x,\zeta)} = \int_{\X}\E^{!}_{x}\big\{f(x,\zeta +\delta_{x})\big\} \bM(\md x).
\end{equation}
The above formula can be extended to integrable real-valued functions by standard measure-theoretic arguments. Strictly speaking, $\E^{!}_{x}$ is defined only for $\bM$-a.e.\ $x$, but this suffices for our purposes. The corresponding probability distribution is denoted by $\P^{!}_{x}$ and the point process with this probability distribution is denoted by $\zeta^{!}_x$. 

An essential tool for studying Poisson processes is the multivariate Mecke equation (see \cite[Theorem 4.4]{LastPenrose17}). For a Poisson process $\eta$ on $\X$ with $\sigma$-finite intensity measure $\bK$ and a measurable function $f : \X^k \times \bN_{\X} \to \R_+$, we have that 
\begin{equation}
\label{e:mmecke}
\mean{\sum_{\bx \in \eta^k_{\ne}} f(\bx,\eta)} = \int_{\X^k}\mean{f(\bx,\eta+\delta_{\bx})}\bK^k(\md \bx),
\end{equation}
where $\bx = (x_1,\ldots,x_k)$ and $\delta_{\bx} := \sum_{i=1}^k \delta_{x_i}$. The Mecke equation for $k = 1$ characterizes the Poisson process \cite[Theorem 4.1]{LastPenrose17} and by the definition of the reduced Palm version of a point process, we can also rephrase this characterization as $\eta^{!}_x \stackrel{d}{=} \eta$ for all $x \in \X$,  where $\stackrel{d}{=}$ denotes equality in distribution of the two random elements.

For two Poisson processes $\zeta_1$ and $\zeta_2$ with finite intensity measures $\bM$ and $\bL$, it is known (see \cite[Remark 3.2(iv)]{decreusefond_functional_2016}) that
\begin{equation}
\label{e:dkrPoisson0}
\dkr(\zeta_1,\zeta_2)\leq d_{TV}(\bM,\bL).
\end{equation}
Thus, it follows from the triangle inequality for $\dkr$ that, for any finite point process $\xi$,
\begin{equation}\label{e:dkrPoisson}
\dkr(\xi,\zeta_1) \leq d_{TV}(\bM,\bL) + \dkr(\xi,\zeta_2).
\end{equation}
Due to this inequality it is often sufficient to compare a finite point process with a Poisson process with the same intensity measure.

\subsection{Stein's method for Poisson process approximation}\label{sec:prelim_stein}

Stein's method is used to compare the distribution of two random objects (cf. \cite{Ross2011}). In our case we would like to compare a  point process $\xi$ to a given Poisson process $\zeta$. We will use the generator approach with respect to the KR distance as in \cite{decreusefond_functional_2016}, using Glauber dynamics for Poisson process. See e.g.\ \cite{Barbour1992, Barbour1992book, Chen2004, Schuhmacher2009, Xia2005} for more on Poisson process approximation under other metrics.

Let $\zeta$ be a Poisson process on $(\X,\cX)$, with a finite intensity measure $\bM$.
For $h:\widehat\bN_{\X}\to \R$ {and $\omega\in\widehat\bN_{\X}$,} we define
\begin{equation}
\label{d:gldyn}
\cL h(\omega) := \int_{\X} D_xh(\omega) \bM(\md x) - \int_{\X} D_xh(\omega - \delta_x) \omega(\md x),
\end{equation}
where $D_x h(\omega) := h(\omega+\delta_x) - h(\omega)$, and $\int_{\X} f(x)\omega(\md x) = \sum_{x\in \omega} f(x)$. 
It can be shown (see \cite[Proposition 10.4.VII]{Daley2007} or \cite{Preston1975}) that the operator $\cL$ is the generator of a Markov process whose stationary distribution is the same as that of $\zeta$.
Further, $\cL$ satisfies the following equation (see \cite[(4.3)]{decreusefond_functional_2016})
\begin{equation}
\label{e:stein}
 \mean{h(\zeta)} - h(\omega) = \int_0^{\infty}\cL P_sh(\omega) \md s,
\end{equation}
where $P_s$ denotes the Markov semigroup (also known as the Ornstein-Uhlenbeck semigroup) corresponding to the generator $\cL$, and $P_0h \equiv h$. The Markov process $(\zeta_s)_{s \geq 0}$ corresponding to the generator $\cL$ is called Glauber dynamics and is the spatial birth-death process in continuous time described as follows. If $\zeta_s = \omega$ at time $s$, each particle in $\omega$ dies at rate $1$ and new particles are born at rate $\bM(\X)$ with their location chosen according to the probability measure $\bM(\cdot) / \bM(\X)$. The new particles also have an exponentially distributed (with mean $1$) lifetime. With this process representation, we can represent the semigroup $P_s$ as follows. For $h:\widehat\bN_{\X}\to \R$ we have
$$ P_sh(\omega) = \mean{h(\zeta_s) \mid  \zeta_0 = \omega}.$$

An important property we will use later is that $P_s$ satisfies the following  Lipschitz property with respect to the total variation of measures \cite[Lemma 5.2]{decreusefond_functional_2016},
\begin{equation}
\label{e:pslip}
|P_sh(\omega_1) - P_sh(\omega_2)| \leq e^{-s}(\omega_1 \triangle \omega_2)(\X),
\end{equation}
for $\omega_1,\omega_2 \in \widehat{\bN}_{\X}$ and $h \in \cLip(\X)$. The property is deduced using the above representation of the semigroup and the exponential lifetimes of the particles. 

\section{Approximation for point processes via Palm coupling}\label{sec:Coupling}
We present two results in this section that provide Poisson process approximation bounds in the KR distance via suitable coupling between the original process and its Palm measure.  While our first theorem holds for general point processes, the second theorem is specialized to the case of point processes driven by a Poisson process. These two results already significantly generalize many of the existing bounds in the literature. We shall discuss these connections in detail after the respective theorem statements.
\begin{thm}
\label{t:gen_Poisson_approx}
Let $\xi$ be a finite point process with intensity measure $\bL$ on $\X$, such that $\bL(\X) < \infty$. Let $\zeta$ be a Poisson process with a finite intensity measure $\bM$ on $\X$. Suppose that for $\bL$-a.e. $x \in \X$ we have coupled point processes $\xi^{x},\txi^{x}$ such that $\xi^{x}\stackrel{d}{=}\xi$, and $\txi^{x}\stackrel{d}{=}\xi^{!}_{x}$, respectively  ($\xi^{!}_{x}$ denotes the reduced Palm version of the point process $\xi$ at $x$). We also assume that $x\mapsto \mean{(\xi^{x} \triangle \txi^{x})(\X)}$ is measurable. Then 
\begin{align*}
\dkr(\xi,\zeta) & \leq d_{TV}(\bL,\bM) + 2 \int_{\X} \mean{(\xi^{x} \triangle \txi^{x})(\X)}\bL(\md x).
\end{align*}
\end{thm}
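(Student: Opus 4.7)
The plan is to use the Stein--Glauber generator framework from Section~\ref{sec:prelim_stein} and reduce everything to a one-point coupling argument via the Campbell--Mecke formula. The very first step is the triangle inequality together with \eqref{e:dkrPoisson}, which lets us replace $\zeta$ by a Poisson process $\zeta'$ with intensity measure $\bL$ at the cost of the $d_{TV}(\bL,\bM)$ term. After this reduction, the whole task becomes bounding $\dkr(\xi,\zeta')$ by $2\int_{\X}\mean{(\xi^{x}\triangle\txi^{x})(\X)}\bL(\md x)$.

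For the core bound, fix $h\in\cLip(\X)$ and let $\cL'$, $P'_s$ denote the generator \eqref{d:gldyn} and semigroup associated to $\zeta'$. Plugging $\xi$ into the Stein equation \eqref{e:stein} and taking expectations yields
\begin{equation*}
\mean{h(\zeta')}-\mean{h(\xi)} = \int_0^\infty \mean{\cL' P'_s h(\xi)}\,\md s.
\end{equation*}
Writing $\cL' P'_s h(\xi)$ out using \eqref{d:gldyn} gives an integral against $\bL$ minus a sum over the points of $\xi$. I would then apply the Campbell--Mecke formula \eqref{e:clm} to the sum with the choice $f(x,\omega)=D_x P'_s h(\omega-\delta_x)$, so that the ``minus $\delta_x$'' cancels the ``plus $\delta_x$'' introduced by the Palm formula. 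The upshot is the identity
\begin{equation*}
\mean{\cL' P'_s h(\xi)} = \int_{\X}\mean{D_x P'_s h(\xi)-D_x P'_s h(\xi^{!}_x)}\,\bL(\md x).
\end{equation*}

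Now the coupling $(\xi^{x},\txi^{x})$ comes in: since $\xi^{x}\stackrel{d}{=}\xi$ and $\txi^{x}\stackrel{d}{=}\xi^{!}_x$, I can rewrite the integrand as $\mean{D_x P'_s h(\xi^{x})-D_x P'_s h(\txi^{x})}$. Expanding $D_x$ and applying the Lipschitz bound \eqref{e:pslip} twice, once to $\xi^{x}+\delta_x$ versus $\txi^{x}+\delta_x$ and once to $\xi^{x}$ versus $\txi^{x}$, gives
\begin{equation*}
|D_x P'_s h(\xi^{x})-D_x P'_s h(\txi^{x})|\leq 2e^{-s}(\xi^{x}\triangle\txi^{x})(\X),
\end{equation*}
because the symmetric differences of the augmented configurations are the same as $(\xi^{x}\triangle\txi^{x})(\X)$. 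Integrating $2e^{-s}$ over $s\in(0,\infty)$ absorbs the exponential and leaves the factor $2$ in the final bound. Taking the supremum over $h\in\cLip(\X)$ and combining with the initial triangle-inequality step completes the proof.

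The only mildly delicate point is the bookkeeping in the Campbell--Mecke step: one must be careful that the Palm formula \eqref{e:clm}, as stated in the paper, adds a $\delta_x$ to the Palm process, and this is precisely why one applies it to $D_x P'_s h(\omega-\delta_x)$ rather than to $D_x P'_s h(\omega)$. Everything else, including the measurability assumption on $x\mapsto \mean{(\xi^{x}\triangle\txi^{x})(\X)}$ which makes the outer integral well-defined, is routine.
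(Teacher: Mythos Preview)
Your proposal is correct and follows essentially the same route as the paper: reduce to $\bM=\bL$ via \eqref{e:dkrPoisson}, use the Stein identity \eqref{e:stein} together with the Campbell--Mecke formula \eqref{e:clm} to express $\mean{\cL P_s h(\xi)}$ as an $\bL$-integral of $\mean{D_x P_s h(\xi)-D_x P_s h(\xi^!_x)}$, insert the coupling, and bound via \eqref{e:pslip}. The only cosmetic difference is that the paper first bounds $|\mean{\cL h(\xi)}|$ for a generic $h\in\cLip(\X)$ and then substitutes $P_s h$, whereas you work with $P'_s h$ from the outset; the paper also makes the Fubini--Tonelli justification for interchanging the $s$-integral and the expectation explicit, which you should record rather than leave as ``routine.''
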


\begin{rem}
{\rm The measurability condition on $x\mapsto \mean{(\xi^{x} \triangle \txi^{x})(\X)}$ is satisfied if the couplings $\xi^{x}$ and $\txi^{x}$ are defined on the same probability space for all {$x$} and the random function $x\mapsto (\xi^{x}, \txi^{x})$ is measurable.}
\end{rem}

A version of the above theorem for the total variation distance was proven in \cite[Theorem 2.6]{Barbour1992} without the factor of $2$. The result of Barbour and Brown \cite{Barbour1992} was used in \cite[Theorem 3.3]{Otto2020} to derive total variation distance bounds for Poisson approximation of stabilizing statistics of Euclidean Poisson processes. The above theorem implies that the bounds in \cite[Theorem 3.3]{Otto2020} hold also under the stronger KR distance immediately, only with an additional factor of $2$. We shall later state a much more general theorem for stabilizing statistics in Section \ref{s:locdep}. Our proof of Theorem \ref{t:KR_coupling} is similar in spirit  to that of  \cite[Theorem 2.6]{Barbour1992},  using Lipschitz functions rather than bounded functions of the point processes.  
\begin{proof}[Proof of Theorem \ref{t:gen_Poisson_approx}]
Due to \eqref{e:dkrPoisson} we can assume that $\bM \equiv \bL$ throughout the proof.
Let $h\in \cLip(\X)$, and recall the definition of the generator $\cL$ of the Glauber dynamics in \eqref{d:gldyn}. Observe that $\cL h(\omega) \leq \bL(\X) + \omega(\X)$ for $\omega \in \widehat{\bN}_{\X}$. Using the finiteness of $\bL(\X)$ with Fubini-Tonelli Theorem, and the Campbell-Little-Mecke formula \eqref{e:clm}, we have that
\begin{align}
\mean{\cL h(\xi)} & = \int_{\X} \mean{D_xh(\xi)} \bL(\md x) - \int_{\X} \E^{!}_x\{D_xh(\xi)\}\bL(\md x) = \int_{\X} \left( \mean{D_xh(\xi)} - \mean{D_xh(\xi^{!}_x)} \right) \bL(\md x). \label{e:exp_lhphi_gen}
\end{align}
Using our coupling assumption and the Lipschitz assumption on $h$, we can bound the absolute value of the last integrand by
$$
 \mean{|D_xh(\xi^x) - D_xh(\txi^x)|} \leq \mean{|h(\xi^x + \delta_x) - h(\txi^x +\delta_x)| + |h(\xi^x) - h(\txi^x)|} \leq 2 \mean{(\xi^x \triangle \txi^x)(\X)}
$$
for $x\in\X$ so that
$$
|\mean{\cL h(\xi)}| \leq 2  \int_{\X} \mean{(\xi^x \triangle \txi^x)(\X)}  \bL(\md x) =:R_{\xi}.
$$
Similarly, replacing $h$ by $P_sh$ and using \eqref{e:pslip}, we have that
\[ |\mean{\cL P_sh(\xi)}| \leq e^{-s} R_{\xi}.\]
Integrating over $s$ yields
\begin{equation}
\label{e:Lpsh_bd}
\int_0^{\infty}  |\mean{\cL P_sh(\xi)}|ds \leq R_{\xi}.
\end{equation}
Combining \eqref{d:gldyn} with \eqref{e:pslip} leads to $|\cL P_sh(\xi)| \leq e^{-s}(\bL(\X) + \xi(\X))$. Using this and the finiteness of $\bL(\X)$, we can apply the Fubini-Tonelli Theorem in the Stein equation \eqref{e:stein} to conclude that
\eqb\label{e:int_bound} \mean{h(\zeta)} - \mean{h(\xi)} = \mean{\int_0^{\infty}\cL P_sh(\xi) \md s} = \int_0^{\infty}\mean{\cL P_sh(\xi)} \md s.
\eqe
Thus, from definition of KR distance \eqref{d:dkr}, \eqref{e:int_bound} and \eqref{e:Lpsh_bd} we have that
\begin{align*}
\dkr(\xi,\zeta)  = \sup_{h \in \cLip(\X)} \left|\int_0^{\infty}\mean{\cL P_sh(\xi)} \md s\right|
  \leq \sup_{h \in \cLip(\X)} \int_0^{\infty}\left |\mean{\cL P_sh(\xi)}\right | \md s \leq R_{\xi},
\end{align*}
as required. 
\end{proof}

Next, we want to apply the proof strategy of Theorem \ref{t:gen_Poisson_approx} in the specific case when $\xi$ is generated by statistics of a Poisson process. Let {$(\X,\cX)$ and $(\Y,\cY)$ be lcscH spaces} and let $f:\X^k \to \Y$ be a symmetric measurable function. Let $g : \X^k \times \bN_{\X} \to  \{0,1\}$ be another  measurable function that is also symmetric with respect to  the $\bx$ coordinates. For $\omega\in \bN_{\X}$, define
\begin{equation}
\label{e:defn_varphi}
\xi[\omega] := \frac{1}{k!}\sum_{\bx \in \omega^k_{\ne}}g(\bx,\omega)\delta_{f(\bx)}.
\end{equation}
In other words, the function $g$ is used as a ``flag"  indicating whether  or not to include a point at $f(\bx)$ in our point process $\xi[\omega]$.
We will study the case  $\xi :=\xi[\eta]$, where $\eta$ is a Poisson process on $\X$ with a $\sigma$-finite intensity measure $\bK$. A simple consequence of the Mecke formula \eqref{e:mmecke} is that the intensity measure $\bL$ of $\xi$ can be expressed  as
\begin{equation}
\label{e:int_L}
\bL(\md y) = \frac{1}{k!}\int_{\X^k} \ind\set{f(\bx) \in \md y} p(\bx) \bK^k(\md \bx),
\end{equation}
where $p(\bx) := \mean{g(\bx,\eta+\delta_{\bx})}$.
\begin{thm}
\label{t:KR_coupling}
Let $\eta$ be a Poisson process on $\X$ with a $\sigma$-finite intensity measure $\bK$. Let $\xi := \xi[\eta]$ be as defined in \eqref{e:defn_varphi} and such that $\bL(\Y) < \infty$. Suppose that for (almost) every $\bx\in\X^k$ with $p(\bx) > 0$, we have a pair of coupled point processes $\xi^{\bx}$ and $\txi^{\bx}$, such that $\xi^{\bx}\stackrel{d}{=}\xi$, and $\txi^{\bx} \stackrel{d}{=} (\xi[\eta + \delta_{\bx}] - \delta_{f(\bx)}) | \set{ g(\bx,\eta + \delta_\bx) = 1}$ (i.e., the conditional distribution of $\xi[\eta + \delta_{\bx}] - \delta_{f(\bx)}$ given $g(\bx,\eta + \delta_\bx) = 1$). Assume also that $\bx \mapsto\mean{(\xi^{\bx} \triangle \txi^{\bx})(\Y)}$ is measurable. Let $\zeta$ be a Poisson process on $\Y$ with a finite intensity measure $\bM$. Then, 
\begin{align*}
\dkr(\xi,\zeta) & \leq d_{TV}(\bL,\bM) +  \frac{2}{k!} \int_{\X^k} \mean{(\xi^{\bx} \triangle \txi^{\bx})(\Y)} p(\bx)\bK^k(\md \bx). 
\end{align*}
\end{thm}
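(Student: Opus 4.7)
The plan is to mimic the proof of Theorem \ref{t:gen_Poisson_approx}, with the Campbell--Little--Mecke formula replaced by the multivariate Mecke equation \eqref{e:mmecke}. First, using \eqref{e:dkrPoisson} I reduce to the case $\bM = \bL$, paying the price $d_{TV}(\bL,\bM)$. For that case I invoke the Glauber generator representation \eqref{d:gldyn}, the Stein equation \eqref{e:stein}, and the semigroup Lipschitz estimate \eqref{e:pslip}, so it suffices to bound $|\mean{\cL h(\xi)}|$ uniformly in $h \in \cLip(\Y)$ and then pick up the factor $e^{-s}$ by replacing $h$ with $P_s h$.

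The core computation is the identity
\[
\mean{\cL h(\xi)} = \int_{\Y} \mean{D_y h(\xi)}\,\bL(\md y) - \mean{\,\tfrac{1}{k!}\sum_{\bx\in\eta^k_{\ne}} g(\bx,\eta)\, D_{f(\bx)} h(\xi[\eta]-\delta_{f(\bx)})\,}.
\]
For the first term I use the representation \eqref{e:int_L} of $\bL$ and the fact that $\xi^{\bx}\stackrel{d}{=}\xi$ to rewrite it as $\frac{1}{k!}\int_{\X^k} p(\bx)\,\mean{D_{f(\bx)} h(\xi^{\bx})}\,\bK^k(\md\bx)$. For the second term I apply \eqref{e:mmecke} to shift $\eta\mapsto \eta+\delta_{\bx}$, producing
\[
\frac{1}{k!}\int_{\X^k} \mean{g(\bx,\eta+\delta_{\bx})\, D_{f(\bx)} h(\xi[\eta+\delta_{\bx}]-\delta_{f(\bx)})}\,\bK^k(\md\bx).
\]
On $\{g(\bx,\eta+\delta_{\bx})=0\}$ the integrand vanishes, and on the complementary event the ordered $k$-tuple $\bx$ contributes an atom $\delta_{f(\bx)}$ to $\xi[\eta+\delta_{\bx}]$, so the subtraction is well-defined as a counting measure. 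Conditioning on $\{g(\bx,\eta+\delta_{\bx})=1\}$ and using the defining distribution of $\txi^{\bx}$, this term equals $\frac{1}{k!}\int_{\X^k} p(\bx)\, \mean{D_{f(\bx)} h(\txi^{\bx})}\,\bK^k(\md\bx)$.

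Combining both parts and then applying the Lipschitz property of $h$ twice (once with $+\delta_{f(\bx)}$ and once without), as in the proof of Theorem \ref{t:gen_Poisson_approx}, yields
\[
|\mean{\cL h(\xi)}| \;\leq\; \frac{2}{k!}\int_{\X^k} p(\bx)\, \mean{(\xi^{\bx}\triangle\txi^{\bx})(\Y)}\,\bK^k(\md\bx) \;=:\; R_{\xi}.
\]
Replacing $h$ by $P_sh$ and using \eqref{e:pslip} to both $P_sh(\cdot+\delta_{f(\bx)})$ and $P_sh(\cdot)$ gives $|\mean{\cL P_s h(\xi)}|\leq e^{-s} R_{\xi}$. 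Integrating in $s$ via \eqref{e:stein} and taking the supremum over $h \in \cLip(\Y)$ delivers $\dkr(\xi,\zeta)\leq R_{\xi}$ in the case $\bM=\bL$; together with \eqref{e:dkrPoisson} this gives the claim.

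\paragraph{Main obstacle.}
The delicate step is the second term: one must see that after the Mecke shift the integrand automatically restricts to $\{g(\bx,\eta+\delta_{\bx})=1\}$, and that the resulting conditional distribution of $\xi[\eta+\delta_{\bx}]-\delta_{f(\bx)}$ matches the definition of $\txi^{\bx}$ (including that the subtraction is a bona fide counting measure only on this event). Measurability of the resulting integrand in $\bx$ is guaranteed by the hypothesis on $\bx\mapsto\mean{(\xi^{\bx}\triangle\txi^{\bx})(\Y)}$, and the finiteness of $\bL(\Y)$ justifies the Fubini--Tonelli exchanges used throughout.
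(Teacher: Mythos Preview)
Your proposal is correct and follows essentially the same route as the paper's proof: reduce to $\bM=\bL$ via \eqref{e:dkrPoisson}, expand $\mean{\cL h(\xi)}$ using \eqref{e:int_L} for the birth term and the multivariate Mecke formula \eqref{e:mmecke} for the death term, condition on $\{g(\bx,\eta+\delta_{\bx})=1\}$ to identify $\txi^{\bx}$, and then use the Lipschitz bound and \eqref{e:pslip} exactly as in Theorem~\ref{t:gen_Poisson_approx}. The paper's argument is organized identically, culminating in the displayed identity \eqref{e:exp_lhphi} and the same $R_\xi$ bound.
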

\begin{rem}
\label{rem:KRcoupling}
{\rm Before proceeding to the proof, we show how the above theorem includes the results in \cite[Theorem 3.1]{penrose_inhomogeneous_2018} and \cite[Theorem 3.1]{decreusefond_functional_2016} as special cases. 
\begin{enumerate}
\item If $g(\bx,\omega) = g(\bx)$  we can set $dom(f) = \{ \bx : g(\bx) = 1\}$ and then $p(\bx) = \ind\set{\bx \in dom(f)}$. Further, there is a natural coupling where we take $\xi^{\bx} = \xi, \txi^{\bx} = \xi + \hat{\xi}[\bx,\eta + \delta_{\bx}]$, where $\hat{\xi}[\bx,\eta+ \delta_{\bx}]:= \xi[\eta + \delta_{\bx}] -\delta_{f(\bx)} - \xi[\eta]$ (see \cite[(6.2)]{decreusefond_functional_2016} for an explicit expression). By the assumptions on $f$ and $g$ we have that  $\hat{\xi}[\bx,\eta+ \delta_{\bx}] \in \widehat{\bN}_{\Y}$. In this case, we have that
$$ (\xi^{\bx} \triangle \txi^{\bx})(\Y) = \hat{\xi}[\bx,\eta+ \delta_{\bx}](\Y),$$
which is the same as the bound obtained for $R_{\eta}$ in \cite[page 2172]{decreusefond_functional_2016}. Following the arguments there,  we recover \cite[Theorem 3.1]{decreusefond_functional_2016}, which is one of the main theorems in that paper.

\item In many cases, the bound achieved using the coupling in Theorem \ref{t:KR_coupling}  is very similar to the bound one often obtains when applying \cite[Theorem 3.1] {penrose_inhomogeneous_2018}. This bound is merely the Poisson approximation for $\xi(\Y)$; see the proof of \cite[Theorem 2.8]{Iyer2020}, for example.  In some of these examples,  the bounds obtained for the approximation of the random \emph{variable} $\xi(\Y)$ can be extended to bounds for Poisson \emph{process} approximation with a few additional calculations.

\item The above theorem suffices in order to extend the results of \cite{Otto2020} (see Theorem 3.3 therein) under the KR distance. However, in Theorem \ref{t:qldep} we  will present an extension to more general Poisson driven point processes.
\end{enumerate} 
}
\end{rem}

Our proof combines ideas from the proof of Theorem \ref{t:gen_Poisson_approx} and those in the proof of \cite[Theorem 3.1]{penrose_inhomogeneous_2018}, for the case of $\xi(\Y)$. If we were to apply Theorem \ref{t:gen_Poisson_approx} directly, then we need a coupling of $\xi$ and its Palm measure on the space $\Y$.  This is not the coupling we have assumed in the above theorem. 
\begin{proof}[Proof of Theorem \ref{t:KR_coupling}]
Due to \eqref{e:dkrPoisson}, it suffices to prove the statement for the case $\bL \equiv \bM$.
Let $h\in\cLip(\Y)$. Recall that $\cL$ is the generator of the Glauber dynamics \eqref{d:gldyn} as in the proof of Theorem \ref{t:gen_Poisson_approx}. By \eqref{e:defn_varphi}, we obtain
\begin{align*}
\cL h(\xi) & = \int_{\Y} D_y h(\xi) \bL(\md y) - \int_{\Y} D_yh(\xi - \delta_y) \xi(\md y) \\
& =  \int_{\Y} D_y h(\xi) \bL(\md y) - { \frac{1}{k!} \sum_{\bx\in\eta_{\neq}^{k} } g(\bx,\eta)D_{f(\bx)}[h(\xi - \delta_{f(\bx)})].}
\end{align*}
Using Fubini-Tonelli, the finiteness of $\bL(\Y)$,  the multivariate Mecke formula \eqref{e:mmecke} and the definition of $\bL$ in \eqref{e:int_L}, we have that
\begin{equation}\label{e:exp_lhphi}
\begin{split}
\mean{\cL h(\xi)}  = \int_{\Y} \mean{D_yh(\xi)} \bL(\md y) - \frac{1}{k!}\int_{\X^k} \mean{g(\bx,\eta+  \delta_{\bx})D_{f(\bx)}[h(\xi[\eta +  \delta_{\bx}] - \delta_{f(\bx)})]} &\bK^k(\md \bx) \\
= \frac{1}{k!}  \int_{\X^k} \param{\mean{D_{f(\bx)}[h(\xi)]} - \cmean{D_{f(\bx)}[h(\xi[\eta +  \delta_{\bx}] - \delta_{f(\bx)})]}{ g(\bx,\eta+ \delta_{\bx}) = 1}} p(\bx) &\bK^k(\md \bx). 
\end{split}
\end{equation}
From the above identity, the existence of coupled point processes $\xi^{\bx}, \txi^{\bx}$ and the Lipschitz property \eqref{e.hlip_gen}, we can derive that
\begin{align*}
& |\mean{\cL h(\xi)}|  = \frac{1}{k!}\abs{\int_{\X^k} \param{ \mean{D_{f(\bx)}[h(\xi^{\bx})]} - \mean{D_{f(\bx)}[h(\txi^{\bx})] } } p(\bx) \bK^k(\md \bx)} \\
& \leq \frac{1}{k!} \int_{\X^k} \mean{\abs{h(\xi^{\bx} + \delta_{f(\bx)}) - h(\txi^{\bx} + \delta_{f(\bx)})}} p(\bx)\bK^k(\md \bx)  +  \frac{1}{k!} \int_{\X^k} \mean{\abs{h(\xi^{\bx}) - h(\txi^{\bx})}}  p(\bx)\bK^k(\md \bx) \\
& \leq \frac{2}{k!} \int_{\X^k} \mean{(\xi^{\bx} \triangle \txi^{\bx})(\Y)} p(\bx)\bK^k(\md \bx).
\end{align*}
Now, we can follow the proof of Theorem \ref{t:gen_Poisson_approx} by substituting the above bound instead of $R_{\xi}$ therein and complete the proof as in Theorem \ref{t:gen_Poisson_approx}.
\end{proof}

\section{Stabilization for Poisson input}
\label{s:locdep}

In this section, we consider the Poisson-driven point process $\xi = \xi[\eta]$, and extend the bound in Theorem \ref{t:KR_coupling} to the case where both $f$ and $g$ are locally dependent, i.e.,  may depend not only on $\bx$ but also on some (random) neighborhood around $\bx$. 

{Recall that $(\X,\cX)$ and $(\Y,\cY)$ are lcscH spaces.} Let $g : \X^k \times \bN_{\X} \to  \{0,1\}, f : \X^k \times \bN_{\X} \to \Y$ be measurable functions that are also symmetric in the $\bx$ coordinates.  Let $\cF := \cF(\X)$ be the space of all closed subsets of $\X$ equipped with the smallest $\sigma$-algebra containing $\{F \in \cF : F \cap K \neq \emptyset\}$ for all compact $K \subset \X$.  This $\sigma$-algebra is the Borel $\sigma$-algebra generated by the Fell topology.  Let $\cS : \X^{k} \times \bN_{\X} \to \cF$ be a measurable function. In this section we assume that $f$ and $g$ are \emph{localized} to $\cS$, i.e., for all $\omega \in \bN_{\X}$, for  all $\bx \in \omega^k_{\ne}$ and for all $S \supset \cS(\bx,\omega)$, we have that
\begin{equation}\label{e:AssumptionRi}
\begin{split}
g(\bx,\omega)&=g(\bx,\omega \cap S),\quad\text{and} \\
 f(\bx,\omega)&=f(\bx,\omega\cap S), \quad \text{if} \quad g(\bx,\omega)=1,
\end{split}
\end{equation}
where we  use $\omega \cap S$ as a multiset. We also assume that $\cS({\bx},\omega)$ is a stopping set, i.e., for every compact set $S\subset \X$ we have that
$$ \set{\omega \in \bN_{\X} : \cS({\bx},\omega)\subset S} =  \set{ \omega \in \bN_{\X} : \cS(\bx, \omega \cap S) \subset S}.$$
The construction of a measurable $\cS$ as above is not always obvious.  Often, such an $\cS$ is {constructed as a ball of random radius with the radius being called as  {\em radius of stabilization}}. The justification for the measurability of the radius of stabilization {in the Euclidean case} can be found in \cite[Definition 2.1 and below]{penrose_lln_2007}.  {We also refer the reader to \cite[Section 2]{Otto2020} for more information on stopping sets (see also \cite[Appendix A]{BaumstarkLast09} and \cite[Appendix A]{Last2021}).  Finally, we point out that more general regions than balls can occur as `localizing regions' $\cS$ (see \cite{Bhattacharjee2021}). }

Next, define
\begin{equation}
\label{e:defn_xi_ld}
\xi[\omega] := \frac{1}{k!}\sum_{\bx \in \omega^k_{\ne}}g(\bx,\omega)\delta_{f(\bx,\omega)}, 
\end{equation}
and consider $\xi:=\xi[\eta]$ where $\eta$ is a Poisson process with {a $\sigma$-finite} intensity measure $\bK$, as before. Using the multivariate Mecke formulae \eqref{e:mmecke}, we have that the intensity measure $\bL$ of $\xi$ is given by
\begin{equation}
\label{e:int_L_ld}
\bL(\md y) = \frac{1}{k!}\int_{\X^k} \mean{\ind\set{f(\bx,\eta+\delta_{\bx}) \in \md y} g(\bx,\eta+\delta_{\bx})} \bK^k(\md \bx).
\end{equation}
For a set $A\subset \X$ and $\bx=(x_1,\hdots,x_k)\in\X^k$ we write $\bx\subset A$ to denote $\{x_1,\hdots,x_k\}\subset A$.

\begin{thm}
\label{t:qldep}
Let $\xi = \xi[\eta]$ be the point process defined as in \eqref{e:defn_xi_ld} with $f,g$ satisfying \eqref{e:AssumptionRi} {and $\bL(\Y)<\infty$.} Let $\zeta$ be a Poisson process with {a finite} intensity measure $\bM$. Further, suppose that we are given a measurable mapping $\bx \mapsto S_{\bx}$ from $\X^k$ to $\cF$ satisfying {$\bx \subset S_{\bx}$.} Define 
\[
\tg(\bx,\omega) := g(\bx,\omega)\ind\set{\cS(\bx,\omega) \subset S_{\bx}}
\] 
{for $\omega \in \bN_{\X}$ }, and
\[
{\xi_{\mathrm{tr}} :=\xi_{\mathrm{tr}}[\eta] := \frac{1}{k!}\sum_{\bx \in \eta^k_{\ne}}\tilde g(\bx,\eta)\delta_{f(\bx,\eta)}.}
\]
Then
\begin{align*}
\dkr(\xi,\zeta)  &\le d_{TV}(\bL,\bM) + 2\param{\varx{\xi_{\mathrm{tr}}(\Y)} - \meanx{\xi_{\mathrm{tr}}(\Y)} } + E_1 + 2E_2\\
&=  d_{TV}(\bL,\bM) + E_1 + E_2 + E_3 + E_4
\end{align*}
with
\[
\begin{split}
E_1 & {:=} \frac{2}{k!} \int_{\X^k} \mean{g(\bx,\eta+\delta_{\bx}) \ind\set{\cS(\bx,\eta+\delta_{\bx})\not\subset S_{\bx}}} \bK^k(\md\bx),\\ 
E_2 & {:=} \frac{2}{(k!)^2}  \int_{\X^{k}} \int_{\X^{k}} \ind\set{S_{\bx}\cap S_{\bz}\ne \emptyset} \mean{\tg(\bx,\eta+\delta_{\bx})} \mean{\tg(\bz,\eta+\delta_{\bz})} \bK^k(\md \bz)\bK^k(\md \bx),\\
E_3 & {:=} \frac{2}{(k!)^2}  \int_{\X^{k}} \int_{\X^{k}} \ind\set{S_{\bx}\cap S_{\bz}\ne \emptyset} \mean{\tg(\bx,\eta+\delta_{\bx}+\delta_{\bz})\tg(\bz,\eta+\delta_{\bx}+\delta_{\bz})}  \bK^k(\md\bz)\bK^k(\md\bx) ,\\
E_4 & {:=} \frac{2}{k!}\sum_{ \emptyset\subsetneq I \subsetneq {\{1,\hdots,k\}}} \frac{1}{(k-|I|)!} \int_{\X^k}\int_{\X^{k-|I|}}\mean{ \tg(\bx, \eta+\delta_{\bx}+\delta_{\bz}) \tg((\bx_{I},\bz),\eta+\delta_{\bx}+\delta_{\bz})}   \bK^{k-|I|}(\md\bz) \bK^k(\md\bx),
\end{split}
\]
{where for $I = \{i_1,\ldots, i_m\}$ we set $\bx_I = (x_{i_1},\ldots, x_{i_m})$, and $(\bx_I,\bz) = (x_{i_1},\ldots, x_{i_m}, z_1,\ldots, z_{k-m})$.}
\end{thm}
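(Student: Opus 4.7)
The strategy is to decouple via the triangle inequality: letting $\zeta_{\mathrm{tr}}$ be a Poisson process on $\Y$ with the intensity measure $\bL_{\mathrm{tr}}$ of $\xi_{\mathrm{tr}}$,
\[\dkr(\xi,\zeta) \leq \dkr(\xi, \xi_{\mathrm{tr}}) + \dkr(\xi_{\mathrm{tr}}, \zeta_{\mathrm{tr}}) + \dkr(\zeta_{\mathrm{tr}}, \zeta).\]
Since $\xi_{\mathrm{tr}} \subset \xi$ pathwise, the first summand is bounded by $\mean{(\xi - \xi_{\mathrm{tr}})(\Y)} = E_1/2$ via the multivariate Mecke formula \eqref{e:mmecke}; and \eqref{e:dkrPoisson0} combined with the triangle inequality for $d_{TV}$ bounds the third by $d_{TV}(\bL, \bM) + E_1/2$. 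Hence it suffices to establish $\dkr(\xi_{\mathrm{tr}}, \zeta_{\mathrm{tr}}) \leq E_2 + E_3 + E_4$.

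For this, I would follow the generator approach from the proof of Theorem~\ref{t:KR_coupling}, now adapted to $\xi_{\mathrm{tr}}$. Applying \eqref{e:mmecke} separately to the two terms in the generator \eqref{d:gldyn} (introducing an independent copy $\eta'$ of $\eta$ in the first) yields $\mean{\cL_{\mathrm{tr}} h(\xi_{\mathrm{tr}})} = \frac{1}{k!}\int_{\X^{k}} \Delta(\bx)\,\bK^{k}(\md \bx)$ with
\[\Delta(\bx) = \mean{\tg(\bx,\eta'+\delta_{\bx})\, D_{f(\bx,\eta'+\delta_{\bx})}h(\xi_{\mathrm{tr}}[\eta])} - \mean{\tg(\bx,\eta+\delta_{\bx})\, D_{f(\bx,\eta+\delta_{\bx})}h(\xi_{\mathrm{tr}}[\eta+\delta_{\bx}] - \delta_{f(\bx,\eta+\delta_{\bx})})}.\]
The key construction is a \emph{stabilization coupling}: define $\tilde\eta := \eta|_{\X \setminus S_\bx} + \check\eta$, where $\check\eta$ is a Poisson process on $S_\bx$ of intensity $\bK|_{S_\bx}$, independent of $\eta$. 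Then $\tilde\eta \stackrel{d}{=} \eta$, and by the localization hypothesis \eqref{e:AssumptionRi} together with the stopping set property, both $\tg(\bx, \tilde\eta + \delta_\bx)$ and $f(\bx, \tilde\eta + \delta_\bx)$ (on $\{\tg = 1\}$) depend on $\tilde\eta$ only through $\check\eta$, which is independent of $\eta$. This allows replacing $\eta'$ by $\tilde\eta$ in the first term of $\Delta(\bx)$ without changing the expectation. Moreover, since $\tilde\eta \stackrel{d}{=} \eta$, the second term equals the corresponding expression with $\eta$ replaced by $\tilde\eta$, and a telescoping produces
\[\Delta(\bx) = \mean{\tg(\bx,\tilde\eta+\delta_{\bx})\bigl(D_{f(\bx,\tilde\eta+\delta_{\bx})}h(\xi_{\mathrm{tr}}[\eta]) - D_{f(\bx,\tilde\eta+\delta_{\bx})}h(\xi_{\mathrm{tr}}[\tilde\eta+\delta_{\bx}] - \delta_{f(\bx,\tilde\eta+\delta_{\bx})})\bigr)}.\]
Using $|D_y h(\omega_1) - D_y h(\omega_2)| \leq 2(\omega_1 \triangle \omega_2)(\Y)$ for $h \in \cLip(\Y)$, this is bounded by $2\mean{\tg(\bx,\tilde\eta+\delta_{\bx})\,(\xi_{\mathrm{tr}}[\eta] \triangle (\xi_{\mathrm{tr}}[\tilde\eta+\delta_{\bx}] - \delta_{f(\bx,\tilde\eta+\delta_{\bx})}))(\Y)}$.

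The final step is to control this symmetric difference using localization. For any $k$-tuple $\bz$ with $S_\bz \cap S_\bx = \emptyset$, the hypothesis $\bz \subset S_\bz$ forces $\bz \cap S_\bx = \emptyset$, so $\eta$ and $\tilde\eta + \delta_\bx$ agree on $S_\bz$; hence $\bz$ contributes identically to $\xi_{\mathrm{tr}}[\eta]$ and $\xi_{\mathrm{tr}}[\tilde\eta + \delta_\bx]$. The symmetric difference is therefore bounded by the sum over $\bz \in \eta^k_{\neq}$ with $S_\bz \cap S_\bx \neq \emptyset$ weighted by $\tg(\bz, \eta)$, plus the sum over $\bz \in (\tilde\eta + \delta_\bx)^k_{\neq}$ with $S_\bz \cap S_\bx \neq \emptyset$ that are not permutations of $\bx$, weighted by $\tg(\bz, \tilde\eta + \delta_\bx)$. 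In the first sum, the independence of $\tg(\bx, \tilde\eta+\delta_\bx)$ from $\eta$ factorises the expectation, and one application of \eqref{e:mmecke} produces $E_2/2$. For the second sum, split by $m := |\{i : z_i \in \{x_1,\ldots,x_k\}\}| \in \{0,\ldots,k-1\}$: the case $m = 0$ (with $\bz \subset \tilde\eta$) yields $E_3/2$ (using $\tilde\eta \stackrel{d}{=} \eta$), while the cases $0 < m < k$ yield the $E_4$ summands indexed by $I \subsetneq \{1,\ldots,k\}$ with $|I| = m$, the factor $\frac{1}{(k-|I|)!}$ arising from the multivariate Mecke formula applied to the $k-|I|$ free coordinates of $\bz$. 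Combining with the semigroup Lipschitz estimate \eqref{e:pslip} and $\int_0^\infty e^{-s}\md s = 1$ completes the bound. The main obstacle is the delicate combinatorial bookkeeping of the $E_4$ terms---consistently translating orderings of shared and free coordinates into the single index $I$ via the symmetry of $\tg$---and the equivalent first form of the bound follows from the algebraic identity $2(\var{\xi_{\mathrm{tr}}(\Y)} - \mean{\xi_{\mathrm{tr}}(\Y)}) = E_3 + E_4 - E_2$, itself obtained from the second factorial moment expansion of $\xi_{\mathrm{tr}}(\Y)$ (where the contribution from disjoint stabilization regions cancels by conditional independence).
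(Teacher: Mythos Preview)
Your proposal is correct and follows essentially the same approach as the paper: the paper also first reduces via the triangle inequality and the truncation error $E_1$ to the bounded-range case $\xi_{\mathrm{tr}}$ versus $\zeta_{\mathrm{tr}}$, and then exploits Poisson spatial independence to cancel the two terms in $\Delta(\bx)$ through a common ``far-field'' piece. The only cosmetic difference is that the paper introduces the intermediate process $\xi_{\bx}[\eta]=\frac{1}{k!}\sum_{\bz:\,S_{\bz}\cap S_{\bx}=\emptyset}\tg(\bz,\eta)\delta_{f(\bz,\eta)}$ and uses the independence of $\eta\cap S_{\bx}$ and $\eta\cap S_{\bx}^c$ directly to match the two bridge terms, whereas you realise the same independence via the resampling $\tilde\eta=\eta|_{S_{\bx}^c}+\check\eta$; note that your stated ``$E_2/2$'' and ``$E_3/2$'' should read $E_2$ and $E_3$ (the factor $2$ from the Lipschitz bound on $D_yh$ is already absorbed in the definitions of the $E_i$), but this does not affect the argument.
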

Note that the bound is similar to that in \cite[Theorem 3.3]{Otto2020} while it holds for a more general class of point processes  under the stronger KR distance {and  for a {more} general notion of `localizing region' than balls}.  The proof of  \cite[Theorem 3.3]{Otto2020} proceeds by constructing a coupling between $\xi$ and $\xi^{!}_{x}$ and then using \cite[Theorem 2.6]{Barbour1992} (the total variation distance analogue of our Theorem \ref{t:gen_Poisson_approx}). The proof here is much simpler -- we use the canonical truncation via $S_{\bx}$ to reduce the proof to Poisson approximation bounds for truncated point processes with bounded range of dependence.  We first prove the bound for the truncated point process and then use this result to  complete the proof of Theorem \ref{t:qldep}.

\begin{proof}
{As before, we assume that $\bL=\bM$, and use \eqref{e:dkrPoisson} otherwise.   The proof will proceed in two steps as described above.}

\noindent {\bf Step 1 : The case of bounded range of dependence.}
We first assume that for all $\bx\in \X^k$ and $\omega\in \bN_{\X}$ we have that $\cS(\bx,\omega) \subset S_{\bx}$. {This implies that $\tilde{g}=g$, $\xi = \xi_{\mathrm{tr}}$ and $E_1=0$.} {Moreover, \eqref{e:AssumptionRi} leads to}
\begin{equation}\label{e:AssumptionRk}
g(\bx,\omega)=g(\bx,\omega\cap S_{\bx}) \quad \text{and} \quad {\text{if} \quad g(\bx,\omega)=1,} \quad f(\bx,\omega)=f(\bx,\omega\cap S_{\bx}).
\end{equation}
Fix $h\in\cLip (\Y)$. Similarly to the derivation in \eqref{e:exp_lhphi}, we use \eqref{e:int_L_ld} to obtain that  
\[
\splitb
\mean{\cL h(\xi)}  &=\frac{1}{k!}  \int_{\X^k} \mean{g(\bx,\eta'+  \delta_{\bx})D_{f(\bx,\eta'+\delta_\bx)}[h(\xi[\eta])]} \bK^k(\md \bx)\\
& \quad - \frac{1}{k!}\int_{\X^k} \mean{g(\bx,\eta+  \delta_{\bx})D_{f(\bx,\eta+\delta_\bx)}\sbrk{h(\xi[\eta +  \delta_{\bx}] - \delta_{f(\bx,\eta+\delta_{\bx})})}} \bK^k(\md \bx),
\splite
\]
where $\eta'$ is a Poisson process, independent of $\eta$ and $\eta'\stackrel{d}{=} \eta$.
Integrating over $s$ as in \eqref{e:int_bound}, yields
\begin{equation}\label{e:stein_local}
\splitb
\mean{h(\zeta)} - \mean{h(\xi)}
& =\frac{1}{k!} \int_0^\infty \int_{\X^k} \left(\mean{g(\bx,\eta'+\delta_{\bx}) D_{f(\bx,\eta'+\delta_{\bx})}[P_sh(\xi[\eta])] }\right. \\
& \left.\quad - \mean{g(\bx,\eta+\delta_{\bx}) D_{f(\bx,\eta+\delta_{\bx})}[P_sh(\xi[\eta+\delta_{\bx}]-\delta_{f(\bx,\eta+\delta_{\bx})})] }\right) \bK^k(\md\bx) \md s.
\splite
\end{equation}
For $\bx\in\X^k$  define
\begin{equation}
\label{e:xix}
\splitb
\xi_{\bx} := \xi_{\bx}[\eta] &= \frac{1}{k!}\sum_{\bz\in \eta_{\neq}^k: S_{\bz}\cap S_{\bx} = \emptyset }g(\bz,\eta) \delta_{f(\bz,\eta)} = \frac{1}{k!}\sum_{\bz\in \eta_{\neq}^k: S_{\bz}\cap S_{\bx} = \emptyset }g(\bz,\eta+\delta_{\bx}) \delta_{f(\bz,\eta+\delta_{\bx})} \\
&= \frac{1}{k!}\sum_{\bz \in (\eta \cap  S_{\bx}^c)_{\neq}^k: S_{\bz}\cap S_{\bx} = \emptyset }g(\bz,\eta \cap S_{\bx}^c) \delta_{f(\bz,\eta \cap S_{\bx}^c)},
\splite
\end{equation}
where the second and third equalities follow from the assumption \eqref{e:AssumptionRk} on $f$ and $g$, and the fact that ${\bz} \subset S_{\bz}\subset S_{\bx}^c$. Now, it follows from \eqref{e:pslip} that 
\begin{align}
& \big| \mean{g(\bx,\eta'+\delta_{\bx}) D_{f(\bx,\eta'+\delta_{\bx})}[P_sh(\xi[\eta])] } - \mean{g(\bx,\eta'+\delta_{\bx}) D_{f(\bx,\eta'+\delta_{\bx})}[P_sh(\xi_{\bx}[\eta])] } \big| \no  \\
& \leq \frac{2}{k!}e^{-s} \mean{ g(\bx,\eta'+\delta_{\bx}) \sum_{\bz\in \eta^k_{\ne} : S_{\bz}\cap S_{\bx}\ne \emptyset} g(\bz,\eta)} \no \\
\label{e:localdepbd1} & \leq \frac{2}{k!}e^{-s} \int_{\X^k}\ind\set{S_{\bz}\cap S_{\bx}\ne \emptyset} \mean{g(\bx,\eta+\delta_{\bx})} \mean{g(\bz,\eta+\delta_{\bz})} \bK^k(\md\bz).
\end{align}
Observe that by definition \eqref{e:xix} we have $\xi_{\bx}[\eta] \subset \xi[\eta+\delta_{\bx}]-g(\bx,\eta+\delta_{\bx})\delta_{f(\bx,\eta+\delta_{\bx})}$ as multisets. Thus, we can define the point process
$$\hat\xi_{\bx} := \hat\xi_{\bx}[\eta] :=  \xi[\eta+\delta_{\bx}]-g(\bx,\eta+\delta_{\bx})\delta_{f(\bx,\eta+\delta_{\bx})} - \frac{1}{k!}\sum_{\bz\in \eta^k_{\ne}} g(\bz,\eta+\delta_{\bx})\delta_{f(\bz,\eta+\delta_{\bx})}.$$
In other words, $\hat\xi_{\bx}[\eta]$ contains the {points of} $\xi[\eta+\delta_{\bx}]$ that are generated by nonempty strict  subsets of $\bx$. 
Using \eqref{e:pslip} and {the} Mecke formula, we derive that
\begin{align}
& \big|\mean{g(\bx,\eta+\delta_{\bx}) D_{f(\bx,\eta+\delta_{\bx})}[P_sh(\xi[\eta+\delta_{\bx}]-\delta_{f(\bx,\eta+\delta_{\bx})})] } 
- \mean{g(\bx,\eta+\delta_{\bx}) D_{f(\bx,\eta+\delta_{\bx})}[P_sh(\xi_{\bx}[\eta])] } \big| \no  \\
& \le 2e^{-s} \param{\mean{ g(\bx,\eta+\delta_{\bx}) \frac{1}{k!}\sum_{\bz\in \eta^k_{\ne}:S_{\bz}\cap S_{\bx}\ne \emptyset} g(\bz,\eta+\delta_{\bx})} + \mean{g(\bx,\eta+\delta_{\bx})\hat\xi_{\bx}[\eta](\Y)}} \no \\
& \leq \frac{2}{k!}e^{-s} \int_{\X^k}\ind\set{S_{\bz}\cap S_{\bx}\ne \emptyset}\mean{g(\bx,\eta+\delta_{\bx}+\delta_{\bz}) g(\bz,\eta+\delta_{\bx}+\delta_{\bz}) } \bK^k(\md\bz) \no \\
\label{e:localdepbd2}  & \quad +
 2e^{-s}\mean{g(\bx,\eta+\delta_{\bx})\hat\xi_{\bx}[\eta](\Y)}.
\end{align}
By assumption \eqref{e:AssumptionRk}, $f(\bx,\eta+\delta_{\bx})$ and $g(\bx,\eta+\delta_{\bx})$ depend only on $\eta\cap S_{\bx}$, while $\xi_{\bx}$ is a functional of $\eta\cap S_{\bx}^c$ (see \eqref{e:xix}). Together with the independence property of Poisson processes, we obtain that
\begin{equation}
\label{e:localdepbd3}
\mean{g(\bx,\eta'+\delta_{\bx}) D_{f(\bx,\eta'+\delta_{\bx})}P_sh(\xi_{\bx}[\eta]) } = \mean{g(\bx,\eta+\delta_{\bx}) D_{f(\bx,\eta+\delta_{\bx})}P_sh(\xi_{\bx}[\eta]) }.
\end{equation}
Substituting \eqref{e:localdepbd1}, \eqref{e:localdepbd2} and \eqref{e:localdepbd3} into \eqref{e:stein_local}, using the triangle inequality and integrating over $s$, yields
\begin{align*}
& \big|\mean{h(\zeta)} - \mean{h(\xi)}\big| \le \frac{2}{(k!)^2}\int_{\X^k} \int_{\X^k}\ind\set{S_{\bz}\cap S_{\bx}\ne \emptyset} \mean{g(\bx,\eta+\delta_{\bx})} \mean{g(\bz,\eta+\delta_{\bz})} \bK^k(\md\bz) \bK^k(\md\bx) \\
&\  \ \quad + \frac{2}{(k!)^2}\int_{\X^k} \int_{\X^k}\ind\set{S_{\bz}\cap S_{\bx}\ne \emptyset} \mean{g(\bx,\eta+\delta_{\bx}+\delta_{\bz}) g(\bz,\eta+\delta_{\bx}+\delta_{\bz}) } \bK^k(\md\bz) \bK^k(\md\bx)\\
&\  \ \quad + \frac{2}{k!} \int_{\X^k}   \mean{ g(\bx,\eta+\delta_{\bx})\hat\xi_{\bx}[\eta](\Y)} \bK^k(\md\bx).
\end{align*}
Here, the first and the second term on the right-hand side are $E_2$ and $E_3$, respectively.
Notice that 
\[
\splitb
	  &\frac{2}{k!}\int_{\X^k}\mean{ g(\bx,\eta+\delta_{\bx})\hat\xi_{\bx}[\eta](\Y)} \bK^k(\md\bx)\\
	&\ = \frac{2}{k!}\sum_{ \emptyset\subsetneq I \subsetneq {\{1,\hdots,k\}}} \frac{1}{(k-|I|)!}
	\int_{\X^k}\int_{\X^{k-|I|}}\mean{ g(\bx, \eta+\delta_{\bx}+\delta_{\bz}) g((\bx_{I},\bz),\eta+\delta_{\bx}+\delta_{\bz})} \bK^{k-|I|}(\md\bz) \bK^k(\md\bx),
\splite
\]
which is equal to $E_4$.
Together with $E_1=0$, this proves the second bound in the theorem. A short computation shows that
\begin{align*}
E_4 = 2\mean{\xi(\Y)^2} - 2\mean{\xi(\Y)} - 2\param{\frac{1}{k!}}^2 \int_{\X^k}\int_{\X^{k}}\mean{ g(\bx, \eta+\delta_{\bx}+\delta_{\bz}) g(\bz,\eta+\delta_{\bx}+\delta_{\bz})} \bK^{k}(\md\bz) \bK^k(\md\bx).
\end{align*}
From \eqref{e:AssumptionRk} and the independence property of $\eta$ it follows that
\begin{align*}
& 2 \param{\frac{1}{k!}}^2 \int_{\X^k}\int_{\X^{k}}\mean{ g(\bx, \eta+\delta_{\bx}+\delta_{\bz}) g(\bz,\eta+\delta_{\bx}+\delta_{\bz})} \bK^{k}(\md\bz) \bK^k(\md\bx) \\
& = 2 \mean{\xi(\Y)}^2 + 2 \param{\frac{1}{k!}}^2 \int_{\X^k}\int_{\X^{k}}\mean{ g(\bx, \eta+\delta_{\bx}+\delta_{\bz}) g(\bz,\eta+\delta_{\bx}+\delta_{\bz})} \\
& \qquad \qquad \qquad \qquad \qquad \qquad \qquad - \mean{ g(\bx, \eta+\delta_{\bx}+\delta_{\bz})} \mean{g(\bz,\eta+\delta_{\bx}+\delta_{\bz})} \bK^k(\md\bz)  \bK^k(\md\bx) \\
& = 2 \mean{\xi(\Y)}^2 + 2 \param{\frac{1}{k!}}^2 \int_{\X^k}\int_{\X^{k}} \ind\set{S_{\bx}\cap S_{\by}\neq\emptyset }  \big( \mean{ g(\bx, \eta+\delta_{\bx}+\delta_{\bz}) g(\bz,\eta+\delta_{\bx}+\delta_{\bz})} \\
& \qquad   \qquad \qquad \qquad \qquad \qquad \qquad -  \mean{ g(\bx, \eta+\delta_{\bx}+\delta_{\bz})} \mean{g(\bz,\eta+\delta_{\bx}+\delta_{\bz})} \big) \bK^k(\md\bz) \bK^k(\md\bx) \\
& = 2 \mean{\xi(\Y)}^2 + E_3- E_2.
\end{align*}
Combining the previous identities yields that
$$
E_4 = 2\param{\varx{\xi(\Y)} - \meanx{\xi(\Y)} } + E_2 -E_3,
$$
which proves the first bound in the theorem.

{\bf Step 2 : The general case.} We will use Step 1 to complete the proof of the general result in the {theorem.}

{We denote by $\bL_{\mathrm{tr}}$ the intensity measure of the truncated point process $\xi_{\mathrm{tr}}$, which was defined in the statement of the theorem. It follows from the multivariate Mecke equation \eqref{e:mmecke} that
$$
\bL_{\mathrm{tr}}(\md y) = \frac{1}{k!}\int_{\X^k} \mean{g(\bx,\eta+\delta_{\bx}) \ind\set{\cS(\bx,\eta+\delta_{\bx}) \subset S_{\bx}} \ind\set{f(\bx,\eta+\delta_{\bx})\in \md y}} \bK^k(\md\bx).
$$
Let $\zeta_{\mathrm{tr}}$ be a Poisson process with intensity measure $\bL_{\mathrm{tr}}$. From the triangle inequality for the KR distance along with \eqref{e:dtvbd} and \eqref{e:dkrPoisson0} we obtain
\begin{align}
\dkr(\xi,\zeta) \leq \dkr(\xi,\xi_{\mathrm{tr}}) +  \dkr(\xi_{\mathrm{tr}},\zeta_{\mathrm{tr}}) + \dkr(\zeta,\zeta_{\mathrm{tr}}) \leq \mean{d_{TV}(\xi,\xi_{\mathrm{tr}})} +   \dkr(\xi_{\mathrm{tr}},\zeta_{\mathrm{tr}}) + d_{TV}(\bL,\bL_{\mathrm{tr}}) \no.
\end{align}
Since  $\bL_{\mathrm{tr}}(A) \leq \bL(A)$ and $\xi_{\mathrm{tr}}(A) \leq \xi(A)$ for all $A\in\cY$, and using \eqref{e:int_L_ld}, we have
$$
d_{TV}(\bL,\bL_{\mathrm{tr}}) = \bL(\Y) - \bL_{\mathrm{tr}}(\Y) = \frac{1}{k!}\int_{\X^k} \mean{g(\bx,\eta+\delta_{\bx}) \ind\set{\cS(\bx,\eta+\delta_{\bx}) \not\subset S_{\bx}}}  \bK^k(\md\bx) = \frac{E_1}{2},
$$
and
$$
\mean{d_{TV}(\xi,\xi_{\mathrm{tr}})} = \mean{ \xi(\Y) - \xi_{\mathrm{tr}}(\Y) } = \bL(\Y) - \bL_{\mathrm{tr}}(\Y) = \frac{E_1}{2}.
$$
Thus, we have shown that
\begin{equation}\label{e:hbd_trunc}
\dkr(\xi,\zeta) \leq \dkr(\xi_{\mathrm{tr}},\zeta_{\mathrm{tr}}) + E_1.
\end{equation}}
Recall that $\tg(\bx,\eta+\delta_{\bx}) := g(\bx,\eta+\delta_{\bx})\ind\set{\cS(\bx,\eta+\delta_{\bx}) \subset S_{\bx}} $, and observe that by the stopping set property of $\cS$, both {$f$ and $\tg$ satisfy} the local dependence assumptions in \eqref{e:AssumptionRk}. Hence,  using Step 1 of the proof,  we can derive the necessary bounds for $\dkr(\xi_{\mathrm{tr}},\zeta_{\mathrm{tr}})$ in \eqref{e:hbd_trunc} and complete the proof.
\end{proof}

\section{Stabilization for binomial input}
\label{s:locdep_binomial}

 In this section we focus on the binomial {point} process $\beta_n$  (replacing the Poisson process), i.e., a point process consisting of $n$ independent points, distributed according to some probability measure $\mathbf{Q}$. In this case, we have that for any measurable function {$h: \X^k \times \bN_{\X}\to\R_+$} and $n\geq k$,
\begin{equation}\label{e:IdentityBinomial}
{\mean{\sum_{\bx \in \beta^k_{n,\ne}} h(\bx,\beta_n)} = (n)_k \int_{\X^k}\mean{h(\bx,\beta_{n-k}+\delta_{\bx})}\mathbf{Q}^k(\md \bx),}
\end{equation}
with $(n)_k:=n\cdot\hdots\cdot (n-k+1)$. Note that \eqref{e:IdentityBinomial} is the analogue for binomial point processes of the multivariate Mecke formula \eqref{e:mmecke}.

Let $\beta_n$ be a binomial point process, and let
$$
\xi:= { \xi[\beta_n] :=} \sum_{x\in\beta_n} g(x,\beta_n) \delta_{f(x,\beta_n)}.
$$
{Here, $g$ and $f$ are as in Section \ref{s:locdep} for an underlying Poisson process, but we allow only $k=1$ in the sequel.} By \eqref{e:IdentityBinomial}, the point process $\xi$ has an intensity measure
\begin{equation}\label{e:IntensityMeasureBinomial}
\bL( {\md y})= n \int_{\X} \mean{g(x,\beta_{n-1}+\delta_x) \ind\{f(x,\beta_{n-1}+\delta_x)\in \md y\}} \mathbf{Q}(\md x).
\end{equation}

\begin{thm}\label{thm:binomial}
Let $\xi$ and $\bL$ be as above with $f,g$ satisfying \eqref{e:AssumptionRi} {with a measurable function $\cS: \X\times\bN_{\X}\to\cF$} and let $\zeta$ be a Poisson process with a finite intensity measure $\bM$. Further, suppose that we are given a measurable mapping $x \mapsto S_{x}$ from $\X$ to $\cF$ satisfying $x \in S_{x}$ and $\mathbf{Q}(S_x)<1$ for all $x\in\X$.    For $\omega \in \bN_{\X}$ and $x \in \omega$, define 
\[
\tg(x,\omega) := g(x,\omega)\ind\set{\cS(x,\omega) \subset S_{x}}.
\] 
Then
\begin{align*}
\dkr(\xi,\zeta) & \leq d_{TV}(\bL,\bM)  + E_1 + E_2 + E_3 + E_4 + E_5 + E_6,
\end{align*}
where
\begin{align*}
E_1 &:= 2n\int_{\X} \mean{g(x,\beta_{n-1}+\delta_{x})\ind\set{\cS(x,\beta_{n-1}+\delta_{x})\not \subset S_{x}}}\mathbf{Q}(\md x),\\
E_2 &:= 2n^2 \int_{\mathbb{X}^2} \ind\{S_x\cap S_y \neq \emptyset \} \mean{\tg(x,\beta_{n-1}+\delta_x)} \mean{\tg(y,\beta_{n-1}+\delta_y)} \mathbf{Q}^2(\md(x,y)),  \\
E_3 &:= 2n^2 \int_{\mathbb{X}^2} \ind\{S_x\cap S_y \neq \emptyset \} \mean{\tg(x,\beta_{n-2}+\delta_x+\delta_y) \tg(y,\beta_{n-2}+\delta_x+\delta_y)} \mathbf{Q}^2(\md(x,y)), 
\end{align*}
\begin{align*}
E_4 &:= 2n\int_{\mathbb{X}} \bigg( (1+n\mathbf{Q}(S_x)) \mean{\tg(x,\beta_{n-1}+\delta_x)} + n \int_{S_x} \mean{\tg(x,\beta_{n-2}+\delta_x+\delta_z)} \mathbf{Q}(\md z)  \bigg)\\
& \qquad \quad \times \bigg( \int_{\mathbb{X}} \ind\{S_x\cap S_y=\emptyset\} \mean{\tg(y,\widetilde{\beta}_{x,n-1}+\delta_y)} \mathbf{Q}_x(\md y)\\
& \qquad \qquad  + n \int_{\mathbb{X}^2} \ind\{S_x\cap S_{y_1}=\emptyset, y_2\in S_{y_1}\} \mean{\tg(y_1,\widetilde{\beta}_{x,n-2}+\delta_{y_1}+\delta_{y_2}} \mathbf{Q}^2_x(\md(y_1,y_2)) \bigg) \mathbf{Q}(\md x),\\
E_5 &:= 2n^3 \int_{\mathbb{X}^2} \ind\{S_x\cap S_y=\emptyset\} \mean{\tg(x, \beta_{n-2}+\delta_x) } \mean{ \tg(y, \beta_{n-2}+\delta_y) } \mathbf{Q}(S_x) \mathbf{Q}_x(S_y) \mathbf{Q}^2(\md(x,y)), \\
E_6 &:= { 2n^2 \int_{\mathbb{X}^2} \ind\{S_x\cap S_y=\emptyset\} \mean{ \tg(x,\beta_{n-2}+\delta_x) \tg(y,\beta_{n-2}+\delta_y) } \mathbf{Q}_x(S_y) \mathbf{Q}^2(\md(x,y)) } \\
& \quad \ { + 2n^3 \int_{\mathbb{X}^2} \int_{S_x} \ind\{S_x\cap S_y=\emptyset\} \mean{ \tg(x,\beta_{n-3}+\delta_x+\delta_z) \tg(y,\beta_{n-3}+\delta_y) } \mathbf{Q}_x(S_y)  \mathbf{Q}(\md z) \mathbf{Q}^2(\md(x,y)) , }
\end{align*}
and where  $\widetilde{\beta}_{x,m}$ is a binomial point process of $m$ independent points distributed according to $\mathbf{Q}_x(\cdot):=\mathbf{Q}(\cdot\cap S_x^c)/\mathbf{Q}(S_x^c)$.
\end{thm}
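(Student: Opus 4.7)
The proof follows the two-stage pattern of Theorem \ref{t:qldep}, with the crucial new feature that the lack of spatial independence for a binomial point process must be compensated by an explicit \emph{resampling coupling} between $\beta_n$ and the surrogate binomial process $\widetilde{\beta}_{x,m}$, which lives entirely in $S_x^c$. In the first stage I would repeat the truncation argument verbatim: letting $\bL_{\mathrm{tr}}$ be the intensity measure of $\xi_{\mathrm{tr}}:=\sum_{x\in\beta_n}\tg(x,\beta_n)\delta_{f(x,\beta_n)}$ and $\zeta_{\mathrm{tr}}$ a Poisson process with intensity $\bL_{\mathrm{tr}}$, the binomial Mecke identity \eqref{e:IdentityBinomial} gives $d_{TV}(\bL,\bL_{\mathrm{tr}})=\mean{d_{TV}(\xi,\xi_{\mathrm{tr}})}=E_1/2$, so the triangle inequality combined with \eqref{e:dkrPoisson0} reduces everything to bounding $\dkr(\xi_{\mathrm{tr}},\zeta_{\mathrm{tr}})$.

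For the main step I would apply the generator representation \eqref{e:stein}, so it suffices to control $|\mean{\cL P_s h(\xi_{\mathrm{tr}})}|$ uniformly in $s$ and integrate against $e^{-s}\md s$. Using the definition of $\cL$ from \eqref{d:gldyn} together with two applications of \eqref{e:IdentityBinomial}, $\mean{\cL h(\xi_{\mathrm{tr}})}$ equals $n$ times the integral over $x\in\X$ of the difference between $\mean{\tg(x,\beta'_{n-1}+\delta_x)D_{f(x,\beta'_{n-1}+\delta_x)}h(\xi_{\mathrm{tr}}[\beta_n])}$ and $\mean{\tg(x,\beta_{n-1}+\delta_x)D_{f(x,\beta_{n-1}+\delta_x)}h(\xi_{\mathrm{tr}}[\beta_{n-1}+\delta_x]-\delta_{f(x,\beta_{n-1}+\delta_x)})}$, where $\beta'_{n-1}$ is an independent copy of $\beta_{n-1}$. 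I would compare both expressions with a common surrogate $h(\xi_x)$, where $\xi_x$ is built from $\widetilde{\beta}_{x,n-1}$ (or $\widetilde{\beta}_{x,n-2}$) by keeping only those atoms $y$ with $S_y\cap S_x=\emptyset$. Because $\widetilde{\beta}_{x,\cdot}$ is supported in $S_x^c$, the stopping set property of $\cS$ together with \eqref{e:AssumptionRi} implies that $\xi_x$ is independent of the random pair $(\tg(x,\cdot),f(x,\cdot))$ appearing in each integrand, and this independence is what eventually produces the cancellation of the main term.

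To realise the substitution I would employ the standard resampling coupling: sample the underlying $n$ (respectively $n-1$) i.i.d.\ points from $\mathbf{Q}$, keep those falling in $S_x^c$, and replace those lying in $S_x$ by independent samples from $\mathbf{Q}_x$. The resulting discrepancy between each integrand and its surrogate is controlled via the Lipschitz bound \eqref{e:pslip} times the expected number of atoms of $\xi_{\mathrm{tr}}$ that are either resampled or fail $S_y\cap S_x=\emptyset$. Iterated applications of \eqref{e:IdentityBinomial} rewrite these error counts as integrals against $\mathbf{Q}$, producing the indicator $\ind\{S_x\cap S_y\ne\emptyset\}$ contributions $E_2$ and $E_3$ exactly as in the Poisson case, while the coupling error itself contributes $E_4$, $E_5$ and $E_6$: each point in $S_x$ that is either resampled, or whose presence perturbs $\tg$ at a nearby $y$, carries an extra factor of $n\mathbf{Q}(S_x)$ or $\mathbf{Q}_x(S_y)$, which together with one further Mecke step explains the higher powers of $n$ and the mixed integrals against $\mathbf{Q}_x^2$ in the statement.

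The main obstacle is the bookkeeping for this coupling error, since the insertion $\beta_{n-1}\mapsto\beta_{n-1}+\delta_x$ and the resampling on $S_x$ do not commute cleanly. Disentangling the three distinct sources of error---the direct miscount of atoms of $\xi_{\mathrm{tr}}$ with stopping sets meeting $S_x$, the resampling of secondary points lying in $S_x$, and the correlation between the presence of the atom at $x$ and the total count of points inside $S_x$ in a fixed-size sample---is what forces the splitting into $E_4$, $E_5$ and $E_6$. Once each contribution is identified and paired with the correct indicator and $\mathbf{Q}(S_x)$ or $\mathbf{Q}_x(S_y)$ factor, the stated bound follows by combining \eqref{e:pslip} with the appropriate number of iterations of \eqref{e:IdentityBinomial}.
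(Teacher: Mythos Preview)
Your proposal is correct and follows essentially the same route as the paper: truncation via $\tilde g$ to absorb $E_1$, the generator representation with an independent copy $\beta'_{n-1}$, and comparison of both integrands with a surrogate $\xi_x$ that discards atoms $y$ with $S_y\cap S_x\ne\emptyset$. The paper organises the main step as a triple split $T_{1,x,s}+T_{2,x,s}+T_{3,x,s}$: the first two replace $\xi[\beta_n]$ and $\xi[\beta_{n-1}+\delta_x]-\delta_{f(x,\cdot)}$ by $\xi_x[\beta_n]$ and $\xi_x[\beta_{n-1}]$ respectively (yielding $E_2$ and $E_3$ exactly as in the Poisson case), and all binomial-specific work sits in $T_3$. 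There the paper does not introduce an explicit resampling coupling as you propose, but instead invokes the distributional identity $\xi_x[\beta_m]\stackrel{d}{=}\xi_x[\widetilde\beta_{x,m-N_m(x)}]$ with $N_m(x):=\beta_m(S_x)$, passes to the common intermediate $\xi_x[\widetilde\beta_{x,n}]$ via the triangle inequality, and bounds each symmetric difference $\xi_x[\widetilde\beta_{x,n-\ell}]\triangle\xi_x[\widetilde\beta_{x,n}]$ by a pointwise three-term inequality (their \eqref{e:BoundXiX}); this produces six auxiliary quantities $U_{1,x},\dots,U_{6,x}$ that regroup into $E_4$, $E_5$, $E_6$. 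Your resampling description and the paper's conditional-count argument are two phrasings of the same mechanism, and the dependence you flag between $g(x,\beta_{n-1}+\delta_x)$ and the count $N_{n-1}(x)$ is precisely what generates the second summand in $E_6$ and the $\int_{S_x}\mean{\tilde g(x,\beta_{n-2}+\delta_x+\delta_z)}$ factor in $E_4$.
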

\begin{rem}
\label{r:krbin}
A few remarks are in place regarding the above theorem in relation to the approximation {results} in the previous sections. 
{\rm
\begin{enumerate}
\item The binomial point process does not possess the same spatial independence as the Poisson process and hence the bounds are more complicated here. However, we expect that in many applications, the computation of the bounds should not be more difficult than in the Poisson case.

\item Since the bounds are already quite involved, we have restricted ourselves to only the case $k=1$, i.e., to functions $f,g$ that take $x \in \X$ as an input instead of $\bx \in \X^k$. 

\item Our proof shall follow the same strategy as in Theorem \ref{t:qldep} but accounting for the  added complication due to the lack of spatial independence in the binomial point process.  In the special case  where $f(x,\omega) = f(x)$ (as in Theorem \ref{t:KR_coupling}) we can use a coupling approach as in the proof of Theorem \ref{t:KR_coupling}, but this shall not significantly simplify the proof or the bounds in the above theorem.
\end{enumerate}
}
\end{rem}
\begin{proof}
By \eqref{e:dkrPoisson}, we can assume as before that $\bL=\bM$.  Further,  we shall prove the theorem under the assumption that $\tg \equiv g$ and follow the arguments as in Step 2 in the proof of Theorem \ref{t:qldep}, to complete the proof of the general case.  Observe that in the case of $\tg \equiv g$ we have $E_1 = 0$. 

Assume from here onwards that $\tg \equiv g$,  {i.e.,  $\cS(x,\omega) \subset S_{x}$} for all $x \in \X$ and {$\omega\in\bN_{\X}$.} For a Lipschitz function $h$, we have by \eqref{d:gldyn} and \eqref{e:stein} that
$$
\mean{h(\zeta)} - \mean{h(\xi)} = \int_0^\infty \int_{\Y} \mean{D_yP_sh(\xi)} \bL(\md y) \md s - \int_0^\infty \mean{\sum_{y\in\xi} D_yP_sh(\xi-\delta_y)} \md s.
$$
{Using \eqref{e:IdentityBinomial}} we have
\begin{align*}
& \int_0^\infty \mean{\sum_{y\in\xi} D_yP_sh(\xi-\delta_y)} \md s = \int_0^\infty \mean{\sum_{x\in\beta_n} g(x,\beta_n) D_{f(x,\beta_n)}P_sh(\xi-\delta_{f(x,\beta_n)})} \md s \\
& = n \int_0^\infty \int_{\X} \mean{ g(x,\beta_{n-1}+\delta_x) D_{f(x,\beta_{n-1}+\delta_x)}P_sh(\xi[\beta_{n-1}+\delta_x]-\delta_{f(x,\beta_{n-1}+\delta_x)})} \mathbf{Q}(\md x) \md s.
\end{align*}
{Denoting by $\beta_m'$ an independent copy of $\beta_m$,} we obtain from \eqref{e:IntensityMeasureBinomial} that
$$
\int_0^\infty \int_{\Y} \mean{D_yP_sh(\xi)} \bL(\md y) \md s = n \int_0^\infty \int_{\X} \mean{g(x,\beta_{n-1}'+\delta_x) D_{f(x,\beta_{n-1}'+\delta_x)}P_sh(\xi[\beta_n]) } \mathbf{Q}(\md x) \md s.
$$
Combining the three previous identities leads to
\begin{align*}
 \mean{h(\zeta)} - \mean{h(\xi)} &= n \int_0^\infty \int_{\X} \mean{g(x,\beta_{n-1}'+\delta_x) D_{f(x,\beta_{n-1}'+\delta_x)}P_sh(\xi[\beta_n]) } \\
& \hspace{1.25cm} - \mean{g(x,\beta_{n-1}+\delta_x) D_{f(x,\beta_{n-1}+\delta_x)}P_sh(\xi[\beta_{n-1}+\delta_x]-\delta_{f(x,\beta_{n-1}+\delta_x)}) } \mathbf{Q}(\md x) \md s.
\end{align*}
For $x\in\X$ and $\omega\in\mathbf{N}$ we define
$$
\xi_{x}[\omega]= \sum_{ y\in \omega, S_x\cap S_y=\emptyset}g(y,\omega) \delta_{f(y,\omega)}.
$$
For $x\in\X$ and {$s\geq 0$} let
\begin{align*}
T_{1,x,s} & := \big| \mean{g(x,\beta_{n-1}'+\delta_x) D_{f(x,\beta_{n-1}'+\delta_x)}P_sh(\xi[\beta_n]) } - \mean{g(x,\beta_{n-1}'+\delta_x) D_{f(x,\beta_{n-1}'+\delta_x)}P_sh(\xi_x[\beta_{n}]) } \big|, \\
T_{2,x,s} & := \big|\mean{g(x,\beta_{n-1}+\delta_x) D_{f(x,\beta_{n-1}+\delta_x)}P_sh(\xi[\beta_{n-1}+\delta_x] -\delta_{f(x,\beta_{n-1}+\delta_x)}) } \\
& \quad \quad 
- \mean{g(x,\beta_{n-1}+\delta_x) D_{f(x,\beta_{n-1}+\delta_x)}P_sh(\xi_x[\beta_{n-1}]) } \big|, \\
T_{3,x,s} & = \big|  \mean{g(x,\beta_{n-1}'+\delta_x) D_{f(x,\beta_{n-1}'+\delta_x)}P_sh[\xi_x(\beta_{n}]) }  
- \mean{g(x,\beta_{n-1}+\delta_x) D_{f(x,\beta_{n-1}+\delta_x)}P_sh(\xi_x[\beta_{n-1}]) } \big|
\end{align*}
so that by the triangle inequality
\begin{equation}\label{e:InequalityTs}
\big|  \mean{h(\zeta)} - \mean{h(\xi)} \big| \leq n \int_0^\infty \int_{\X} \left( T_{1,x,s} + T_{2,x,s} + T_{3,x,s} \right) \, \mathbf{Q}(\md x) \md s.
\end{equation}
Fix $x\in\X$ and $s\geq 0$. It follows from \eqref{e:pslip} that
\begin{align*}
T_{1,x,s} 
& \leq 2e^{-s} \mean{ g(x,\beta_{n-1}'+\delta_x) d_{TV}(\xi[\beta_n],\xi_x[\beta_{n}]) }  = 2e^{-s} \mean{ g(x,\beta_{n-1}'+\delta_x)} \mean{d_{TV}(\xi[\beta_n],\xi_x[\beta_{n}]) },
\end{align*}
and
\begin{align*}
T_{2,x,s} 
& \leq 2e^{-s} \mean{ g(x,\beta_{n-1}+\delta_x) d_{TV}(\xi[\beta_{n-1}+\delta_x] -\delta_{f(x,\beta_{n-1}+\delta_x)},\xi_x[\beta_{n-1}]) }. 
\end{align*}
By \eqref{e:IdentityBinomial} we have
\begin{align*}
\mean{d_{TV}(\xi[\beta_{n}],\xi_x[\beta_{n}])} & \leq \mean{ \sum_{y\in\beta_{n}, S_x\cap S_y\neq\emptyset} g(y,\beta_{n}) } = n \int_{\mathbb{X}} \ind\{S_x\cap S_y \neq \emptyset\} \mean{g(y,\beta_{n-1}+\delta_y)} \mathbf{Q}(\md y),
\end{align*}
so that
$$
T_{1,x,s} \leq 2 e^{-s} \mean{ g(x,\beta_{n-1}'+\delta_x)} n \int_{\mathbb{X}} \ind\{S_x\cap S_y \neq \emptyset\} \mean{g(y,\beta_{n-1}+\delta_y)} \mathbf{Q}(\md y).
$$
The inequality
\begin{align*}
& g(x,\beta_{n-1}+\delta_x) d_{TV}(\xi[\beta_{n-1}+\delta_x] -\delta_{f(x,\beta_{n-1}+\delta_x)},\xi_x[\beta_{n-1}]) \\
& \leq g(x,\beta_{n-1}+\delta_x) \sum_{y\in\beta_{n-1}, S_x\cap S_y \neq \emptyset} g(y,\beta_{n-1}+\delta_x),
\end{align*}
{where we used that by our assumptions $g(y,\beta_{n-1}+\delta_x)=g(y,\beta_{n-1})$ if $S_x\cap S_y=\emptyset$,} and \eqref{e:IdentityBinomial} lead to
\begin{align*}
T_{2,x,s} \leq 2 e^{-s} n \int_{\mathbb{X}} \ind\{S_x\cap S_y \neq \emptyset\} \mean{g(x,\beta_{n-2}+\delta_x+\delta_y) g(y,\beta_{n-2}+\delta_x+\delta_y)} \mathbf{Q}(\md y).
\end{align*}
Thus, we have shown that
\begin{equation}\label{e:T1+T2}
n \int_0^\infty \int_{\X} T_{1,x,s} \, \mathbf{Q}(\md x) \md s \leq E_2 \quad \text{and} \quad n \int_0^\infty \int_{\X} T_{2,x,s} \, \mathbf{Q}(\md x) \md s \leq E_3.
\end{equation}
Next, we define $N_m(x)=\beta_m(S_x)$ and $N_m'(x)=\beta_m'(S_x)$. Recall from the statement of the theorem that $\widetilde{\beta}_{x,m}$ is a binomial point process of $m$ independent points distributed according to $\mathbf{Q}_x(\cdot)=\mathbf{Q}(\cdot \cap S_x^c)/\mathbf{Q}(S_x^c)$. From \eqref{e:pslip} we obtain
\begin{align*}
T_{3,x,s} & = \big| \mean{g(x,\beta_{n-1}+\delta_x) D_{f(x,\beta_{n-1}+\delta_x)}P_sh[\xi_x(\beta_{n}']) } \\
& \qquad - \mean{g(x,\beta_{n-1}+\delta_x) D_{f(x,\beta_{n-1}+\delta_x)}P_sh(\xi_x[\beta_{n-1}]) } \big| \\
& = \big| \mean{g(x,\beta_{n-1}+\delta_x) D_{f(x,\beta_{n-1}+\delta_x)}P_sh(\xi_x[\widetilde{\beta}_{x,n-N_{n}'(x)}]) } \\
& \qquad - \mean{g(x,\beta_{n-1}+\delta_x) D_{f(x,\beta_{n-1}+\delta_x)}P_sh(\xi_x[\widetilde{\beta}_{x,n-1-N_{n-1}(x)}]) } \big| \\
& \leq 2 e^{-s} \mean{ g(x,\beta_{n-1}+\delta_x) {(\xi_x[\widetilde{\beta}_{x,n-N_{n}'(x)}] \triangle \xi_x[\widetilde{\beta}_{x,n-1-N_{n-1}(x)}])(\Y)}  }.
\end{align*} 
We denote the points of $\widetilde{\beta}_{x,m}$, by $\widetilde{X}_1,\hdots,\widetilde{X}_m$.  For $\ell\leq n$ we have
\begin{equation}\label{e:BoundXiX}
\begin{split}
& {(\xi_x[\widetilde{\beta}_{x,n-\ell}] \triangle \xi_x[\widetilde{\beta}_{x,n}])(\Y)} \\
& \leq \sum_{i=n-\ell+1}^n  \ind\{S_x\cap S_{\widetilde{X}_i} = \emptyset \} g(\widetilde{X}_i,\widetilde{\beta}_{x,n}) 
+ \sum_{i=n-\ell+1}^n \sum_{j=1}^{n-\ell} \ind\{S_x \cap S_{\widetilde{X}_j} = \emptyset \}  g(\widetilde{X}_j,\widetilde{\beta}_{x,n-\ell}) \ind\{\widetilde{X}_i \in S_{\widetilde{X}_j} \} \\
& \quad + \sum_{i=n-\ell+1}^n \sum_{j=1}^{n-\ell} \ind\{S_x \cap S_{\widetilde{X}_j} = \emptyset \}   g(\widetilde{X}_j,\widetilde{\beta}_{x,n}) \ind\{\widetilde{X}_i\in S_{\widetilde{X}_j} \} .
\end{split}
\end{equation}
Here, the first sum on the right-hand side counts the points of $\xi_x[\widetilde{\beta}_{n,x}]$ associated with points from $\widetilde{\beta}_{x,n}\setminus\widetilde{\beta}_{x,n-\ell}$, while the second and the third term bound the number of points {that are present in $\xi_x[\widetilde{\beta}_{x,n-\ell}]$ or $\xi_x[\widetilde{\beta}_{x,n}]$ and not present or different in the other point process.} {We will use the triangle inequality
$$
(\xi_x[\widetilde{\beta}_{x,n-N_{n}'(x)}] \triangle \xi_x[\widetilde{\beta}_{x,n-1-N_{n-1}(x)}])(\Y) \leq \xi_x[\widetilde{\beta}_{x,n-N_n'(x)}] \triangle \xi_x[\widetilde{\beta}_{x,n}])(\Y) + (\xi_x[\tilde{\beta}_{x,n}] \triangle \xi_x[\tilde{\beta}_{x,n-1-N_{n-1}(x)}])(\Y)
$$
in the following.} It follows from \eqref{e:BoundXiX} {with $\ell=N_n'(x)$} that
\begin{align*}
& \mean{ {(\xi_x[\widetilde{\beta}_{x,n-N_n'(x)}] \triangle \xi_x[\widetilde{\beta}_{x,n}])(\Y) } }\\
& \leq \mean{ \sum_{i=n-N_n'(x)+1}^n  \ind\{S_x \cap S_{\widetilde{X}_i} = \emptyset\} g(\widetilde{X}_i,\widetilde{\beta}_{x,n}) } \\
& \quad + \mean{ \sum_{i=n-N_n'(x)+1}^n \sum_{j=1}^{n-N_n'(x)} \ind\{S_x\cap S_{\widetilde{X}_j} = \emptyset\}  g(\widetilde{X}_j,\widetilde{\beta}_{x,n-N_n'(x)}) \ind\{\widetilde{X}_i\in S_{\widetilde{X}_j} \} } \\
& \quad + \mean{ \sum_{i=n-N_n'(x)+1}^n \sum_{j=1}^{n-N_n'(x)} \ind\{S_x\cap S_{\widetilde{X}_j} = \emptyset \}  g(\widetilde{X}_j,\widetilde{\beta}_{x,n}) \ind\{\widetilde{X}_i \in S_{\widetilde{X}_j} \} } \\
& =: U'_{1,x}+U'_{2,x}+U'_{3,x}.
\end{align*}
We define $U_{i,x}:=\mean{g(x,\beta_{n-1}+\delta_x)} U'_{i,x}$ for $i\in\{1,2,3\}$. Applying once more \eqref{e:BoundXiX} {with $\ell=N_{n-1}(x)+1$} leads to
\begin{align*}
& \mean{g(x,\beta_{n-1}+\delta_x) {(\xi_x[\tilde{\beta}_{x,n}] \triangle \xi_x[\tilde{\beta}_{x,n-1-N_{n-1}(x)}])(\Y)} }\\
& \leq \mean{ g(x,\beta_{n-1}+\delta_x) \sum_{i=n-N_{n-1}(x)}^n  \ind\{S_x\cap S_{\widetilde{X}_i}=\emptyset \} g(\widetilde{X}_i,\widetilde{\beta}_{x,n}) } \\
& \quad + \mathbb{E}\Bigg\{g(x,\beta_{n-1}+\delta_x) \\
& \qquad \qquad \times \sum_{i=n-N_{n-1}(x)}^n \sum_{j=1}^{n-1-N_{n-1}(x)} \ind\{S_x\cap S_{\widetilde{X}_j} =\emptyset\}  g(\widetilde{X}_j,\widetilde{\beta}_{x,n-1-N_{n-1}(x)}) \ind\{\widetilde{X}_i \in S_{\widetilde{X}_j} \} \Bigg\} \\
& \quad + \mean{g(x,\beta_{n-1}+\delta_x)  \sum_{i=n-N_{n-1}(x)}^n \sum_{j=1}^{n-1-N_{n-1}(x)} \ind\{S_x\cap S_{\widetilde{X}_j}=\emptyset \}  g(\widetilde{X}_j,\widetilde{\beta}_{x,n}) \ind\{\widetilde{X}_i \in S_{\widetilde{X}_j} \} } \\
& =: U_{4,x} + U_{5,x} + U_{6,x}.
\end{align*}
From \eqref{e:IdentityBinomial}, which can be adapted to the situation, where one sums only over a fraction of the points, and the independence of $N'_n(x)$ and $\widetilde{\beta}_{n,x}$ it follows that
$$
U'_{1,x} =\mean{N_n'(x)} \int_{\X} \ind\{S_x\cap S_y=\emptyset\} \mean{g(y,\widetilde{\beta}_{x,n-1}+\delta_y)} \mathbf{Q}_x(\md y),
$$
while the independence of $\beta_{n-1}$ and $\widetilde{\beta}_{n,x}$ and \eqref{e:IdentityBinomial} lead to
\begin{align*}
U_{4,x} & = \mean{g(x,\beta_{n-1}+\delta_x) (1+N_{n-1}(x))} \int_{\mathbb{X}} \ind\{S_x\cap S_y =\emptyset\} \mean{g(y,\widetilde{\beta}_{x,n-1}+\delta_y)} \mathbf{Q}_x(\md y) \\
& = \bigg( \mean{g(x,\beta_{n-1}+\delta_x)} + (n-1) \int_{S_x} \mean{g(x,\beta_{n-2}+\delta_x+\delta_z)} \mathbf{Q}(\md z)  \bigg) \\
& \quad \quad \times \int_{\X} \ind\{S_x\cap S_y=\emptyset\} \mean{g(y,\widetilde{\beta}_{x,n-1}+\delta_y)} \mathbf{Q}_x(\md y).
\end{align*}
Together with $\mean{N'_n(x)}=n \mathbf{Q}(S_x)$ and $U_{1,x}=\mean{g(x,\beta_{n-1}+\delta_x)} U'_{1,x}$ we obtain
\begin{equation}\label{e:U1+U4}
\begin{split}
U_{1,x} + U_{4,x} & = \bigg(  (1+n \mathbf{Q}(S_x) ) \mean{g(x,\beta_{n-1}+\delta_x)} + (n-1) \int_{S_x} \mean{g(x,\beta_{n-2}+\delta_x+\delta_z)} \mathbf{Q}(\md z)  \bigg) \\
& \quad \quad \times \int_{\X} \ind\{S_x\cap S_y=\emptyset\} \mean{g(y,\widetilde{\beta}_{x,n-1}+\delta_y)} \mathbf{Q}_x(\md y).
\end{split}
\end{equation}
By similar independence arguments as above and \eqref{e:IdentityBinomial}, we have
\begin{align*}
U'_{3,x} & \leq \mean{ \sum_{i=n-N_n'(x)+1}^n \sum_{j=1}^{n} \mathbf{1}\{ S_x \cap S_{\widetilde{X}_j}=\emptyset \}  g(\widetilde{X}_j,\widetilde{\beta}_{x,n}) \mathbf{1}\{\widetilde{X}_i \in S_{\widetilde{X}_j} \} } \\
& \leq n \mean{N_n'(x)} \int_{\X^2} \ind\{S_x\cap S_{y_1}=\emptyset\} \ind\{ y_2\in S_{y_1} \} \mean{g(y_1,\widetilde{\beta}_{x,n-2}+\delta_{y_1}+\delta_{y_2})} \mathbf{Q}_x^2(\md(y_1,y_2)) 
\end{align*}
and
\begin{align*}
U_{6,x} & \leq n \mean{ g(x,\beta_{n-1}+\delta_x) (1+N_{n-1}(x)) } \\
& \quad \times \int_{\X^2} \ind\{S_x\cap S_{y_1}=\emptyset\} \ind\{ y_2\in S_{y_1}\} \mean{g(y_1,\widetilde{\beta}_{x,n-2}+\delta_{y_1}+\delta_{y_2}} \mathbf{Q}^2_x(\md(y_1,y_2)) \\
& \leq n\bigg( \mean{g(x,\beta_{n-1}+\delta_x)} + n \int_{S_x} \mean{g(x,\beta_{n-2}+\delta_x+\delta_z)} \mathbf{Q}(\md z)  \bigg) \\
& \quad \times \int_{\X^2} \ind\{S_x\cap S_{y_1}=\emptyset\} \ind\{y_2\in S_{y_1}\} \mean{g(y_1,\widetilde{\beta}_{x,n-2}+\delta_{y_1}+\delta_{y_2}} \mathbf{Q}^2_x(\md(y_1,y_2)).
\end{align*}
From the definition of $U_{3,x}$ and $\mean{N'_n(x)}=n \mathbf{Q}(S_x)$ we derive
\begin{equation}\label{e:U3+U6}
\begin{split}
U_{3,x}+U_{6,x} & \leq n\bigg( (1+n \mathbf{Q}(S_x)) \mean{g(x,\beta_{n-1}+\delta_x)} + n \int_{S_x} \mean{g(x,\beta_{n-2}+\delta_x+\delta_z)} \mathbf{Q}(\md z)  \bigg) \\
& \quad \times \int_{\X^2} \ind\{S_x\cap S_{y_1}=\emptyset\} \ind\{y_2\in S_{y_1}\} \mean{g(y_1,\widetilde{\beta}_{x,n-2}+\delta_{y_1}+\delta_{y_2}} \mathbf{Q}^2_x(\md(y_1,y_2)).
\end{split}
\end{equation}
Combining \eqref{e:U1+U4} and \eqref{e:U3+U6} yields
\begin{equation}\label{e:E4}
2n \int_{\X} \left( U_{1,x}+U_{3,x}+U_{4,x}+U_{6,x} \right) \, \mathbf{Q}(\md x) \leq E_4.
\end{equation}
By \eqref{e:AssumptionRi} and the assumption $\cS(z,\omega)\subset S_z$ for all $z\in \X$ and $\omega\in\bN_{\X}$, we obtain
\begin{equation}\label{e:Identity_g}
\ind\{z_1\notin S_{z_2}\} g(z_2,\omega+\delta_{z_1}) = \ind\{z_1\notin S_{z_2}\} g(z_2,\omega)
\end{equation}
for $z_1,z_2\in\X$ and $\omega\in\bN_{\X}$ {with $z_2\in\omega$.} For $x\in\X$ we deduce from \eqref{e:IdentityBinomial} and \eqref{e:Identity_g} that 
\begin{align*}
U'_{2,x} & = \mean{ N_n'(x) \sum_{j=1}^{n-N_n'(x)} \ind\{S_x\cap S_{\widetilde{X}_j} = \emptyset \}  g(\widetilde{X}_j,\widetilde{\beta}_{x,n-N_n'(x)}) \mathbf{Q}_x(S_{\widetilde{X}_j}) } \\
& = \mean{ \sum_{(x_1,x_2)\in\beta_{n,\neq}^2} \ind\{x_1\in S_x\} \ind\{S_x\cap S_{x_2}=\emptyset\} g(x_2,\beta_n) \mathbf{Q}_x(S_{x_2}) } \\
& = n(n-1) \mathbf{Q}(S_x) \int_{\X} \ind\{S_x\cap S_y=\emptyset\} \mean{g(y,\beta_{n-2}+\delta_y)} \mathbf{Q}_x(S_y) \mathbf{Q}(\md y)
\end{align*}
so that, with $U_{2,x}=\mean{g(x,\beta_{n-1}+\delta_x)}U'_{2,x}$,
\begin{equation}\label{e:E5}
2 n\int_{\X} U_{2,x} \, \mathbf{Q}(\md x) \leq E_5.
\end{equation}
Finally, applying again \eqref{e:IdentityBinomial} and \eqref{e:Identity_g} leads to 
\begin{align*}
U_{5,x} & = \mathbb{E} \bigg\{ g(x,\beta_{n-1}+\delta_x) (1+N_{n-1}(x)) \sum_{j=1}^{n-1-N_{n-1}(x)} \ind\{S_x\cap S_{\widetilde{X}_j}=\emptyset\}  g(\widetilde{X}_j,\widetilde{\beta}_{x,n-1-N_{n-1}(x)}) \mathbf{Q}_x(S_{\widetilde{X}_j}) \bigg\} \\
& = \mean{ g(x,\beta_{n-1}+\delta_x) \sum_{y\in \beta_{n-1}} \ind\{S_x\cap S_y = \emptyset\} g(y,\beta_{n-1}) \mathbf{Q}_x(S_y) } \\
& \quad + \mean{ g(x,\beta_{n-1}+\delta_x) \sum_{(y_1,y_2)\in\beta_{n-1,\neq}^2}  \ind\{y_1\in S_x\} \ind\{S_x\cap S_{y_2}=\emptyset\} g(y_2,\beta_{n-1}) \mathbf{Q}_x(S_{y_2}) } \\
& \leq n \int_{\X} \ind\{S_x\cap S_y = \emptyset\} \mean{ g(x,\beta_{n-2}+\delta_x) g(y,\beta_{n-2}+\delta_y) }  \mathbf{Q}_x(S_y) \mathbf{Q}(\md y) \\
& \quad { + n^2 \int_{\X} \int_{S_x} \ind\{S_x\cap S_y = \emptyset\} \mean{ g(x,\beta_{n-3}+\delta_x+\delta_z) g(y,\beta_{n-3}+\delta_y) } \mathbf{Q}_x(S_y) \mathbf{Q}(\md z)  \mathbf{Q}(\md y) } 
\end{align*}
for $x\in\X$, which implies
\begin{equation}\label{e:E6}
2n \int_{\X} U_{5,x} \, \mathbf{Q}(\md x) \leq E_6.
\end{equation}
Combining \eqref{e:E4}, \eqref{e:E5} and \eqref{e:E6} provides a bound for the integral over $T_{3,x,s}$, which together with \eqref{e:InequalityTs} and \eqref{e:T1+T2} completes the proof.
\end{proof}

\section{Applications}\label{sec:Applications}

In this section we present applications of our results to (a) the  critical points of a random distance function and (b) {the volume (with respect to the intensity measure) of large $k$-nearest neighbor balls.}  Though some Poisson approximation results are known in both of these models, our results extend these in two ways. Firstly, we consider more general point processes than those considered before and secondly, we provide rates of convergence under a stronger metric. More details on comparison with the existing literature for these specific applications will be given in the respective subsections. 

We wish to point out that these applications are to illustrate our generic results and it is conceivable that many more such applications of our results would be possible.  For example, one may consider extremes of circum-radii and in-radii of Poisson-Voronoi tessellations as in \cite{Calkaextreme2014,Chenaviertessellations2014,PianoforteSchulte2021} and also other stabilizing statistics as in \cite{Otto2020}. For example, we can deduce immediately from Theorem \ref{t:qldep} and the proofs in \cite{Otto2020} that \cite[Theorems 4.2, 5.5 and 6.5]{Otto2020} hold under the stronger KR distance. Using Theorems \ref{t:KR_coupling} or \ref{t:qldep}, we may extend many Poisson approximation results to Poisson \emph{process} approximation results. For example, a coupling similar to that in Theorem \ref{t:KR_coupling} was used in \cite[Theorem 2.8]{Iyer2020} to prove Poisson convergence for the number of isolated faces in a Vietoris-Rips complex.  We expect that other Poisson approximation results in \cite{penrose_inhomogeneous_2018} could also be extended using our framework (see Remark \ref{rem:KRcoupling}).  {Also, we  believe that Poisson convergence results (for example,  see \cite[Theorem 2.1]{Owada2017}) based on dependency graph method \cite{Arratia1989} could be extended to a Poisson process approximation using our Theorem \ref{t:qldep}.} Another point to emphasize is that most of the afore-mentioned articles consider only approximation for point processes induced by Poisson point processes but our bounds will help extending them to point processes induced by binomial point processes as well.  For example,  see Theorem \ref{t:knnapproxbin}.   

\subsection{Critical points for the random distance function}
\label{s:critpoint_dist}

Let $\omega$ be a point configuration in some metric space.
In this section we are interested in the space generated by the union of balls around $\omega$,
\[
	B_r(\omega) := \bigcup_{p\in \omega} B_r(p),
\]
{where $B_r(p)$ denotes the closed ball of radius $r$ centred at $p$.}  In \cite{kahle_random_2011} the theoretical study of \emph{homology} of $B_r(\cdot)$ taken over random point processes was initiated.  Briefly, homology is an algebraic-topological structure representing information about cycles in various dimensions, where $0$-dimensional cycles correspond to connected components, $1$-dimensional cycles correspond to loops surrounding ``holes", $2$-dimensional cycles correspond to surfaces enclosing  ``air pockets", etc. (cf. \cite{hatcher_algebraic_2002}). Increasing the radius $r$, \emph{Morse Theory} (cf. \cite{gershkovich_morse_1997,milnor_morse_1963}) states that changes in the homology of $B_r(\omega)$ occur at critical levels of the distance function
\[	
\rho(x;\omega) := \min_{p\in \omega} \dist(x,p),
\]
where $\dist(\cdot,\cdot)$ is the distance under the corresponding metric. For every critical point, Morse theory also assigns an \emph{index} which, roughly speaking, counts the number of independent directions around the critical point, along which the distance function is decreasing. Critical points of index $k$ can then affect the homology of $B_r(\eta)$ either in  dimension $k$ (creating new $k$-dimensional cycles) or in dimension $k-1$ (terminating a $(k-1)$-dimensional cycle). Thus, the work in \cite{bobrowski_distance_2014,Bobrowski2019} focused on the critical points and their indexes, as a proxy to the homology. We shall define critical points more formally below. Note that  we will not elaborate more on the connection between critical points and homology as this will involve introducing algebraic topology basics. For more on the homology of $B_r(\omega)$, we refer the reader to the survey \cite{Bobrowski2018}.

{The work in \cite{Bobrowski2019} focused on a homogeneous Poisson process defined on a flat torus $\X = \T^d = \R^d/\Z^d$ (which can be thought of as 
the unit box $[0,1]^d$ with a periodic boundary). Specifically, it considered a Poisson process $\eta=\eta_n$ on $\T^d$ with a fixed $n$.} The main objective there was to characterize the phase transition for \emph{homological connectivity}, i.e., where the $k$-th homology of the random union $B_r(\eta_n)$ converges to the $k$-th homology of the underlying torus $\T^d$. The main part of the proof there, shows that the very last obstructions to the $k$-th homological connectivity are in one-to-one correspondence with critical points of index $k$. In fact, it is shown in \cite[Section 8]{Bobrowski2019} that in terms of \emph{persistent homology} these obstructions form infinitesimally small persistence intervals that are the very last ones to appear in the persistence barcode. 
The conditions for points to be critical, as presented in \cite[Section 2.3]{Bobrowski2019} are localized in the sense of Section \ref{s:locdep}. Hence, we are interested in providing a {limit} theorem for this process of obstructions.

Let $r_n$ be defined via
\eqb\label{e:r_n}
	a_n := \omega_d nr_n^d = \log n + (k-1)\logg n + \alpha_0,
\eqe
where $\omega_d$ is the volume of a {$d$-dimensional} unit ball, and $\alpha_0\in \R$ is fixed. 
We call the values of $r_n$ satisfying \eqref{e:r_n} the `$k$-homological connectivity' regime. Let $R_n$ be any value satisfying 
\eqb\label{e:R_n}
\limninf R_n =0,\quad\text{and}\quad\limninf r_n/R_n = 0.
\eqe
The analysis in \cite{Bobrowski2019} focused on the number of critical points $c\in \T^d$ with Morse index $k$, such that $\rho(c;\eta_n) \in (r_n,R_n]$.
Defining by $C_{k,n}$ the number of such critical points, in \cite[Proposition 4.1]{Bobrowski2019} the following was proved.

\begin{prop}\label{prop:crit_moments}
Let $\alpha_0 \in \R$, and let $r_n,R_n$ satisfy \eqref{e:r_n} {and} \eqref{e:R_n}, respectively. Then
\[
	\limninf \mean{C_{k,n}} = \limninf \var{C_{k,n}} = D_k e^{-\alpha_0},
\]
where $D_k$ is a known constant.
\end{prop}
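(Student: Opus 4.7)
My plan is to express $C_{k,n}$ as a $U$-statistic of the Poisson process $\eta_n$ and then apply the multivariate Mecke formula together with a Blaschke--Petkantschin change of variables.

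\textbf{Representation.} First I would invoke the critical point characterisation from \cite[Section 2.3]{Bobrowski2019}: every Morse index $k$ critical point $c$ of $\rho(\,\cdot\,;\eta_n)$ arises as the unique circumcentre $c(\mathbf{p})$ of a $(k+1)$-tuple $\mathbf{p}\in\eta_{n,\neq}^{k+1}$ satisfying a geometric non-degeneracy condition $g_k(\mathbf{p})=1$ (essentially, $c(\mathbf{p})$ lies in the relative interior of $\conv(\mathbf{p})$), and such that the open ball $B_{\rho(\mathbf{p})}^\circ(c(\mathbf{p}))$ contains no other point of $\eta_n$. This yields
\[
C_{k,n} = \frac{1}{(k+1)!}\sum_{\mathbf{p}\in\eta_{n,\neq}^{k+1}} g_k(\mathbf{p})\,\ind\{\rho(\mathbf{p})\in(r_n,R_n]\}\,\ind\{\eta_n\cap B_{\rho(\mathbf{p})}^\circ(c(\mathbf{p}))=\emptyset\}.
\]

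\textbf{Expectation.} Applying the multivariate Mecke formula \eqref{e:mmecke} together with the Poisson void probability $e^{-n\omega_d\rho^d}$ (valid since $R_n\to 0$, so the ball is isometric to a Euclidean one) gives
\[
\mean{C_{k,n}} = \frac{n^{k+1}}{(k+1)!}\int_{(\T^d)^{k+1}} g_k(\mathbf{p})\,\ind\{\rho(\mathbf{p})\in(r_n,R_n]\}\,e^{-n\omega_d\rho(\mathbf{p})^d}\,d\mathbf{p}.
\]
I would then apply a Blaschke--Petkantschin type change of variables $\mathbf{p}\mapsto(c,\rho,\mathbf{u})$ with $\mathbf{u}\in(S^{d-1})^{k+1}$ to decouple the integral: after integrating $c$ over $\T^d$ and the directions $\mathbf{u}$, one collects a purely geometric constant $\widetilde D_k$ and reduces to the one-dimensional integral $\int_{r_n}^{R_n}\rho^{d(k+1)-1}e^{-n\omega_d\rho^d}\,d\rho$. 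Substituting $s=n\omega_d\rho^d$ transforms this into $(n\omega_d)^{-(k+1)}\int_{a_n}^{n\omega_d R_n^d} s^k e^{-s}\,ds$, and plugging in $a_n=\log n+(k-1)\log\log n+\alpha_0$ with $n\omega_d R_n^d\to\infty$, the asymptotic $\int_{a_n}^\infty s^k e^{-s}\,ds\sim a_n^k e^{-a_n}$ yields $\mean{C_{k,n}}\to D_k e^{-\alpha_0}$ for an explicit $D_k$ determined by $\widetilde D_k$.

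\textbf{Variance and main obstacle.} For the variance I would expand $\mean{C_{k,n}(C_{k,n}-1)}$ using \eqref{e:mmecke} applied to pairs of $(k+1)$-tuples, producing Mecke integrals $J_j$ indexed by the number $j$ of shared points between the two tuples. The $j=0$ contribution equals $(\mean{C_{k,n}})^2$ plus a small correction: the two empty-ball indicators fail to factorise only when the circumcentres lie within distance $2R_n\to 0$, and this overlap contributes $o(1)$. Each $j\geq 1$ term is also $o(1)$: sharing $j$ points imposes an extra geometric constraint that shrinks the effective integration volume by roughly $R_n^{dj}$, more than compensating the missing factor $n^j$ under the scaling \eqref{e:r_n} and \eqref{e:R_n}. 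Combining, $\var{C_{k,n}} = \mean{C_{k,n}} + \mean{C_{k,n}(C_{k,n}-1)} - (\mean{C_{k,n}})^2 \to D_k e^{-\alpha_0}$. The hardest step will be the Blaschke--Petkantschin decomposition and the explicit identification of $D_k$ as an integral over admissible direction configurations weighted by $g_k$, together with the careful covariance bookkeeping of the overlapping empty-ball constraints in the $j\geq 1$ terms.
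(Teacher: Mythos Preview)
The paper does not give its own proof of this proposition; it is quoted verbatim from \cite[Proposition 4.1]{Bobrowski2019}. Your outline follows the same strategy as that reference (Mecke formula, Blaschke--Petkantschin, decomposition of the second factorial moment by the number $j$ of shared points), so conceptually you are on the right track. However, two of your computations are incorrect as written.

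\emph{Expectation.} Your change of variables places $\mathbf{u}\in(S^{d-1})^{k+1}$ and produces the radial factor $\rho^{d(k+1)-1}$, hence $s^k$ after the substitution $s=n\omega_d\rho^d$. But for $k<d$ the $(k+1)$ points lie on a $(k-1)$-sphere inside a $k$-flat, so the correct parametrisation is $(c,L,\rho,\mathbf{u})$ with $L$ in the Grassmannian of linear $k$-planes and $\mathbf{u}\in(S^{k-1})^{k+1}$. The Blaschke--Petkantschin Jacobian then carries an extra factor $\big(\mathrm{Vol}_k(\conv(\mathbf{p}))\big)^{d-k}$, which scales like $\rho^{k(d-k)}$, and the net radial exponent is $\rho^{dk-1}$, giving $s^{k-1}$ after substitution. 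This is exactly what appears in the formula $\bL(A)=D_k\,n\int s^{k-1}e^{-s}\,\md s\,\md c$ quoted from \cite{Bobrowski2019} in the proof of Theorem \ref{thm:crit_pts}. With your exponent $s^k$, the prefactor $n^{k+1}(n\omega_d)^{-(k+1)}$ is a constant, $\int_{a_n}^\infty s^k e^{-s}\,\md s\sim a_n^k e^{-a_n}\sim (\log n)e^{-\alpha_0}/n$, and you would conclude $\mean{C_{k,n}}\to 0$ rather than $D_k e^{-\alpha_0}$.

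\emph{Variance.} The heuristic ``sharing $j$ points shrinks the volume by $R_n^{dj}$, which more than compensates the missing $n^j$'' is false in this regime: one has $nR_n^d\ge n r_n^d=a_n/\omega_d\to\infty$, so $(nR_n^d)^j\to\infty$ and a pure volume argument does not close. The actual control of the $j\ge 1$ terms in \cite{Bobrowski2019} uses two additional ingredients that your sketch omits: the exponential gain from the void probability of the \emph{union} $B_{\rho_1}(c_1)\cup B_{\rho_2}(c_2)$ (which is strictly larger than a single ball whenever the centres differ), and a careful angular decomposition separating the near-degenerate configurations (where the two circumspheres almost coincide) from the rest. This is precisely the $\eps_j,\delta_j$ splitting that yields the bounds \cite[Equations (8.14)--(8.15)]{Bobrowski2019}, reproduced in the proof of Theorem \ref{thm:crit_pts} above, and it is the genuinely hard part of the variance computation.
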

Note that the limiting results are independent of the choice of $R_n$ (provided that \eqref{e:R_n} is satisfied).
To prove Proposition \ref{prop:crit_moments}, \cite{Bobrowski2019} used the fact that critical points of index $k$ are generated by subsets  $\bx\in(\eta_n)^{k+1}_{\ne}$. Defining
\eqb\label{e:g_rR}
g(\bx, \eta_n) := \ind\set{\text{$\bx\subset \eta_n$ generates a critical point of index $k$, with $\rho\in (r_n,R_n]$}},
\eqe
then we can write
\[
	C_{k,n} := \frac{1}{(k+1)!} \sum_{\bx\in(\eta_n)^{k+1}_{\ne}} g(\bx,\eta_n).
\]	
Our goal here is not only to prove a Poisson limit for the random variable $C_{k,n}$ but to provide an elaborate point-process limit for the actual configuration of critical points that appear in $(r_n,R_n]$.
To this end, we require a few more definitions. 

We will consider subsets $\bx\in (\T^d)^{k+1}$ that are (a) in general position, and (b) contained in a ball of radius $R_n$. For such subsets we can define $c(\bx)$ and $\rho(\bx)$ as the center and radius of the unique $(k-1)$-sphere containing $\bx$. In \cite[Lemma 2.3]{Bobrowski2019} it was shown that every critical point of $\rho(\cdot\ ;\eta_n)$ with Morse index $k$, is of the form $c(\bx)$ for some $\bx \in (\eta_n)^{k+1}_{\ne}$, and  the corresponding critical value satisfies $\rho(c(\bx);\eta_n) = \rho(\bx)$.
Next, following \eqref{e:r_n} we define
\[
	\alpha(\bx) := \omega_d n\rho(\bx)^d - \log n - (k-1)\logg n.
\]
Our goal is to define a point process on $\T^d\times\R$ representing pairs of the form $(c(\bx), \alpha(\bx))$ for those critical points in $(r_n,R_n]$, by defining
\[
	\xi_k = \xi_k[\eta_n] := \frac{1}{(k+1)!} \sum_{\bx\in(\eta_n)^{k+1}_{\ne}} g(\bx,\eta_n) \delta_{f(\bx)},
\]	
where $f(\bx) = (c(\bx),\alpha(\bx))$, and $r_n$ satisfies \eqref{e:r_n}.
In other words, the domain space here is $\X = \T^d$ and the image space is  $\Y: = \T^d \times \R_0$, where  $\R_0 := [\alpha_0,\infty)$. From Proposition \ref{prop:crit_moments} we know that 
\eqb\label{e:lim_moments}
	\limninf \mean{\xi_k(\Y)} = \limninf \var{\xi_k(\Y)} = D_k e^{-\alpha_0}.
\eqe
We will prove that $\xi_k$ converges to a Poisson process on $\Y$.

\begin{thm}\label{thm:crit_pts}
Let $\alpha_0 \in \R$, and $\xi_k$ as defined above. Then for $n\ge 3$ and $R_n = \sqrt{r_n}$, we have
\[
	\dkr(\xi_k, \zeta_k) \le C_{\alpha_0} (\logg n)^d(\log n)^{-\frac{d-k}{d+1}},
\]
for some $C_{\alpha_0}>0$, and where $\zeta_k$ is a Poisson process on $\Y = \T^d\times\R_0$, with intensity   $D_k e^{-\alpha}\md\alpha \md c$.
In particular, this implies that $\xi_k \xrightarrow{\bf KR} \zeta_k$ as $n\to\infty$.
\end{thm}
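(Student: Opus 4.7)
The plan is to apply Theorem~\ref{t:qldep} with $\X=\T^d$, $\Y=\T^d\times\R_0$, Poisson process $\eta_n$ of intensity $\bK=n\,\mathrm{Leb}_{\T^d}$, and with the integer $k$ appearing in Theorem~\ref{t:qldep} replaced by $k+1$ throughout (since critical points of index $k$ are generated by $(k+1)$-tuples in general position). For $\bx=(x_1,\ldots,x_{k+1})$ I would take the stopping set $\cS(\bx,\eta_n):=B_{\rho(\bx)}(c(\bx))$ and the deterministic localizing region $S_\bx:=B_{2R_n}(x_1)$. By the critical-point characterization of \cite[Lemma~2.3]{Bobrowski2019}, $f$ and $g$ depend on $\eta_n$ only through $\eta_n\cap B_{\rho(\bx)}(c(\bx))$, so $\cS$ is a valid stopping set. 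Whenever $g(\bx,\eta_n+\delta_\bx)=1$, the geometry forces $\rho(\bx)\le R_n$ and $x_1\in\partial B_{\rho(\bx)}(c(\bx))$, so $\cS(\bx,\eta_n)\subset S_\bx$; hence $\tilde g\equiv g$ on the support of the indicator and the term $E_1$ in Theorem~\ref{t:qldep} vanishes identically.

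The most quantitative step is the bound on $d_{TV}(\bL,\bM)$. By translation invariance of $\T^d$, $\bL$ factors as $\mathrm{Leb}_{\T^d}\otimes\lambda_n$ for some $\lambda_n$ on $\R_0$ with $\lambda_n(\R_0)=\mean{C_{k,n}}$. I would refine the first-moment computation of \cite[Proposition~4.1]{Bobrowski2019} with quantitative error terms, using the Blaschke-Petkantschin formula to change variables to $(c(\bx),\rho(\bx),\text{shape})$, the Morse-index-$k$ criterion of \cite[Lemma~2.3]{Bobrowski2019}, and the void-probability identity $e^{-\omega_d n\rho^d}=n^{-1}(\log n)^{-(k-1)}e^{-\alpha}$ to produce both the $e^{-\alpha}$ density and the constant $D_k$. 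This should yield $d_{TV}(\bL,\bM)=O\bigl((\logg n)^d(\log n)^{-(d-k)/(d+1)}\bigr)$, where the $(\logg n)^d$ factor records the $\alpha$-window of width $O(\logg n)$ induced by $R_n=\sqrt{r_n}$.

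For the remaining error terms, the indicator $\ind\{S_\bx\cap S_\bz\ne\emptyset\}=\ind\{|x_1-z_1|\le 4R_n\}$ in $E_2$ contributes a volume factor $O(R_n^d)$, while by translation invariance the double $p(\bx)p(\bz)$-integral reduces to a product bounded by $\mean{\xi_k(\Y)}^2=O(1)$, giving $E_2=O(R_n^d)=O((\log n/n)^{1/2})$, already negligible against the target rate. The term $E_3$ admits an analogous bound, with joint emptiness of the two critical balls providing an additional exponential saving. The most intricate term is $E_4$, indexed by proper nonempty subsets $I\subsetneq\{1,\ldots,k+1\}$ corresponding to pairs of $(k+1)$-tuples sharing exactly the points indexed by $I$; for each $I$ I would parameterize the two critical spheres by the shared points plus $k+1-|I|$ additional points determining the second sphere, and use the joint void probability to save an exponential factor in $n$.

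The main obstacle is the quantitative intensity-measure calculation together with the $E_4$ analysis. The choice $R_n=\sqrt{r_n}$ is delicate: it balances $E_2+E_3$ (growing polynomially in $R_n$) against the residual overlap contributions in $E_4$ and the shape-integral error in $d_{TV}(\bL,\bM)$ (which shrink as $R_n$ grows beyond $r_n$), the balance producing exactly the exponent $(d-k)/(d+1)$. The combinatorial accounting over subsets $I$ in $E_4$, combined with Blaschke-Petkantschin changes of variable adapted to critical spheres, constitutes the main technical work.
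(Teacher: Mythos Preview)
Your overall strategy---applying Theorem~\ref{t:qldep}---matches the paper's, but the execution differs in two substantive ways, and one of your claims is incorrect.

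First, the paper takes $S_\bx=\cS(\bx,\omega)=B_{\rho(\bx)}(c(\bx))$, which is already deterministic in $\bx$ (so $E_1=0$ trivially), rather than the larger $B_{2R_n}(x_1)$. More importantly, the paper uses the \emph{first} form of the bound in Theorem~\ref{t:qldep},
\[
\dkr(\xi_k,\zeta_k)\le d_{TV}(\bL,\bM)+2\bigl(\varx{\xi_k(\Y)}-\meanx{\xi_k(\Y)}\bigr)+2E_2,
\]
bypassing $E_3$ and $E_4$ entirely. The dominant contribution is $\varx{\xi_k(\Y)}-\meanx{\xi_k(\Y)}$, and its rate $(\logg n)^d(\log n)^{-(d-k)/(d+1)}$ is read off directly from the second-moment estimates already proved in \cite[(8.6),(8.14),(8.15)]{Bobrowski2019}, after optimizing the auxiliary parameters $\eps_j,\delta_j$ appearing there. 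Your proposed direct analysis of $E_4$ via Blaschke--Petkantschin would essentially reproduce that variance computation (the terms indexed by $I$ in $E_4$ are exactly the $I_j$ of \cite{Bobrowski2019}), so it would succeed, but it duplicates existing work that the paper simply quotes.

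Second, your attribution of the rate is wrong. The paper obtains $d_{TV}(\bL,\bM)=O(\logg n/\log n)$, which is strictly smaller than the target bound; this term is \emph{not} where the exponent $(d-k)/(d+1)$ originates. Likewise, the choice $R_n=\sqrt{r_n}$ is not a delicate balance producing that exponent: it merely ensures the truncation tail $e^{-\omega_d n R_n^d+a_n}$ in the $d_{TV}$ estimate is super-polynomially small while $E_2=O(R_n^d)=O((\log n/n)^{1/2})$ remains negligible. The exponent $(d-k)/(d+1)$ arises entirely from the internal optimization in the variance bound of \cite{Bobrowski2019} and is insensitive to $R_n$ within the range \eqref{e:R_n}.
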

In other words, the process of all pairs $(c,\alpha)$ representing critical points and critical values, has a limit of a Poisson process which is homogeneous in $c$ and has an exponentially decaying intensity in $\alpha$.

\begin{rem}
\label{r:critpts}
{\rm
\begin{enumerate}
\item Theorem 8.1 in \cite{Bobrowski2019} presents a weaker statement than Theorem \ref{thm:crit_pts} here. The limiting point process in \cite{Bobrowski2019} was only for the critical radii, while here we show that a Poisson limit also extends to the combined location+radius  point process. Regarding the proofs, the calculations in \cite{Bobrowski2019} provide all the moment estimates needed to invoke Theorem \ref{t:qldep}. However, the key ingredient needed to prove the Poisson limit in \cite{Bobrowski2019} is in fact Theorem  \ref{t:qldep} which appears here for the first time.

\item We can also conclude from Theorem  \ref{thm:crit_pts} the convergence of the entire point process of critical points and distances (i.e., on $\T^d\times\R$). More precisely, consider
$$ \xi'_k[\eta_n] := \frac{1}{(k+1)!}\sum_{\bx \in (\eta_n)^{k+1}_{\neq}} \ind\{\mbox{$\bx \subset \eta_n$ generates a critical point of index $k$ with $\rho(\bx) \leq R_n$}\}\delta_{f(\bx)},$$
where $f(\bx), \rho(\bx)$ are as defined above. Since Theorem \ref{thm:crit_pts} holds for all $\alpha_0 \in \R$, {by the characterization of convergence in distribution of point processes in \cite[Theorem 16.16]{KallenbergFoundations}}, we have that $\xi'_k[\eta_n] \stackrel{d}{\to} \zeta'_k$, where $\zeta'_k$ is a Poisson process on $\Y = \T^d\times \R$, with intensity $D_k e^{-\alpha}\md\alpha \md c$.

\item The choice of $R_n = \sqrt{r_n}$ was required in order to get the bound presented in Theorem \ref{thm:crit_pts}. In terms of the homological connectivity phenomenon, this choice has no practical consequence. In fact it can be shown that even at radius $r=2r_n$ with high probability $B_r(\eta_n)$ covers the torus $\T^d$ completely. This implies that homology exhibits no further changes, and the only changes we expect to see are at radii smaller than $2r_n \ll \sqrt{r_n}$.
\end{enumerate}
}
\end{rem}

\begin{proof}[Proof of Theorem \ref{thm:crit_pts}]
Following the notation above, we denote by $\bL$ and $\bM$ the intensity measures of $\xi_k$ and $\zeta_k$, respectively. Denote $\X := \T^d$, and note that since $\eta_n$ is a homogeneous Poisson process with rate $n$ on $\X$, we have that $\bK(\md x) = n\md x$. Also, setting $S_{\bx} = B_{\rho(\bx)}(c(\bx))$ then the definitions for the critical points in \cite{Bobrowski2019} imply that $g(\bx,\eta_n)$ is indeed localized to $S_{\bx}$, i.e., condition \eqref{e:AssumptionRi} holds with $\cS(\bx,\omega) \equiv S_{\bx}$.

Using Theorem \ref{t:qldep} we have,
\eqb\label{e:crit_pt_KR}
\begin{split}
&\dkr(\xi_k,\zeta_k) \le  d_{TV}({\bL},{\bM}) + 2\{\var{\xi_k(\Y)} - \mean{\xi_k(\Y)} \}\\
&\quad + \param{\frac{2}{(k+1)!}}^2 \int_{\X^{k+1}} \int_{\X^{k+1}}\ind\set{S_{\bz}\cap S_{\bx}\ne \emptyset} \mean{g(\bx,{\eta_n}+\delta_{\bx})} \mean{g(\bz,{\eta_n}+\delta_{\bz})} \bK^{k+1}(\md\bz) \bK^{k+1}(\md\bx).
\end{split}
\eqe

First, recall that $\zeta_k$ is a Poisson process on $\Y = \T^d\times\R_0$, with intensity $D_k e^{-\alpha}\md\alpha \md c$. Then for any measurable $A\subset \Y$, 
\[
	\bM(A) = \mean{\zeta_k(A)}  = D_k\int_A  e^{-\alpha}\md\alpha \md c.
\]
Next, the calculations in \cite{Bobrowski2019} (proof of Lemma 4.3) show that 
\[
	\bL(A) = \mean{\xi_k(A)}  = D_k n \int_{\hat A} s^{k-1}e^{-s} \ind\set{s\le \omega_d n R_n^d } \md s \md c,
\]
where 
\[
	\hat A := \set{(c,s) {:} (c,\alpha)\in A,\ s = \log n + (k-1)\logg n + \alpha}.
\]	
Therefore, defining $I_n(\alpha):= \ind\set{\log n + (k-1)\logg n + \alpha \le \omega_d n R_n^d}$, we have
\begin{align}
	\bL(A)  &= D_k n \int_{A} (\log n + (k-1)\logg n + \alpha)^{k-1}e^{-(\log n + (k-1)\logg n + \alpha)} I_n(\alpha) \md\alpha \md c \nonumber \\
\label{e:defnLcrit} &= D_k \int_{A} \param{1+\frac{ (k-1)\logg n + \alpha}{\log n}}^{k-1} e^{- \alpha} I_n(\alpha) \md\alpha \md c.
\end{align}
If $n$ is large enough, then $\alpha_0 > -\logg n$, and we have
\eqb\label{e:cp_bound_1}
\splitb
	|\bL(A)-\bM(A)| &= D_k \abs{\int_{A} \param{I_n(\alpha)\param{1+\frac{(k-1)\logg n+ \alpha}{\log n}}^{k-1}-1}e^{-\alpha} \md\alpha \md c}\\
&\le D_k \sum_{i=1}^{k-1}\binom{k-1}{i} \int_{A}\param{\frac{(k-1)\logg n + \alpha}{\log n}}^i e^{-\alpha} \md\alpha \md c + D_k \int_{\R} (1-I_n(\alpha)) e^{-\alpha} \md \alpha\\ 
&= D_k \sum_{i=1}^{k-1}\sum_{j=0}^{i}\binom{k-1}{i}\binom{i}{j} \frac{((k-1)\logg n)^{i-j}}{(\log n)^i}\int_{A} \alpha^j e^{-\alpha}\md\alpha \md c \\
& \quad + D_k e^{-\omega_d n R_n^d +\log n +(k-1)\logg n}\\
&= O\param{\frac{\logg n}{\log n}},
\splite
\eqe
where we used the fact that $\int_{\R_0}\alpha^j e^{-\alpha}\md\alpha < \infty$ and that $R_n=\sqrt{r_n}$.

The fact that $\var{\xi_k(\Y)} - \mean{\xi_k(\Y)} \to 0$ is given by \eqref{e:lim_moments}. 
To get an upper bound, we follow more carefully the proof of Proposition 4.1 in \cite{Bobrowski2019}. In \cite[Equation (8.6)]{Bobrowski2019} it is shown that
\[
\var{\xi_k(\Y)} - \mean{\xi_k(\Y)}  = \sum_{j=1}^kI_j + (I_0-\mean{\xi_k(\Y)}^2),
\]
where the terms $I_j$ are defined there. In \cite[Equation (8.14)]{Bobrowski2019} it is shown that for $1\le j \le k$, and for small enough $\delta_j,\eps_j$ we have
\[
I_j \le C_1 n a_n^{k-1}e^{-a_n}\param{\eps_j^{k+2-j}a_n^{k+1-j} +(\delta_j/\eps_j)^{d-k}(\delta_j a_n )^{k+1-j} + a_n^{k+1-j}e^{-C_2\delta_j a_n}},
\]
and in \cite[Equation (8.15)]{Bobrowski2019} it is shown that
\[
I_0-\mean{\xi_k(\Y)}^2 \le C_3 n a_n^{k-1}e^{-a_n}\param{\eps_0^{d+2}a_n^{k+1}+a_n^{k+1}e^{-C_4\eps_0 a_n}},
\]
where $C_1,\ldots,C_4$ are some positive constants.
The choice of $\eps_j = a_n^{-\param{1-\frac{1}{d+2-j}}}$ and $\delta_j = \frac{k+2-j}{C_2} \frac{\log a_n}{a_n}$ yields that for $1 \leq j \leq k$
\begin{align*}
I_j &  \leq C_1 n a_n^{k-1}e^{-a_n} \param{ a_n^{-\frac{d-k}{d+2-j}} +\param{\frac{k+2-j}{C_2}\log a_n}^{d-j+1}a_n^{-\frac{d-k}{d+2-j}} + a_n^{-1} } \\
& = O\param{(\log a_n)^{d-j+1}a_n^{-\frac{d-k}{d+2-j}}} = O\param{(\logg n)^{d}(\log n)^{-\frac{d-k}{d+1}}}.
\end{align*}
In addition, taking $\eps_0 = \frac{k+2}{C_4}\frac{\log a_n}{a_n}$ yields
\[
I_0-\mean{\xi_k(\Y)}^2 = O\param{a_n^{-1}} = o\param{(\logg n)^d (\log n)^{-\frac{d-k}{d+1}}}.
\]
Therefore, we have
\eqb\label{e:cp_bound_2}
\var{\xi_k(\Y)} - \mean{\xi_k(\Y)} = O\param{(\logg n)^d(\log n)^{-\frac{d-k}{d+1}}}.
\eqe
We are left now with bounding the last term in \eqref{e:crit_pt_KR}.

Fix $\bx$ and note that if  $S_{\bz} \cap S_{\bx} \ne \emptyset$, then $|c(\bz)-c(\bx)| \le \rho(\bz)+\rho(\bx) \le 2R_n$. Therefore, using  \eqref{e:defnLcrit}, we derive that
\[
\splitb
& \frac{1}{(k+1)!}\int_{\X^{k+1}}\ind\set{S_{\bz}\cap S_{\bx}\ne \emptyset} \mean{g(\bz,\eta_n+\delta_{\bz})} \bK^{k+1}(\md\bz)\\
&\quad \le \frac{1}{(k+1)!}\int_{\X^{k+1}} 
\mean{\ind\set{c(\bz)\in B_{2R_n}(c(\bx))}g(\bz,\eta_n+\delta_{\bz})} \bK^{k+1}(\md\bz)\\
&\quad= \bL(B_{2R_n}(c(\bx))\times \R_0) = \vol(B_{2R_n}(c(\bx)) \cdot \bL(\T^d\times\R_0) = \omega_d (2R_n)^d D_k e^{-\alpha_0} ,
\splite
\]
where we used the fact that $\vol(\T^d) = 1$. Finally, we can conclude that
\begin{align}
&\frac{1}{((k+1)!)^2}\int_{\X^{k+1}} \int_{\X^{k+1}}\ind\set{S_{\bz}\cap S_{\bx}\ne \emptyset} \mean{g(\bx,\eta_n+\delta_{\bx})} \mean{g(\bz,\eta_n+\delta_{\bz})} \bK^{k+1}(\md\bz) \bK^{k+1}(\md\bx) \nonumber \\
\label{e:cp_bound_3}  &\quad\le \omega_d (2R_n)^d D_k e^{-\alpha_0} \frac{1}{(k+1)!}\int_{\X^{k+1}}  \mean{g(\bz,\eta_n+\delta_{\bz})}\bK^{k+1}(\md\bx) = O(R_n^d).
\end{align}
Since $R_n = \sqrt{r_n}$, we have $R_n^d = O(\sqrt{\log n/n})$.
Combining  the bounds in   \eqref{e:cp_bound_1}-\eqref{e:cp_bound_3} completes the proof.
\end{proof}

\subsection{Large $k$-nearest neighbor balls}
\label{s:knn}
{We look at the point process of the scaled volumes of $k$-nearest neighbor balls. } We shall consider the setup as in \cite[Section 4]{Otto2020}. This is a well-studied statistic in computational and stochastic geometry with varied applications (see \cite[Pg. 342]{Penrose1997}). It is also closely related to another important object, the minimal spanning tree. We remark on this briefly in our discussion after Theorem \ref{t:knnapprox}. 

Let $\X = \T^d, d \geq 2$ as in the previous section and identify it with $[0,1]^d$, and let $\bK$ be a finite measure on $(\X,\cX)$. Define for $k\ge 1$ and $\omega \in \widehat{\bN}_{\X}$,
\[
\splitb
R_k(x,\omega) &:= \inf \{r > 0 : \omega(B_r(x) \setminus \{x\} ) {\geq} k \},\\
\bK_k(x,\omega) &:= \bK(B_{R_k(x,\omega)}(x)),
\splite
\]
where $B_r(x)$ is the {closed} ball of radius $r$ around $x$. In other words, $R_k$ is the $k$-nearest neighbor distance of $x$, and $\bK_k$ is the measure of the ball of radius $R_k$ around $x$. Let $\Y = \X\times \R$, and define
$$ \xi[\omega] := \sum_{x \in \omega} \delta_{(x, n\bK_k(x,\omega) - a_n)},$$
where
\begin{equation}
\label{e:a_n}
a_n = \log n + (k-1)\log \log n - \log (k-1)!.
\end{equation}
In other words, {for an underlying point process $\nu$, $\xi[\nu]$} is a point process of pairs, each pair consisting of a point and the scaled volume of its $k$-nearest neighbor ball. The scaling is chosen such that we count only the extremal (maximal) ones. 
\begin{thm}
\label{t:knnapprox}
Let $d \geq 2$ and $k \geq 1$. Let $\eta_n$ be a Poisson process with intensity measure $n\bK$ and assume that $\bK$ is a probability measure with a density $\lambda : \X \to (0,\infty)$ such that $0 < \lambda_- \le \lambda(x) \le \lambda_+ < \infty$ for all $x\in \X$. Let $\zeta$ be a Poisson process with intensity measure $\bM(\md x,\md t) = \lambda(x) \, \md x \, e^{-t} \, \md t$ on $\X \times \R$.  Then, for any $b_0 \in \R$, there exists a constant $C\in(0,\infty)$ only depending on $b_0$, $k$, $d$, $\lambda_+$ and $\lambda_-$ such that
\begin{equation}\label{e:knnbd}
\dkr(\xi[\eta_n] \cap (\X \times (b_0,\infty)),\zeta \cap (\X \times (b_0,\infty))) \leq C \frac{\log \log n}{\log n}
\end{equation}
for all {$n\geq 3$.} Moreover, $\xi[\eta_n] \stackrel{d}{\to} \zeta$ as $n \to \infty$.
\end{thm}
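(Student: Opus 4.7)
The plan is to apply Theorem \ref{t:qldep} with the theorem's $k$ equal to $1$ (we sum over single Poisson points; the $k$ in $R_k$, $\bK_k$, $a_n$ refers to the nearest-neighbor order and plays no role in the tuple counting). Set $\Y := \X \times (b_0,\infty)$ and define
\[
g(x,\omega) := \ind\{n\bK_k(x,\omega) - a_n > b_0\}, \qquad f(x,\omega) := (x,\, n\bK_k(x,\omega) - a_n),
\]
so that the induced process \eqref{e:defn_xi_ld} coincides with $\xi[\eta_n] \cap (\X \times (b_0,\infty))$. The natural stabilization set is $\cS(x,\omega) := B_{R_k(x,\omega)}(x)$, which is a stopping set because $\{R_k(x,\omega) \leq r\}$ depends only on $\omega \cap B_r(x)$. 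For the deterministic truncation I take $S_x := B_{r_n^*}(x)$ with $r_n^*$ chosen so that $n \lambda_- \omega_d (r_n^*)^d = c \log n$ for a fixed $c > 1$; this places $r_n^*$ slightly above the typical $k$-NN radius, making $\{R_k(x,\cdot) > r_n^*\}$ rare while keeping the pairwise interaction region $\{|x-z| \leq 2r_n^*\}$ of volume $O(\log n/n)$. Because the theorem's $k$ is $1$, the sum defining $E_4$ is empty and only $d_{TV}(\bL,\bM)$, $E_1$, $E_2$, $E_3$ need to be controlled.

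For the intensity difference, by the Mecke formula and the classical fact that $n\bK(B_{R_k(x,\eta_n+\delta_x)}(x))$ has the $\mathrm{Gamma}(k,1)$ distribution uniformly in $x$ (obtained by pushing $\eta_n$ near $x$ through $y \mapsto n\bK(B_{|y-x|}(x))$ to a unit-rate Poisson process on $(0,\infty)$), the measure $\bL$ has density $\lambda(x) e^{-t} \cdot ((t+a_n)/\log n)^{k-1}$ on $\Y$. Using \eqref{e:a_n}, the prefactor equals $1 + O(\log\log n/\log n)$ uniformly for bounded $t$ and stays integrable against $e^{-t}$ on the tail, giving $d_{TV}(\bL,\bM) = O(\log\log n/\log n)$. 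For $E_1$, a Poisson-tail bound yields $\P(R_k(x,\eta_n+\delta_x) > r_n^*) = \P(\mathrm{Poisson}(n\bK(B_{r_n^*}(x))) \leq k-1) = O((\log n)^{k-1}/n^c)$, so $E_1 = O((\log n)^{k-1}/n^{c-1})$, which is negligible. For $E_2$, each factor $\E[\tg(x,\eta_n+\delta_x)] \leq \P(T > a_n + b_0) = O(1/n)$ with $T \sim \mathrm{Gamma}(k,1)$; the constraint $S_x \cap S_z \ne \emptyset$ confines $z$ to a set of $\lambda$-measure $O(\log n/n)$, yielding $E_2 = O(\log n/n)$.

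The main obstacle is $E_3$, since the naive bound $\P(\text{both}) \leq \P(\text{one}) = O(1/n)$ combined with the interaction-region volume $O(\log n/n)$ and the factor $n^2$ only gives $O(\log n)$. The resolution is to let $r_x$ be the radius with $n\bK(B_{r_x}(x)) = a_n + b_0$ (and similarly $r_z$) and split $\{|x-z| \leq 2 r_n^*\}$ into (a) the disjoint-ball regime $|x-z| > r_x + r_z$, where Poisson spatial independence factorizes the joint event into $O(1/n^2)$ and the integrated contribution is $O(\log n/n)$; and (b) the overlapping regime, where the two sparsity events together force $\eta_n$ to be sparse in the union $B_{r_x}(x) \cup B_{r_z}(z)$, whose $n\bK$-measure $\mu$ interpolates between $a_n + b_0$ and $2(a_n+b_0)$ as the separation $u := |x-z|$ varies over $(0, r_x+r_z)$. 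Parameterizing by $u$ and using the Poisson tail bound $\P \leq C (\log n)^{2k-2} e^{-\mu}$, the further reduction when $z \in B_{r_x}(x)$ (forcing at most $k{-}2$ interior points) and an explicit estimate of the non-overlap volume as a function of $u$ (proportional to $u \cdot r_x^{d-1}$ for small $u$, a standard geometric calculation) together yield $E_3 = O(\log\log n / \log n)$. This is a refinement of the argument in \cite[Theorem 4.2]{Otto2020}. Combining all four contributions proves \eqref{e:knnbd}, and the claim $\xi[\eta_n] \stackrel{d}{\to} \zeta$ on $\X \times \R$ then follows by applying \eqref{e:knnbd} for every $b_0 \in \R$ and invoking the characterization of weak convergence of point processes via convergence on an exhausting family of bounded-below subsets, as mentioned in Remark \ref{r:critpts}(2).
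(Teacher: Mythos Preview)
Your proposal is correct and follows essentially the same route as the paper: apply Theorem~\ref{t:qldep} with $k=1$ (so $E_4$ vanishes), use $\cS(x,\omega)=B_{R_k(x,\omega)}(x)$, truncate by balls of radius slightly above the typical $k$-NN radius, compute $\bL$ via the Gamma$(k,1)$ identity to get $d_{TV}(\bL,\bM)=O(\log\log n/\log n)$, bound $E_1$ by a Poisson tail and $E_2$ by the single-point bound times the interaction volume, and treat $E_3$ by splitting according to how much the balls $B_{r_x}(x)$ and $B_{r_z}(z)$ overlap and combining the ``$k{-}2$'' reduction with the geometric lower bound $|B_r(0)\setminus B_r(v)|\ge c_d\,|v|\,r^{d-1}$. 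The only cosmetic differences are that the paper takes the $x$-dependent truncation radius $r_n(x,b)$ with $b=\log n$ rather than your uniform $r_n^*$, and splits $E_3$ at $|x-z|=\min\{r_x,r_z\}$ rather than at $r_x+r_z$; with the latter split one must still handle the intermediate range $\min\{r_x,r_z\}<|x-z|\le r_x+r_z$ (where the $k{-}2$ reduction is unavailable) by the half-volume argument, which your sketch of case~(b) does not make explicit but which your stated ingredients readily cover.
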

The constant $C$ in the theorem can be deduced from the bounds in \eqref{e:knngt1}, \eqref{e:knngt2}, \eqref{e:knngt3}, \eqref{e:kNNE31} and \eqref{e:kNNE32}. The convergence of $\xi[\eta_n]$ was obtained in \cite[Theorem 2]{Penrose1997} in the case of homogeneous Poisson processes.  Our above theorem considers inhomogeneous Poisson processes as well. Further \cite[Theorem 2]{Penrose1997} is used to prove Poisson process convergence of extremal edge lengths of the minimal spanning tree built on the complete graph on $\eta_n$ with Euclidean distances as the edge-weights (see \cite[Theorem 3]{Penrose1997}). The proof technique of \cite[Theorem 2]{Penrose1997} is to discretize and use the Chen-Stein bound of \cite{Arratia1989}.

Theorem 4.2 in \cite{Otto2020} gives bounds under total variation distance for the projection of $\xi[\eta_n]  \cap (\X \times (0,\infty))$ to $\X$. We will now show that using our Theorem \ref{t:qldep} with some additional compuations compared to  \cite[Theorem 4.2]{Otto2020},  we obtain improved rates of convergence for the more general point process $\xi[\eta_n]  \cap (\X \times (b_0,\infty))$ for any $b_0 \in \R$.
\begin{proof}
Let $b_0 \in \R$ be fixed. Set $g(x,\omega) := \ind\set{n\bK_k(x,\omega) - a_n > b_0}$ and
$f(x,\omega) := (x, n\bK_k(x,\omega) - a_n)$.  Define
$$ \xi_0[\omega] := \sum_{x \in \omega} g(x,\omega)\delta_{f(x,\omega)}, \, \, \omega \in \widehat{\bN}_{\X},$$
and observe that $\xi_0[\omega] = \xi[\omega] \cap (\X \times (b_0,\infty)).$  We put
$$r_n(x,u) := \inf \{r : n\bK(B_r(x)) \geq a_n + u \}.$$
We will apply Theorem \ref{t:qldep} with $f$ and $g$ as above to compare {$\xi_0[\eta_n]$ with the restriction of $\zeta$ to $\X\times (b_0,\infty)$.} Denoting $\cS(x,\omega) = B_{R_k(x,\omega)}(x)$, observe that {$f$ and $g$ are both localized to $\cS$,} which is a stopping set. Further, we set $S_x = B_{r_n(x,b)}(x)$ for a fixed {$b\in(\max\{0,b_0\},\infty)$,} which will be chosen later as a function of $n$.

We shall compute one-by-one the terms in the KR distance bound provided by Theorem \ref{t:qldep}. Observe that since  we are summing over points, the last term $E_4$ is absent. Throughout this proof all inequalities are supposed to hold for all $n\in\N$ such that {$a_n+b_0>0$.}

Note that for $x\in\X$, we have  $\{n\bK_k(x,\eta_n+\delta_x) - a_n >u \} = \{ \eta_n(B_{r_n(x,u)}(x)) < k \}$ ({a.s.}). Consider the first term in the bound of Theorem \ref{t:qldep}. Define $\varrho_n:=n\lambda_- \omega_d/2^d - a_n$, where $\omega_d$ is the volume of the unit ball in $\R^d$. Denote the intensity measure of $\xi_0$ by $\bL$. Then for $B \in \cX$ and $u \in (b_0,\varrho_n)$ we have by the Mecke formula that 
\begin{align*}
\bL(B \times (u,\infty)) &= n \int_{B} \prob{n\bK_k(x,\eta_n + \delta_x) - a_n > u} \lambda(x) \md x  = n  \int_{B} \prob{\eta_n(B_{r_n(x,u)}(x)) \leq k-1} \lambda(x) \md x \\
& = n \bK(B) \sum_{i=0}^{k-1}e^{-(a_n+u)}\frac{(a_n+u)^i}{i!}  =  e^{-u} \bK(B) \sum_{i=0}^{k-1} \frac{(k-1)!(a_n+u)^i}{ i! (\log n)^{k-1}},
\end{align*}
where we used \eqref{e:a_n} in the last equality. Thus, the density $\ell$ of $\bL$ satisfies
$$
\ell(x,u) = \lambda(x) e^{-u} \frac{(a_n+u)^{k-1}}{(\log n)^{k-1}}
$$
for {$x\in\X$ and} $u\in(b_0,\varrho_n)$. Let $\bM_0$ be the intensity measure of $\zeta_0 := \zeta \cap (\X \times (b_0,\infty))$. Now we obtain
\begin{equation}
\label{e:knngt1}
d_{TV}(\bL,\bM_0)\leq \int_{{b_0}}^{\infty} \big| e^{-u} \frac{(a_n+u)^{k-1}}{(\log n)^{k-1}} -e^{-u}  \big| \, \md u + \bL(\X\times (\varrho_n,\infty) ) +\bM_0(\X\times (\varrho_n,\infty) ) \leq  C_0\frac{\log \log n}{\log n}
\end{equation}
with a constant $C_0\in(0,\infty)$ only depending on $b_0$, $k$, $d$ and $\lambda_-$.
Now, we compute the integral in the second term of the bound in Theorem \ref{t:qldep}.  Similar to the above calculation,  we can derive that
\begin{align}
\label{e:knngt2} 
E_1 & \leq {2} n  \int_{\X} \prob{\eta_n(B_{r_n(x,b)}(x)) \leq k-1} \lambda(x) \md x  \leq C_1 e^{-b}\param{1 + \frac{(k-1) \log \log n +b}{\log n}}^{k-1}
\end{align}
for $b<\varrho_n$ with $C_1:=2 k!$.

Observing that $\{\cS(x,\omega) \subset S_x \} = \{R_k(x,\omega) \leq r_n(x,b) \}$ and using the definition of $g$, the integral in the third term of the bound in Theorem \ref{t:qldep} can be bounded from above by
\begin{align}
E_2 &\leq {2} n^2 \int_{\X}\int_{\X} \prob{R_k(x,\eta_n + \delta_x) \in (r_n(x,b_0),r_n(x,b)]}\prob{R_k(z,\eta_n + \delta_z) \in (r_n(z,b_0),r_n(z,b)]} \no \\ 
& \qquad \qquad \qquad \times \ind\set{|x-z| \leq r_n(x,b) + r_n(z,b)} \lambda(x)\lambda(z) \, \md x \, \md z \no  \\
& \leq {2} n^2 \int_{\X}\int_{\X} \prob{\eta_n(B_{r_n(x,b_0)}(x)) \leq k-1}\prob{\eta_n(B_{r_n(z,b_0)}(z)) \leq k-1} \no \\
& \qquad \qquad \qquad \times \ind\set{|x-z| \leq r_n(x,b) + r_n(z,b)} \lambda(x)\lambda(z) \, \md x \, \md z, \no \\
& \leq {2} \param{e^{-b_0} \sum_{i=0}^{k-1} \frac{(k-1)!(a_n+b_0)^i}{ i! (\log n)^{k-1}}}^2  \int_{\X}\int_{\X} \ind\set{|x-z| \leq r_n(x,b) + r_n(z,b)} \lambda(x)\lambda(z) \, \md x \, \md z \no \\
\label{e:knngt3} & \leq C_2 \frac{a_n + b}{n},
\end{align}
where  in the last bound we have used that $\sup_{x \in \X}\omega_dnr_n(x,b)^d \leq \lambda_-^{-1} \sup_{x \in \X} n\bK(B_{r_n(x,b)}) =\frac{a_n + b}{\lambda_-}$. Note that the constant $C_2\in(0,\infty)$ depends on $b_0$, $k$, $\lambda_+$ and $\lambda_-$. 

Similarly to the above bound, the fourth term in Theorem \ref{t:qldep} can be bounded by
\begin{align*}
E_3 &\leq {2} n^2 \int_{\X}\int_{\X} \prob{[\eta_n + \delta_z](B_{r_n(x,b_0)}(x)) \leq k-1, [\eta_n + \delta_x](B_{r_n(z,b_0)}(z)) \leq k-1} \\
 & \qquad \qquad \qquad \times \ind\set{|x-z| \leq r_n(x,b) + r_n(z,b)} \lambda(x)\lambda(z) \, \md x \, \md z \\
& \leq {2} n^2 \lambda_+^2 \int_{\X}\int_{\X} \prob{[\eta_n + \delta_z](B_{r_n(x,b_0)}(x)) \leq k-1, [\eta_n + \delta_x](B_{r_n(z,b_0)}(z)) \leq k-1} \\
 & \qquad \qquad \qquad \times \ind\set{|x-z|\leq \min\{r_n(x,b_0),r_n(z,b_0)\}} \ind\set{|x-z| \leq r_n(x,b) + r_n(z,b)} \, \md x \, \md z \\
& \quad + {2} n^2 \lambda_+^2 \int_{\X}\int_{\X} \prob{[\eta_n + \delta_z](B_{r_n(x,b_0)}(x)) \leq k-1, [\eta_n + \delta_x](B_{r_n(z,b_0)}(z)) \leq k-1} \\
 & \quad \qquad \qquad \qquad \times \ind\set{|x-z|> \min\{r_n(x,b_0),r_n(z,b_0)\}} \ind\set{|x-z| \leq r_n(x,b) + r_n(z,b)} \, \md x \, \md z \\
& =: E_{3,1}+E_{3,2}.
\end{align*}
We first consider $E_{3,1}$. Since $E_{3,1}=0$ for $k=1$, we can assume $k\geq 2$ in the sequel. We obtain that
\begin{align*}
E_{3,1} & \leq {2} n^2 \lambda_+^2 \int_{\X}\int_{\X} \prob{\eta_n (B_{r_n(x,b_0)}(x)) \leq k-2, \eta_n (B_{r_n(z,b_0)}(z)) \leq k-2} \\
 & \qquad \qquad \qquad \times \ind\set{|x-z|\leq \min\{r_n(x,b_0),r_n(z,b_0)\}} \ind\set{|x-z| \leq r_n(x,b) + r_n(z,b)} \, \md x \, \md z \\
& \leq {4} n^2 \lambda_+^2 \int_{\X}\int_{\X} \prob{\eta_n (B_{r_n(x,b_0)}(x)) \leq k-2, \eta_n (B_{r_n(z,b_0)}(z)) \leq k-2} \\
 & \qquad \qquad \qquad \times \ind\set{r_n(x,b_0) \leq r_n(z,b_0)} \ind\set{|x-z| \leq r_n(x,b) + r_n(z,b)} \, \md x \, \md z \\
& \leq {4} n^2 \lambda_+^2 \int_{\X}\int_{\X} \prob{\eta_n (B_{r_n(x,b_0)}(x)) \leq k-2} \prob{ \eta_n (B_{r_n(z,b_0)}(z)\setminus B_{r_n(x,b_0)}(x)) \leq k-2} \\
 & \qquad \qquad \qquad \times \ind\set{r_n(x,b_0) \leq r_n(z,b_0)} \ind\set{|x-z| \leq r_n(x,b) + r_n(z,b)} \, \md x \, \md z.
\end{align*}
For $x,z\in\X$ we have
$$
\prob{\eta_n (B_{r_n(x,b_0)}(x)) \leq k-2} =\sum_{i=0}^{k-2} \frac{(a_n+b_0)^i}{i!} e^{-(a_n+b_0)} \leq \frac{C_{b_0,k}}{n(a_n+b_0)}
$$
with some constant $C_{b_0,k}\in (0,\infty)$ only depending on $b_0$ and $k$ and
\begin{align*}
& \prob{ \eta_n (B_{r_n(z,b_0)}(z)\setminus B_{r_n(x,b_0)}(x)) \leq k-2} \\
& = \sum_{i=0}^{k-2} \frac{(n\bK(B_{r_n(z,b_0)}(z)\setminus B_{r_n(x,b_0)}(x)))^i}{i!} e^{-n\bK(B_{r_n(z,b_0)}(z)\setminus B_{r_n(x,b_0)}(x))} \\
& \leq C_k e^{-n\bK(B_{r_n(z,b_0)}(z)\setminus B_{r_n(x,b_0)}(x))/2}
\end{align*}
with some constant $C_k\in(0,\infty)$ only depending on $k$. 
By a short computation one can establish that there exists a dimension dependent constant $c_d\in(0,\infty)$ such that
$$
|B_1(0)\setminus B_1(v)| \geq c_d |v|
$$
for all $v\in B(0,2)$, see e.g. \cite[Equation (7.5)]{PenroseGoldstein2010}. From this and $r_n(x,b_0) \leq r_n(z,b_0)$ we deduce
\begin{align*}
\bK(B_{r_n(z,b_0)}(z)\setminus B_{r_n(x,b_0)}(x)) & \geq \lambda_{-} |B_{r_n(z,b_0)}(z)\setminus B_{r_n(x,b_0)}(x)| \\
& \geq \lambda_{-}  |B_{r_n(z,b_0)}(z)\setminus B_{r_n(z,b_0)}(x)| \geq \lambda_{-}  c_d r_n(z,b_0)^{d-1} |x-z|.
\end{align*}
Combining the previous estimates and using spherical coordinates we obtain
\begin{align*}
E_{3,1} & \leq {4} n^2 \lambda_+^2 \int_{\X}\int_{\X} \frac{C_{b_0,k} C_k}{n(a_n+b_0)} e^{-n \lambda_{-} c_d r_n(z,b_0)^{d-1} |x-z|/2}  \ind\set{|x-z| \leq r_n(x,b) + r_n(z,b)} \, \md x \, \md z \\
& \leq {4} \lambda_{+}^2C_{b_0,k}C_k\frac{n}{a_n+b_0} d\omega_d \int_{\X} \int_0^\infty e^{-n \lambda_{-} c_d r_n(z,b_0)^{d-1} s/2} s^{d-1} \, \md s \, \md z \\
& \leq {4} \lambda_{+}^2C_{b_0,k} C_k d\omega_d \frac{n}{a_n+b_0} \int_{\X} \frac{2^d}{(n \lambda_{-}c_d r_n(z,b_0)^{d-1})^d} \int_0^\infty e^{-u} u^{d-1} \, \md u  \, \md z.
\end{align*}
Since $n \lambda_{+} \omega_d r_n(z,b_0)^d\geq n\bK(B_{r_n(z,b_0)}(z)) = a_n + b_0$, we conclude that there exists a constant $C_{3,1}\in (0,\infty)$ only depending on $b_0$, $k$, $d$, $\lambda_+$ and $\lambda_-$ such that
\begin{equation}\label{e:kNNE31}
E_{3,1} \leq \frac{C_{3,1}}{(a_n+b_0)^d}.
\end{equation}
For $E_{3,2}$ we obtain
\begin{align*}
E_{3,2} & \leq {4} n^2 \lambda_+^2 \int_{\X}\int_{\X} \prob{\eta_n (B_{r_n(x,b_0)}(x)) \leq k-1, \eta_n(B_{r_n(z,b_0)}(z)) \leq k-1} \\
 & \quad \qquad \qquad \qquad \times \ind\set{|x-z|> r_n(x,b_0), r_n(z,b_0)\geq r_n(x,b_0)} \ind\set{|x-z| \leq r_n(x,b) + r_n(z,b)} \, \md x \, \md z \\
 & \leq {4} n^2 \lambda_+^2 \int_{\X}\int_{\X} \prob{\eta_n (B_{r_n(x,b_0)}(x)) \leq k-1}  \prob{\eta_n(B_{r_n(z,b_0)}(z)\setminus B_{r_n(x,b_0)}(x)) \leq k-1} \\
 & \quad \qquad \qquad \qquad \times \ind\set{|x-z|> r_n(x,b_0), r_n(z,b_0)\geq r_n(x,b_0)} \ind\set{|x-z| \leq r_n(x,b) + r_n(z,b)} \, \md x \, \md z.
\end{align*}
Let $x,z\in\X$. There exists a constant $C'_{b_0,k}\in(0,\infty)$ only depending on $b_0$ and $k$ such that
$$
\prob{\eta_n (B_{r_n(x,b_0)}(x)) \leq k-1} \leq \sum_{i=0}^{k-1} \frac{(a_n+b_0)^i}{i!} e^{-(a_n+b_0)} \leq \frac{C'_{b_0,k}}{n}.
$$
From $|x-z|> r_n(x,b_0)$ and the assumptions on $\bK$ it follows that
$$
\bK(B_{r_n(z,b_0)}(z)\setminus B_{r_n(x,b_0)}(x)) \geq \lambda_{-} |B_{r_n(z,b_0)}(z)\setminus B_{r_n(x,b_0)}(x)| \geq \frac{\lambda_{-}}{2} |B_{r_n(z,b_0)}(z)| \geq \frac{\lambda_{-}}{2\lambda_{+}} \bK(B_{r_n(z,b_0)}(z))
$$
so that
$$
n \bK(B_{r_n(z,b_0)}(z)\setminus B_{r_n(x,b_0)}(x)) \geq \gamma (a_n+b_0)
$$
with $\gamma:=\lambda_-/(2\lambda_+)$. Consequently, we can choose a constant $C''_{b_0,k}\in(0,\infty)$ only depending on $b_0$ and $k$ such that
$$
\prob{\eta_n(B_{r_n(z,b_0)}(z)\setminus B_{r_n(x,b_0)}(x)) \leq k-1} \leq C''_{b_0,k} \frac{(a_n+b_0)^{k-1}}{n^{\gamma}}.
$$
Moreover, we have $n \lambda_{-} \omega_d r_n(x,b)^d\leq n\bK(B_{r_n(x,b)}(x)) =a_n +b$ and $n \lambda_{-} \omega_d r_n(z,b)^d\leq n\bK(B_{r_n(z,b)}(z)) =a_n +b$. Combining all these estimates, we see that there exists a constant $C_{3,2}\in(0,\infty)$ only depending on $b_0$, $k$, $\lambda_+$ and $\lambda_-$ such that
\begin{equation}\label{e:kNNE32}
E_{3,2} \leq C_{3,2} n^2 \frac{1}{n} \frac{(a_n+b_0)^{k-1}}{n^{\gamma}} \frac{a_n+b}{n} = C_{3,2} \frac{(a_n+b_0)^{k-1}(a_n+b)}{n^{\gamma}}.
\end{equation}
Combining \eqref{e:knngt1}, \eqref{e:knngt2}, \eqref{e:knngt3}, \eqref{e:kNNE31} and \eqref{e:kNNE32} with Theorem \ref{t:qldep} yields that
\begin{align*}
& \dkr(\xi[\eta_n] \cap (\X \times (b_0,\infty)),\zeta \cap (\X \times (b_0,\infty))) \\
& \leq C_0\frac{\log \log n}{\log n} + C_1 e^{-b}\param{1 + \frac{\log \log n +b}{\log n}}^{k-1} + C_2 \frac{a_n + b}{n} + C_{3,1}\frac{1}{(a_n+b_0)^d} + C_{3,2}\frac{(a_n+b_0)^{k-1}(a_n+b)}{n^{\gamma}}.
\end{align*}
Now the choice $b=\log n$ proves \eqref{e:knnbd}.

Since convergence in KR distance implies convergence in distribution, we have $\xi[\eta_n] \cap (\X \times (b_0,\infty))\stackrel{d}{\to}\zeta \cap (\X \times (b_0,\infty))$ as $n\to\infty$. By the characterization of convergence in distribution of point processes in \cite[Theorem 16.16]{KallenbergFoundations}, this implies $\xi[\eta_n] \stackrel{d}{\to}\zeta $ as $n\to\infty$.
\end{proof}
%===============================================================================

In the next theorem, we consider the large $k$-nearest neighbor balls for an underlying binomial point process {under some assumptions on the density of the distribution of the points.} {For an underlying homogeneous binomial point process Poisson process convergence was established in \cite{Penrose1997}.} As mentioned before,  we are not aware of any {quantitative} result for binomial point processes even with a weaker rate of convergence or under weaker approximation distance in the literature.  {For a recent Poisson approximation result of nearest neighbor balls (i.e., $k=1$) of a binomial point process in $\R^d$ see \cite{GyoerfiHenzeWalk2019}.}
\begin{thm}
\label{t:knnapproxbin}
Let $d \geq 2$ and $k \geq 1$. Let $\beta_n$ be a binomial point process of $n$ points distributed according to a probability measure $\bK$ on $\X$ and assume that $\bK$ has a density $\lambda : \X \to (0,\infty)$ such that $0 < \lambda_- \le \lambda(x) \le \lambda_+ < \infty$ for all $x\in \X$. Let $\zeta$ be a Poisson process with intensity measure $\bM(\md x,\md t) = \lambda(x) \, \md x \, e^{-t} \, \md t$ on $\X \times \R$.  Then, for any $b_0 \in \R$, there exists a constant $C\in(0,\infty)$ only depending on $b_0$, $k$, $d$, $\lambda_+$ and $\lambda_-$ such that
\begin{equation}\label{e:knnbinbd}
\dkr(\xi[\beta_n] \cap (\X \times (b_0,\infty)),\zeta \cap (\X \times (b_0,\infty))) \leq C \frac{\log \log n}{\log n}
\end{equation}
for all $n\ge 3$. Moreover, $\xi[\beta_n] \stackrel{d}{\to} \zeta$ as $n \to \infty$.
\end{thm}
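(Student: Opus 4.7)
The plan is to invoke Theorem \ref{thm:binomial} with the same choices $g(x,\omega)=\ind\{n\bK_k(x,\omega)-a_n>b_0\}$, $f(x,\omega)=(x,n\bK_k(x,\omega)-a_n)$, stopping set $\cS(x,\omega)=B_{R_k(x,\omega)}(x)$, and localizing region $S_x=B_{r_n(x,b)}(x)$ used in the proof of Theorem \ref{t:knnapprox}, eventually optimizing at $b=\log n$. The overall structure mirrors the Poisson case: the truncated intensity measure $\bL$ of $\xi[\beta_n]\cap(\X\times(b_0,\infty))$ must be compared to its Poisson counterpart, and the six error terms $E_1,\ldots,E_6$ must all be shown to be $O(\log\log n/\log n)$ or smaller. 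The new ingredients relative to Theorem \ref{t:knnapprox} are the replacement of Poisson tails by binomial tails and the control of the three extra terms $E_4,E_5,E_6$ that arise because $\beta_n$ lacks complete spatial independence.

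For the intensity comparison, I would compute the density of $\bL$ using the binomial Mecke identity \eqref{e:IntensityMeasureBinomial}, obtaining $\lambda(x)\sum_{i=0}^{k-1}\binom{n-1}{i}p^i(1-p)^{n-1-i}$ with $p=\bK(B_{r_n(x,u)}(x))=(a_n+u)/n$. The standard estimate $|(1-p)^{n-1-i}-e^{-np}|\le C(np)^2/n$ for $np$ of logarithmic order shows that this differs from $\lambda(x)e^{-u}(a_n+u)^{k-1}/(\log n)^{k-1}$ by $O(1/n)$ uniformly in $u\in(b_0,\varrho_n)$, and the truncation argument leading to \eqref{e:knngt1} then yields a total-variation bound of order $\log\log n/\log n$. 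The terms $E_1,E_2,E_3$ are handled essentially verbatim as in the proof of Theorem \ref{t:knnapprox}, using $\mean{\tg(x,\beta_{n-1}+\delta_x)}\le\prob{\beta_{n-1}(B_{r_n(x,b_0)}(x))\le k-1}=O(1/n)$, the geometric input $\int\int\ind\{|x-z|\le r_n(x,b)+r_n(z,b)\}\,\md x\,\md z=O((a_n+b)/n)$, and the disjoint-set tail $\prob{\beta_n(A\setminus B)\le k-1}\le C\exp(-cn\bK(A\setminus B))$; each of the resulting contributions is $o(\log\log n/\log n)$ at $b=\log n$.

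The bulk of the new work lies in $E_4,E_5,E_6$. The key observations are that $n\bK(S_x)\le(a_n+b)/\lambda_-$ is only logarithmic in $n$; that under the disjointness $S_x\cap S_y=\emptyset$ one has $\bK_x(S_y)\le\bK(S_y)/(1-\bK(S_x))=O((a_n+b)/n)$ since $\bK(S_x)\to 0$ uniformly in $x$; and consequently the conditional binomial $\widetilde{\beta}_{x,m}$ can be compared to an unconditional binomial of the same size up to a factor $1+o(1)$, so that $\mean{\tg(y,\widetilde{\beta}_{x,m}+\delta_y)}=O(1/n)$ uniformly. Tracking powers and using the bound $\mean{\tg(x)\tg(y)}=O(n^{-2})$ under disjointness (which follows from the conditional independence of events localized to $S_x$ and $S_y$ given the counts $\beta_{n-2}(S_x)$ and $\beta_{n-2}(S_y)$) yields $E_4,E_5,E_6=O((a_n+b)^2/n)$. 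The main obstacle I anticipate is the bookkeeping in $E_4$, where the prefactor $(1+n\bK(S_x))$ together with the integral over $S_x$ must be combined to produce $O(a_n+b)$, and the inner $\widetilde{\beta}_x$-integrals against $\bK_x$ require the uniform comparison of $\bK_x$ with $\bK$ sketched above, applied now with two rather than one extra deterministic point inserted into the configuration. Once the six bounds are summed and $b=\log n$ is chosen, \eqref{e:knnbinbd} follows; the weak convergence $\xi[\beta_n]\stackrel{d}{\to}\zeta$ is then obtained as in the Poisson case by letting $b_0\to-\infty$ and applying \cite[Theorem 16.16]{KallenbergFoundations}.
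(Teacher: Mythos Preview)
Your proposal is correct and follows essentially the same route as the paper: invoke Theorem \ref{thm:binomial} with the same $f,g,\cS,S_x$, bound $d_{TV}(\bL,\bM_0)$ via binomial-to-Poisson tail comparison, and show $E_1,\ldots,E_6$ are all $O((a_n+b)^2/n)$ or smaller before choosing $b$ of order $\log n$ (the paper takes $b=a_n$, which is the same asymptotically). The one tactical difference worth noting is in $E_3$: you plan to redo the geometric ball-difference estimates directly with binomial tails, whereas the paper first proves the pointwise comparison $\prob{\beta_{n-2}(U_x)=i,\beta_{n-2}(U_y)=j}\le 4^k\prob{\eta_n(U_x)=i,\eta_n(U_y)=j}$ and then recycles the Poisson bounds \eqref{e:kNNE31}--\eqref{e:kNNE32} verbatim---a shortcut that saves repeating the spherical-shell integration.
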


\begin{proof}
It suffices to prove \eqref{e:knnbinbd} and the remaining statement follows as in Theorem \ref{t:knnapprox}.  The rest of the proof will be concerned about proving \eqref{e:knnbinbd}.

We define $f$, $g$, $\mathcal{S}$, $\xi_0$, $r_n$ and $\varrho_n$ exactly as in the proof of Theorem \ref{t:knnapprox}. We also define $S_x=B_{r_n(x,b)(x)}$ for a fixed $b\in(\max\{0,b_0\},a_n {]}$, which will be chosen later. Our goal is to apply Theorem \ref{thm:binomial}. 

Throughout this proof we use several times the observation that
\begin{equation}\label{e:ApproximationExponential}
0 \leq e^{-y} - (1-y/n)^n \leq e^{-y} \frac{y^2}{n}
\end{equation}
for $y\in\R$ and $n\in\N$ such that $y/n\leq 1/2$. 
From now on we assume that $n$ is sufficiently large so that
\begin{equation}\label{e:Assumption_a_n}
0 \leq \frac{a_n+b_0}{n} \leq \frac{1}{4} \quad \text{and} \quad 0\leq \frac{2 a_n}{n}\leq \frac{1}{4}. %\quad \text{and} \quad a_n\geq b.
\end{equation}
We denote by $\bL$ the intensity measure of $\xi[\beta_n]$. For $B\in\mathcal{X}$ and $u\in(b_0,\min\{\varrho_n,a_n\})$ we have
\begin{align*}
& \bL(B\times(u,\infty)) = n \int_B  \prob{ \beta_{n-1}(B_{r_n(x,u)}(x))\leq k-1 } \lambda(x) \md x \\
& = n \bK(B) \sum_{i=0}^{k-1} \binom{n-1}{i} \frac{(a_n+u)^i}{n^i} \bigg( 1 - \frac{a_n+u}{n} \bigg)^{n-1-i} = n \bK(B) \sum_{i=0}^{k-1} \frac{(n-1)_i}{n^i}  \frac{(a_n+u)^i}{i!} \bigg( 1 - \frac{a_n+u}{n} \bigg)^{n-1-i} .
\end{align*}
Thus $\bL$ has the density
\begin{align*}
\ell(x,u) & = -n \lambda(x)\sum_{i=1}^{k-1} \frac{(n-1)_i}{n^i}  \frac{(a_n+u)^{i-1}}{(i-1)!} \bigg( 1 - \frac{a_n+u}{n} \bigg)^{n-1-i} \\ & \quad + n \lambda(x) \sum_{i=0}^{k-1} \frac{(n-1)_i}{n^i} \frac{n-1-i}{n} \frac{(a_n+u)^i}{i!} \bigg( 1 - \frac{a_n+u}{n} \bigg)^{n-2-i} \\
& =: \lambda(x) q(u) 
\end{align*}
for $x\in\mathbb{X}$ and $u\in(b_0,\min\{\varrho_n,a_n\})$, whence 
\begin{align*}
| q(u) - e^{-u}| & \leq n \sum_{i=1}^{k-1}  \frac{(a_n+u)^{i-1}}{(i-1)!} \bigg( 1 - \frac{a_n+u}{n} \bigg)^{n-1-i} 
 + n \sum_{i=0}^{k-2} \frac{(a_n+u)^i}{i!} \bigg( 1 - \frac{a_n+u}{n} \bigg)^{n-2-i} \\
& \quad + \bigg|n \frac{(n-1)_{k-1}}{n^{k-1}} \frac{n-k}{n} \frac{(a_n+u)^{k-1}}{(k-1)!} \bigg( 1 - \frac{a_n+u}{n} \bigg)^{n-k-1} - e^{-u} \bigg| \\
& =: R_1+R_2+R_3.
\end{align*}
From \eqref{e:ApproximationExponential} and \eqref{e:Assumption_a_n} we deduce that
$$
n\bigg( 1 - \frac{a_n+u}{n} \bigg)^{n-k-1} \leq \frac{4^{k+1}}{3^{k+1}} n e^{-(a_n+u)}= \frac{4^{k+1} (k-1)!}{3^{k+1}} \frac{e^{-u}}{(\log n)^{k-1}}.
$$
Together with $|a_n+u|^j\leq 2^{j-1} (|a_n|^j + |u|^j)$ and $\sup_{v\in (b_0,\infty)} |v|^j e^{-v/2}<\infty$ for $j\in\mathbb{N}$ this yields
$$
R_1 \leq \frac{c_1 e^{-u/2}}{\log n} \quad \text{and} \quad R_2 \leq \frac{c_2 e^{-u/2}}{\log n}, 
$$
where $c_1,c_2\in(0,\infty)$ depend only on $b_0$ and $k$. For $R_3$ we have
\begin{align*}
R_3 & \leq \bigg| \frac{(n-1)_{k-1}}{n^{k-1}} \frac{n-k}{n} - 1 \bigg| \frac{n (a_n+u)^{k-1}}{(k-1)!} \bigg( 1 - \frac{a_n+u}{n} \bigg)^{n-k-1} \\
& \quad + \bigg| \frac{n (a_n+u)^{k-1}}{(k-1)!} - \frac{n a_n^{k-1}}{(k-1)!} \bigg|  \bigg( 1 - \frac{a_n+u}{n} \bigg)^{n-k-1} \\
& \quad + \bigg| 1 - \bigg( 1 - \frac{a_n+u}{n} \bigg)^{k+1} \bigg| \frac{n a_n^{k-1}}{(k-1)!} \bigg( 1 - \frac{a_n+u}{n} \bigg)^{n-k-1} + \bigg| \frac{n a_n^{k-1}}{(k-1)!} \bigg( 1 - \frac{a_n+u}{n} \bigg)^{n} - e^{-u} \bigg| .
\end{align*}
By the same arguments as used for $R_1$ and $R_2$ and
$$
\bigg| \frac{n a_n^{k-1}}{(k-1)!} \bigg( 1 - \frac{a_n+u}{n} \bigg)^{n} - e^{-u} \bigg| \leq \frac{n a_n^{k-1}}{(k-1)!} e^{-a_n-u} \frac{(a_n+u)^2}{n} + \bigg| \frac{n a_n^{k-1}}{(k-1)!} e^{-a_n-u} - e^{-u} \bigg|,
$$
which follows from \eqref{e:ApproximationExponential}, we obtain
$$
R_3 \leq \frac{c_3 e^{-u/2}}{n} + \frac{c_4 e^{-u/2}}{\log n} + \frac{c_5 e^{-u/2} a_n}{n} + \frac{c_6 e^{-u} a_n^2}{n} + \frac{c_7 e^{-u} \log\log n}{\log n}
$$
with constants $c_3,c_4,c_5,c_6,c_7\in(0,\infty)$ depending only on $b_0$ and $k$. Let $R(u)$ denote the sum of the upper bounds for $R_1$, $R_2$ and $R_3$ and let $\bM_0$ be the intensity measure of $\zeta\cap (\X \times (b_0,\infty))$. The estimates above imply that
\begin{equation}
\label{e:dtvlmb}
d_{TV}(\bL, \bM_0) \leq \int_{b_0}^\infty R(u) \, du + \bL( { \X\times } (\min\{\varrho_n,a_n\},\infty)) + \bM_0( { \X\times } (\min\{\varrho_n,a_n\},\infty)) \leq C_0' \frac{\log\log n}{\log n}
\end{equation}
with a constant $C_0'\in(0,\infty)$ depending on $b_0$, $k$, $d$ and $\lambda_-$.

In the following we compute the terms $E_1,\hdots,E_6$ from Theorem \ref{thm:binomial}. Using the above expression for $\bL$, we obtain that
\begin{align}
\label{e:E1bin} 
E_1 & \leq  2n \int_B  \prob{ \beta_{n-1}(B_{r_n(x,b)}(x))\leq k-1 } \lambda(x) \md x  \leq C_1' e^{-b}\param{1 + \frac{(k-1)\log \log n +b}{\log n}}^{k-1}
\end{align}
with $C_1':=2k! \big(\frac{4}{3}\big)^{k+1}$.
For $n\in\N$ and $x\in\mathbb{X}$ define $U_{x}=B_{r_n(x,b_0)}(x)$. It follows from \eqref{e:ApproximationExponential} {and \eqref{e:Assumption_a_n}} that
\begin{equation}\label{e:gbetan-1}
\begin{split}
\mean{g(x,\beta_{n-1}+\delta_x)} & \leq \prob{\beta_{n-1}(U_x)\leq k-1} 
= \sum_{i=0}^{k-1} \binom{n-1}{i} \bigg(\frac{a_n+b_0}{n} \bigg)^i \bigg(1-\frac{a_n+b_0}{n} \bigg)^{n-1-i} \\
& \leq \frac{4^k}{3^k} \sum_{i=0}^{k-1}  \frac{(a_n+b_0)^i}{i!} e^{-(a_n+b_0)} \leq \frac{\tilde{c}_1}{n}
\end{split}
\end{equation}
with a constant $\tilde{c}_1\in(0,\infty)$ depending only on $b_0$ and $k$. For $x,y,y_1,y_2\in\mathbb{X}$ {with $S_x\cap U_{y_1}=\emptyset$} we obtain analogously that there exists a constant $\tilde{c}_2\in(0,\infty)$ depending only on $b_0$ and $k$ such that
\begin{align}\label{e:gbetan-2}
\mean{g(x,\beta_{n-2}+\delta_x+\delta_y)} & \leq \mean{g(x,\beta_{n-2}+\delta_x)} \leq \prob{\beta_{n-2}(U_x)\leq k-1} \leq  \frac{\tilde{c}_2}{n},\\
 \label{e:gtildebetan-2}
\mean{g(y_1,\widetilde{\beta}_{x,n-2}+\delta_{y_1}+\delta_{y_2})} & \leq \prob{\widetilde{\beta}_{x,n-2}(U_{y_1})\leq k-1} \leq \prob{\beta_{n-2}(U_{y_1})\leq k-1} \leq \frac{\tilde{c}_2}{n}
\end{align}
and
\begin{equation} \label{e:gtildebetan-1}
{\mean{g(y_1,\widetilde{\beta}_{x,n-1}+\delta_{y_1})} \leq \prob{\widetilde{\beta}_{x,n-1}(U_{y_1})\leq k-1} \leq \prob{\widetilde{\beta}_{x,n-2}(U_{y_1})\leq k-1} \leq \frac{\tilde{c}_2}{n}.}
\end{equation}
For $x,y\in\mathbb{X}$ and $i,j\in\{0,1,\hdots,k-1\}$ we have
\begin{align*}
& \prob{\beta_{n-2}(U_{x})=i, \beta_{n-2}(U_{y})=j} \\
& = \sum_{s=0}^{\min\{i,j\}} \mathbb{P}\big(\beta_{n-2}(U_{x}\cap U_{y}^c)=i-s, \beta_{n-2}(U_{x}^c\cap U_{y})=j-s, \beta_{n-2}(U_{x}\cap U_{y})=s \big) \\
& = \sum_{s=0}^{\min\{i,j\}} \frac{(n-2)_{i+j-s}}{(i-s)! (j-s)! s!} \bK(U_{x}\cap U_{y}^c)^{i-s} \bK(U_{x}^c\cap U_{y})^{j-s} \bK(U_{x}\cap U_{y})^{s} \big( 1- \bK(U_{x}\cup U_{y}) \big)^{n-2-i-j+s} \\
& \leq 4^k \sum_{s=0}^{\min\{i,j\}} \frac{(n\bK(U_{x}\cap U_{y}^c))^{i-s}}{(i-s)!} \frac{(n\bK(U_{x}^c\cap U_{y}))^{j-s}}{(j-s)!} \frac{(n\bK(U_{x}\cap U_{y}))^{s}}{s!} \exp\big(-n\bK(U_{x}\cup U_{y}) \big) \\
& = 4^k \sum_{s=0}^{\min\{i,j\}} \mathbb{P}\big(\eta_{n}(U_{x}\cap U_{y}^c)=i-s, \eta_{n}(U_{x}^c\cap U_{y})=j-s, \eta_{n}(U_{x}\cap U_{y})=s \big) \\
& = 4^k \prob{\eta_{n}(U_{x})=i, \eta_{n}(U_{y})=j},
\end{align*}
where the inequality follows from \eqref{e:ApproximationExponential} and \eqref{e:Assumption_a_n}.
This implies that
\begin{equation}\label{e:Productgs}
\begin{split}
\mean{g(x,\beta_{n-2}+\delta_x+\delta_y) g(y,\beta_{n-2}+\delta_x+\delta_y)}
& \leq \prob{[\beta_{n-2}+\delta_y](U_{x})\leq k-1, [\beta_{n-2}+\delta_x](U_{y})\leq k-1} \\
& \leq 4^k \prob{[\eta_{n}+\delta_y](U_{x})\leq k-1, [\eta_{n}+\delta_x](U_{y})\leq k-1}
\end{split}
\end{equation}
and if $S_{x}\cap S_{y}=\emptyset$,
\begin{equation}\label{e:gSquareDisjoint}
\begin{split}
& \mean{g(x,\beta_{n-2}+\delta_x) g(y,\beta_{n-2}+\delta_y)} %\\
%&
 = \mean{g(x,\beta_{n-2}+\delta_x+\delta_y) g(y,\beta_{n-2}+\delta_x+\delta_y)} \\
& \leq 4^k \prob{(\eta_{n}+\delta_y)(U_{x})\leq k-1, (\eta_{n}+\delta_x)(U_{y})\leq k-1}%\\
%&
 \leq 4^k \prob{\eta_{n}(U_{x})\leq k-1, \eta_{n}(U_{y})\leq k-1} \\
&
= 4^k \prob{\eta_{n}(U_{x})\leq k-1} \prob{\eta_{n}(U_{y})\leq k-1} \leq \frac{\tilde{c}_3}{n^2}
\end{split}
\end{equation}
with a constant $\tilde{c}_3\in(0,\infty)$ depending only on $b_0$ and $k$. {A similar computation shows that there exists a constant $\tilde{c}_4\in(0,\infty)$ depending only on $b_0$ and $k$ such that 
\begin{equation}\label{eqn:gSquareDisjointn-3}
\mean{g(x,\beta_{n-3}+\delta_x+\delta_z) g(y,\beta_{n-3}+\delta_y)} \leq \mean{g(x,\beta_{n-3}+\delta_x) g(y,\beta_{n-3}+\delta_y)} \leq \frac{\tilde{c}_4}{n^2}
\end{equation}
for $x,y,z\in \X$ with $S_x\cap S_y = \emptyset$. In the sequel, we will use that $\tilde{g}\leq g$.}
It follows from \eqref{e:gbetan-1} and
\begin{align*}
& \ind\{S_{x}\cap S_{y}\neq \emptyset\}\leq \ind\{|x-y|\leq r_n(x,b)+r_n(y,b)\} \leq \ind\{|x-y|\leq 2r_n(x,b)\} + \ind\{|x-y|\leq 2r_n(y,b)\}
\end{align*}
that
\begin{align}
\label{e:E2b} E_2 & \leq 2 \tilde{c}_1^2 \int_{\mathbb{X}^2} \ind\{S_{x}\cap S_{y}\neq \emptyset\} \bK^2(\md (x,y)) \leq 4 \tilde{c}_1^2 \int_{\mathbb{X}} \bK(B_{2r_n(x,b)}(x)) \bK(\md x) 
  \leq 4 \tilde{c}_1^2 2^d \frac{\lambda_+}{\lambda_-} \frac{a_n+b}{n}.
\end{align}
From \eqref{e:Productgs} and the same arguments as in the Poisson case (see \eqref{e:kNNE31} and \eqref{e:kNNE32}) we obtain that
\begin{align}
E_3 & \leq 2\cdot 4^k n^2 \int_{\mathbb{X}^2} \ind\{S_{x}\cap S_{y} \neq \emptyset\} \prob{[\eta_{n}+\delta_y](U_{x})\leq k-1, [\eta_{n}+\delta_x](U_{y})\leq k-1} \bK^2(\md(x,y)) \no \\
\label{e:E3b} & \leq C_{3,1}'\frac{1}{(a_n+b_0)^d} + C_{3,2}' \frac{(a_n+b_0)^{k-1}(a_n+b)}{n^{\gamma}}
\end{align}
with constants $C_{3,1}',C_{3,2}',\gamma\in(0,\infty)$ only depending on $b_0$, $k$, $d$, $\lambda_+$ and $\lambda_-$.
Because of \eqref{e:gbetan-1} and \eqref{e:gbetan-2} we have, for $x\in\mathbb{X}$,
\begin{align*}
& (1+n\bK(S_{x})) \mean{g(x,\beta_{n-1}+\delta_x)} + n \int_{S_{x}} \mean{g(x,\beta_{n-2}+\delta_x+\delta_z)} \bK(\md z) \leq \frac{\tilde{c}_1 (1+a_n+ b)}{n} + \frac{\tilde{c}_2 (a_n + b)}{n}.
\end{align*}
From $b\leq a_n$ and \eqref{e:Assumption_a_n} we derive for all $x\in\X$ that $\bK(S_x)\leq \frac{2a_n}{n}\leq\frac{1}{4}$, whence for $\mathbf{Q}=\bK$ the measure $\mathbf{Q}_x$ in Theorem \ref{thm:binomial} is dominated by $\frac{4}{3}\bK$. Together with \eqref{e:gtildebetan-2} and \eqref{e:gtildebetan-1} this implies that
\begin{align}
E_4 & \leq 2n\bigg( \frac{\tilde{c}_1 (1+a_n+ b)}{n} + \frac{\tilde{c}_2 (a_n + b)}{n} \bigg) \bigg( \frac{4}{3} \frac{\tilde{c}_2}{n} + \frac{16}{9} n \frac{\tilde{c}_2}{n} \int_{\mathbb{X}} \bK(S_{y_1}) \bK(\md y_1) \bigg)\no  \\
\label{e:E4b} & \leq 2\bigg( \tilde{c}_1(1+a_n+ b) + \tilde{c}_2 (a_n + b) \bigg)  \bigg( \frac{4}{3} \frac{\tilde{c}_2}{n} + \frac{16}{9} \tilde{c}_2 \frac{a_n + b}{n} \bigg).
\end{align}
By \eqref{e:gbetan-2} we have
\begin{align}
\label{e:E5b}
E_5 & \leq \frac{8}{3} (a_n + b)^2 n \frac{\tilde{c}_2^2}{n^2} = 2 \tilde{c}_2^2 \frac{(a_n + b)^2}{n}.
\end{align}
It follows from \eqref{e:gSquareDisjoint} {and \eqref{eqn:gSquareDisjointn-3}} that
\begin{align}
\label{e:E6b}
E_6 & {\leq 2n^2 \frac{\tilde{c}_3}{n^2}  \frac{4(a_n + b)}{3n} + 2n^3 \frac{\tilde{c}_4}{n^2}  \frac{4(a_n + b)^2}{3n^2}  = \frac{8}{3} \big( \tilde{c}_3 + \tilde{c}_4 (a_n + b) \big) \frac{a_n + b}{n}.}
\end{align}
Choosing $b=a_n$ and combining the bounds \eqref{e:dtvlmb},  \eqref{e:E1bin}, \eqref{e:E2b},  \eqref{e:E3b},  \eqref{e:E4b},  \eqref{e:E5b} and \eqref{e:E6b} with Theorem \ref{thm:binomial} proves \eqref{e:knnbinbd}.
\end{proof}

\section*{Acknowledgements}
OB was supported in part by the Israel Science Foundation, Grant 1965/19.  MS was supported by the Swiss National Science Foundation Grant No.\ 200021\_175584. DY's research was funded by DST-INSPIRE Faculty Award,  SERB-MATRICS Grant and CPDA from the Indian Statistical Institute.  DY wishes to thank Moritz Otto for explaining his results in advance.   

%\bibliographystyle{plain}
%\bibliography{../../Latex/zotero}

%===============================================================================

\end{document}